% -------------------------------------------------------------
% AMS-LaTeX Paper ************************************************
% **** -----------------------------------------------------------
% Update second author's contact information.
%
\documentclass{amsart}
\usepackage{graphicx}
\usepackage{url}
\usepackage{graphicx}
\usepackage{color}
\usepackage{times}
\usepackage{cite}
\usepackage{enumerate,latexsym}
\usepackage{latexsym}
\usepackage{amsmath,amssymb}% CUIDADO: Si uso estos packages, $\widetilde{}$ no funciona a veces
\usepackage{graphicx}
%Commented this out as I find it annoying -J
%\usepackage{showkeys} 
\usepackage{amsthm}
\usepackage{verbatim}
% ----------------------------------------------------------------
\vfuzz2pt % Don't report over-full v-boxes if over-edge is small
\hfuzz2pt % Don't report over-full h-boxes if over-edge is small
% THEOREMS -------------------------------------------------------
\newtheorem{thm}{Theorem}[section]
\newtheorem*{theorem*}{Theorem}
\newtheorem*{acknowledgement*}{Acknowledgements}
\newtheorem{cor}[thm]{Corollary}

\newtheorem{lem}[thm]{Lemma}
\newtheorem{prop}[thm]{Proposition}
\theoremstyle{definition}

\theoremstyle{remark}

\numberwithin{equation}{section}
% MATH -----------------------------------------------------------
\newcommand{\norm}[1]{\left\Vert#1\right\Vert}
\newcommand{\abs}[1]{\left\vert#1\right\vert}
\newcommand{\set}[1]{\left\{#1\right\}}
\newcommand{\Real}{\mathbb R}

\newcommand{\xX}[0]{\mathbf{x}}

\newcommand{\yY}[0]{\mathbf{y}}

\newcommand{\vV}[0]{\mathbf{v}}
\newcommand{\eE}[0]{\mathbf{e}}
\newcommand{\nN}[0]{\mathbf{n}}

\newcommand{\OO}{\mathbf{0}}

\title[Topology of Asymptotically Conical Self-Shrinkers]{A Topological Property of Asymptotically Conical Self-Shrinkers of Small Entropy}
\author{Jacob Bernstein}
\address{Department of Mathematics, Johns Hopkins University, 3400 N. Charles Street, Baltimore, MD 21218}
\email{bernstein@math.jhu.edu}
\author{Lu Wang}
\address{Department of Mathematics, University of Wisconsin-Madison, 480 Lincoln Drive, Madison, WI 53706}
\email{luwang@math.wisc.edu}
\thanks{The first author was partially supported by the NSF Grant DMS-1307953. The second author was partially supported by the Chapman Fellowship of the Imperial College London and by the AMS-Simons Travel Grant 2012-2014 and the NSF Grant DMS-1406240}
\begin{document}
\begin{abstract}
For any asymptotically conical self-shrinker with entropy less than or equal to that of a cylinder we show that the link of the asymptotic cone must separate the unit sphere into exactly two connected components, both diffeomorphic to the self-shrinker. Combining this with recent work of Brendle, we conclude that the round sphere uniquely minimizes the entropy among all non-flat two-dimensional self-shrinkers. This confirms a conjecture of Colding-Ilmanen-Minicozzi-White in dimension two.
\end{abstract}
\maketitle

\section{Introduction}
A hypersurface $\Sigma\subset \Real^{n+1}$ is said to be a \emph{self-shrinker} if it satisfies
\begin{equation} \label{SelfShrinkerEqn}
\mathbf{H}_\Sigma+\frac{\xX^\perp}{2}=\OO.
\end{equation}
Here $\mathbf{H}_\Sigma=-H_{\Sigma} \nN_\Sigma=\Delta_\Sigma \xX$ is the mean curvature vector of $\Sigma$ and $\xX^\perp$ is the normal component of the position vector. Self-shrinkers arise naturally in the mean curvature flow as the time slices of solutions that move self-similarly by scaling. Specifically, if $\Sigma$ is a self-shrinker, then
\begin{equation}
\set{\Sigma_t}_{t\in(-\infty,0)}=\set{\sqrt{-t}\, \Sigma}_{t\in(-\infty,0)}
\end{equation}
is a smooth mean curvature flow. Such self-similar flows are important as they model singularities of the mean curvature flow. From a variational point of view, self-shrinkers arise as critical points of the \emph{Gaussian (hyper)-surface area}
\begin{equation}
F[\Sigma]=\int_{\Sigma} \Phi(\xX) d\mathcal{H}^n = (4\pi)^{-\frac{n}{2}} \int_{\Sigma} e^{-\frac{|\xX|^2}{4}} d\mathcal{H}^n.
\end{equation}
Here $\mathcal{H}^n$ is $n$-dimensional Hausdorff measure and $\Phi$ is the Gaussian normalized so that $F[\Real^n\times \set{0}]=1$. Following Colding-Minicozzi \cite{CM}, the \emph{entropy} of a hypersurface is defined by
\begin{equation}
\lambda[\Sigma]=\sup_{(\yY,\rho)\in\Real^{n+1}\times\Real^+} F[\rho\Sigma+\yY].
\end{equation}
The Gaussian surface area and the entropy of self-shrinkers agree and so $\lambda[\Real^n\times\set{0}]=1$.

A hypersurface, $\Sigma$, is \emph{asymptotically conical}, if it is smoothly asymptotic to a regular cone; i.e., $\lim_{\rho\to 0} \rho\Sigma= \mathcal{C} (\Sigma)$ in $C^{\infty}_{loc}(\Real^{n+1}\setminus\set{\OO})$ for $\mathcal{C} (\Sigma)$ a regular cone. Our main result is a topological restriction on asymptotically conical self-shrinkers with small entropy:

\begin{thm} \label{MainACSThm}
Let $\Sigma$ be an asymptotically conical self-shrinker in $\Real^{n+1}$ for $n\geq 2$. If $\lambda[\Sigma]\leq\lambda[\mathbb{S}^{n-1}_*]$, then $\mathcal{L} (\Sigma)$, the link of the asymptotic cone $\mathcal{C} (\Sigma)$, separates $\mathbb{S}^{n}$ into two connected components both diffeomorphic to $\Sigma$. As a consequence, $\mathcal{L} (\Sigma)$ is connected.
\end{thm}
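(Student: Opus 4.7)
The strategy is to realize $\Sigma$ as the time $t=-1$ slice of the smooth ancient mean curvature flow $\set{\Sigma_t = \sqrt{-t}\,\Sigma}_{t<0}$, which converges in $C^\infty_{loc}(\Real^{n+1}\setminus\set{\OO})$ to $\mathcal{C}(\Sigma)$ as $t \to 0^-$, and to continue this flow past $t = 0$ by self-expanders asymptotic to $\mathcal{C}(\Sigma)$. Since entropy is lower semi-continuous under blow-down, $\lambda[\mathcal{C}(\Sigma)] \leq \lambda[\Sigma] \leq \lambda[\mathbb{S}^{n-1}_*]$. Invoking the authors' prior work on asymptotically conical self-expanders of low entropy, on each connected component $V$ of $\Real^{n+1} \setminus \mathcal{C}(\Sigma)$ there is a smooth self-expander $E_V \subset V$ asymptotic to $\mathcal{C}(\Sigma)$, and an asymptotic-analysis argument identifies $E_V$ diffeomorphically with the corresponding component $U_V \subset \mathbb{S}^{n}\setminus\mathcal{L}(\Sigma)$.

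Next, I would show that $\Sigma$ is connected: if $\Sigma = \Sigma_1 \sqcup \Sigma_2$, each $\Sigma_i$ is itself a shrinker, so the Colding-Minicozzi identity $\lambda[\Sigma_i] = F[\Sigma_i]$ together with the universal lower bound $F[\Sigma_i] \geq 1$ gives $\lambda[\Sigma] \geq F[\Sigma_1] + F[\Sigma_2] \geq 2 > \lambda[\mathbb{S}^{n-1}_*]$, a contradiction. Connectedness of $\Sigma$ forces $\Real^{n+1}\setminus\Sigma$ to have exactly two components; passing to the limit of the shrinking flow then forces $\Real^{n+1}\setminus\mathcal{C}(\Sigma)$ to have exactly two components as well, so $\mathcal{L}(\Sigma)$ is connected and separates $\mathbb{S}^n$ into precisely two pieces $U_\pm$. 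Finally, gluing $\set{\Sigma_t}_{t<0}$ to $\set{\sqrt{t}\,E_\pm}_{t>0}$ at the cone produces a weak mean curvature flow with a single spacetime singularity at $(\OO, 0)$; away from this singularity the flow is smooth and yields an ambient isotopy transporting the diffeomorphism type of the time-slices across $t=0$. Since $E_\pm \cong U_\pm$, one concludes $\Sigma \cong U_\pm$, completing the proof.

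The principal obstacle is constructing the isotopy across the singular time: each $\Sigma_t$ for $t<0$ is a single connected hypersurface, while for $t>0$ the time slice splits into one smooth piece on each side of the cone, so a genuine topological change occurs at $t=0$. Resolving this requires careful use of the self-similar structure on both the shrinker and expander sides to track how the ``cap'' of $\Sigma$ near the origin is divided between the two expanders as one crosses $t=0$, and this is the technical heart of the argument.
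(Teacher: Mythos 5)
Your proposal has a genuine gap at exactly the step you flag as the ``technical heart,'' and that gap is fatal to the approach as described. You run the self-similar flow $\set{\sqrt{-t}\,\Sigma}_{t<0}$ into its conical singularity at $(\OO,0)$ and propose to continue by self-expanders $E_\pm$ asymptotic to $\mathcal{C}(\Sigma)$, then ``transport the diffeomorphism type of the time-slices across $t=0$'' by an ambient isotopy since the flow is smooth away from one spacetime point. But smoothness away from a single singular point does not preserve diffeomorphism type across the singular time -- indeed, as you yourself observe, the slices are connected and diffeomorphic to $\Sigma$ for $t<0$ and split into two pieces for $t>0$, so no such isotopy across $t=0$ exists, and no mechanism is offered to relate $\Sigma$ to $E_\pm$ (or to $U_\pm$). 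In addition, the inputs you invoke are themselves unsupported here: the existence, smoothness and uniqueness of a (weak) flow coming out of the cone given by expanders of small entropy, the non-fattening of the level set flow, and the identification $E_\pm\cong U_\pm$ (which would itself require a star-shapedness-type argument) are all substantial results not established in or available to this paper. So the proof does not close.

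The paper avoids the singular time altogether, and this is the key idea you are missing: instead of flowing $\Sigma$ itself, one perturbs $\Sigma$ to either side by a small multiple of the positive lowest eigenfunction of $L_\Sigma$ (Proposition \ref{PerturbedInitialSurfaceProp}), which \emph{strictly} decreases the entropy below $\lambda_{n-1}$, preserves the asymptotic cone, and makes the surfaces shrinker mean convex. The resulting flows $\set{\Gamma^{\pm\epsilon}_t}$ stay smooth for all time -- a singularity would force a tangent flow of Gaussian density at least $\lambda_{n-1}$ (Proposition \ref{SingularProp}), contradicting the strict entropy drop -- remain asymptotic to $\mathcal{C}(\Sigma)$, stay disjoint from $\sqrt{-t}\,\Sigma$, and at $t=0$ satisfy $\xX\cdot\nN>0$ (Theorem \ref{StarshapThm}). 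Radial projection then gives a diffeomorphism of $\Gamma^{\pm\epsilon}_0$ onto the two components of $\mathbb{S}^n\setminus\mathcal{L}(\Sigma)$ (Proposition \ref{RadialGraphTopProp}), and since $\Gamma^{\pm\epsilon}_{-1}$ is a normal graph over $\Sigma$ and the flow on $[-1,0]$ is smooth, each component is diffeomorphic to $\Sigma$; connectedness of $\mathcal{L}(\Sigma)$ follows from Mayer--Vietoris. One minor positive point: your connectedness argument for $\Sigma$ (additivity of $F$ over components plus $\lambda\geq 1$ for each component, giving $\lambda[\Sigma]\geq 2>\lambda_{n-1}$) is a legitimate alternative to the paper's barrier/stability argument, but it does not rescue the main construction.
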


Here $\mathbb{S}^n\subset \Real^{n+1}$ is the unit sphere centered at the origin and, for $0\leq k\leq n$,
\begin{equation}
\mathbb{S}^{n-k}_{*}\times\Real^k=\set{(\xX,\yY)\in\Real^{n-k+1}\times\Real^k=\Real^{n+1}: |\xX|^2=2(n-k)}
\end{equation}
are the maximally symmetric self-shrinking cylinders with $k$-dimensional spine\footnote{In $\mathbb{R}^{n+1}$ a round sphere is a self-shrinker if and only if it is centered at the origin and its radius is $\sqrt{2n}$. In our notation, the radii of $\mathbb{S}^n_*$ and $\mathbb{S}^n$ are, respectively, $\sqrt{2n}$ and $1$. Thus, $\mathbb{S}^n_*$ is a self-shrinker while $\mathbb{S}^n$ is not.}. As the $\mathbb{S}^{n-k}_{*}\times\Real^k$ are self-shrinkers, their Gaussian surface area and entropy agree.  That is,
\begin{equation}
\lambda_n=\lambda [\mathbb{S}^n]=F[\mathbb{S}_*^n]=F[\mathbb{S}_*^n\times \Real^l].
\end{equation}
By a computation of Stone \cite{Stone},
\begin{equation}
2>\lambda_1>\frac{3}{2}>\lambda_2>\ldots>\lambda_n>\ldots\to\sqrt{2}.
\end{equation}

When $n=2$, Theorem \ref{MainACSThm} implies that $\Sigma$ is diffeomorphic to an open disk. This allows us to completely classify self-shrinkers in $\Real^3$ of small entropy.  Indeed, combining the work of Colding-Ilmanen-Minicozzi-White \cite{CIMW} (see also \cite{BernsteinWang}) and Brendle \cite{Brendle} with Theorem \ref{MainACSThm}, we conclude:

\begin{cor}\label{ShrinkerClassificationCor}
If $\Sigma\subset\Real^{3}$ is a self-shrinker with $\lambda[\Sigma]\leq\lambda_1$, then $\Sigma$ is either $\mathbb{S}^2_*$, or some rotation of  $\Real^2\times\set{0}$ or $\mathbb{S}^1_*\times\Real$. In fact, there is a $\delta_0>0$ so that these are the only self-shrinkers with entropy less than $\lambda_1+\delta_0$.
\end{cor}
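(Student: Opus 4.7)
The plan is to analyze $\Sigma$ case by case according to its behavior at infinity, combining Theorem~\ref{MainACSThm} with Brendle's classification of embedded genus-zero self-shrinkers in $\Real^3$ \cite{Brendle} and the entropy lower bound of Colding--Ilmanen--Minicozzi--White \cite{CIMW} for closed self-shrinkers (as extended in \cite{BernsteinWang}); the $\delta_0$-gap is obtained separately by a compactness-plus-rigidity argument. Since $\lambda[\Sigma]<\infty$, $\Sigma$ has polynomial area growth; if $\Sigma$ is closed, the main result of \cite{CIMW} together with \cite{BernsteinWang} forces $\Sigma=\mathbb{S}^2_*$ directly from $\lambda[\Sigma]\le\lambda_1$. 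Otherwise $\Sigma$ is non-compact, and a standard end analysis (relying on polynomial area growth and the tangent-flow-at-infinity analysis) shows that every end of $\Sigma$ is either asymptotic to a regular cone or to the round cylinder $\mathbb{S}^1_*\times\Real$; if some end is cylindrical, a drift strong-maximum-principle / Frankel-type argument forces $\Sigma$, up to rotation, to equal $\mathbb{S}^1_*\times\Real$.

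In the remaining case $\Sigma$ is asymptotically conical and Theorem~\ref{MainACSThm} applies with $n=2$: the link $\mathcal{L}(\Sigma)\subset\mathbb{S}^2$ is a smoothly embedded closed curve separating $\mathbb{S}^2$ into two components, each diffeomorphic to $\Sigma$. By the Jordan curve theorem the components are open disks, so $\Sigma$ is diffeomorphic to $\Real^2$ and in particular has genus zero. Brendle's classification \cite{Brendle} of embedded genus-zero self-shrinkers in $\Real^3$ then identifies $\Sigma$ with $\mathbb{S}^2_*$, $\mathbb{S}^1_*\times\Real$, or a plane through the origin, of which only the last is asymptotically conical. This finishes the classification under $\lambda[\Sigma]\le\lambda_1$.

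For the $\delta_0$-gap I argue by contradiction: if it fails there is a sequence of self-shrinkers $\Sigma_i$, none on the list, with $\lambda[\Sigma_i]\downarrow\lambda_1$. Local smooth compactness for self-shrinkers of uniformly bounded entropy (Colding--Minicozzi) produces a smooth subsequential limit self-shrinker $\Sigma_\infty$ with $\lambda[\Sigma_\infty]\le\lambda_1$, which by the first part is, up to rotation, one of the three models. The main obstacle of the whole argument is to upgrade this smooth local convergence to $\Sigma_i=\Sigma_\infty$ for large $i$ --- that is, to establish rigidity of each model within the space of small-entropy self-shrinkers. Rigidity of $\mathbb{S}^2_*$ and of the plane follows from the positivity of the drift stability operator in the transverse directions, but rigidity of $\mathbb{S}^1_*\times\Real$ is delicate owing to its non-compactness and the translational Jacobi fields along its axis, and is exactly where the asymptotic analysis of \cite{BernsteinWang} is needed.
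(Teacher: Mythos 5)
Your classification under $\lambda[\Sigma]\leq\lambda_1$ follows the paper's route in outline (closed case; the dichotomy for non-compact shrinkers from \cite{BernsteinWang}; Theorem \ref{MainACSThm} plus Brendle in the asymptotically conical case), but the closed case as you state it does not follow from the entropy-minimization result of \cite{CIMW}: that result gives $\lambda[\Sigma]\geq\lambda_2$ with equality only for $\mathbb{S}^2_*$, and since $\lambda_2<\lambda_1$ it does not exclude a closed shrinker with entropy in $(\lambda_2,\lambda_1]$. What is actually needed is the topological statement of \cite[Theorem 0.8]{CIMW} (entropy at most $\lambda_1$ forces the shrinker to be diffeomorphic to a sphere), which in dimension two requires Ilmanen's theorem \cite{I} that first-time singularities of flows of closed surfaces have smooth support, and then Brendle's genus-zero uniqueness to identify it with $\mathbb{S}^2_*$. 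That is a fixable imprecision; the asymptotically conical case (disk topology from Theorem \ref{MainACSThm}, then \cite[Theorem 2]{Brendle} forcing flatness) matches the paper.

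The genuine gap is in the $\delta_0$ statement. First, the compactness you invoke (Colding--Minicozzi smooth compactness of self-shrinkers) requires uniform genus bounds, which you do not have for the sequence $\Sigma_i$; the paper instead passes to a multiplicity-one $F$-stationary varifold limit (using $\lambda[\Sigma_i]<2$), rules out quasi-planar tangent cones and singular minimal cones in $\Real^3$, and upgrades to $C^\infty_{loc}$ convergence via Allard. Second, and more seriously, the step you yourself flag as ``the main obstacle'' --- upgrading closeness of $\Sigma_i$ to the limit model into equality for large $i$ --- is left unresolved and is pointed at the wrong tool: the asymptotic analysis of \cite{BernsteinWang} is an entropy lower bound and does not provide rigidity of the cylinder. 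The paper closes this step by noting that the limit has entropy exactly $\lambda_1$ (entropies converge since $\lambda=F$ for shrinkers and the convergence is smooth with multiplicity one), so the limit must be a rotation of $\mathbb{S}^1_*\times\Real$, and then invoking the rigidity theorem \cite[Theorem 0.1]{CIM} of Colding--Ilmanen--Minicozzi, which says that any shrinker sufficiently close to the round cylinder on a large ball is itself a rotation of the cylinder; this yields $\lambda[\Sigma_i]=\lambda_1$ for large $i$, the desired contradiction. Without that rigidity input (or a proof of it), your gap argument does not close.
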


This answers affirmatively the $n=2$ case of \cite[Conjecture 0.10]{CIMW} which asks whether  $\mathbb{S}^n_*$ minimizes the entropy amongst all non-flat self-shrinkers. In addition, this gives, for mean curvature flows in $\Real^3$, that $\lambda_2$ is the best constant in White's formulation \cite{WhiteReg} of Brakke's regularity theorem \cite{B}. Another simple consequence of Corollary \ref{ShrinkerClassificationCor} is a sharp lower bound for the entropy of all closed surfaces in $\Real^3$ (see \cite[Theorem 1.1]{BernsteinWang} for a different approach which works for all $2\leq n\leq 6$) and a new entropy lower bound for all closed surfaces in $\Real^3$ of positive genus:

\begin{cor} \label{TopSurfaceCor}
If $\Sigma$ is a closed surface in $\Real^3$, then $\lambda[\Sigma]\geq\lambda_2$ with equality if and only if $\Sigma$ is, up to translations and scalings, $\mathbb{S}^2_*$. Furthermore, if $\Sigma$ has positive genus, then $\lambda[\Sigma]>\lambda_1$. 
\end{cor}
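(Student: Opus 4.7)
I would combine the classification of low-entropy self-shrinkers in Corollary~\ref{ShrinkerClassificationCor} with Huisken's monotonicity applied to the mean curvature flow $\set{\Sigma_t}_{t\in[0,T)}$ started from $\Sigma_0=\Sigma$. Since $\Sigma$ is closed, $T<\infty$, and at any singular space-time point Ilmanen's compactness theorem produces a tangent flow that is a multiplicity-$m$ smooth self-shrinker $\Sigma'$ with $\lambda[\Sigma']\le\lambda[\Sigma]$. In every case we consider $\lambda[\Sigma]<2$, so $m=1$ and $\Sigma'$ is a smooth embedded self-shrinker to which Corollary~\ref{ShrinkerClassificationCor} applies.

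For the sharp lower bound, assume $\lambda[\Sigma]\le\lambda_2$. Then $\lambda[\Sigma']\le\lambda_2<\lambda_1$, so by Corollary~\ref{ShrinkerClassificationCor}, $\Sigma'$ is $\mathbb{S}^2_*$, a hyperplane, or the cylinder $\mathbb{S}^1_*\times\Real$. The cylinder is ruled out because $\lambda_1>\lambda_2$, and a flat tangent flow is ruled out by Brakke--White regularity. Hence $\Sigma'=\mathbb{S}^2_*$ and $\lambda[\Sigma]\ge\lambda_2$. If equality holds, then the Gaussian density at the singular space-time point equals $\lambda[\Sigma]$, and the rigidity clause in Huisken's monotonicity formula forces the flow to be self-similar, so $\Sigma$ is $\mathbb{S}^2_*$ up to translation and rescaling.

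For the positive-genus statement, I argue by contradiction, assuming $\Sigma$ has positive genus and $\lambda[\Sigma]\le\lambda_1$; after passing to a connected component I may take $\Sigma$ connected. By the same tangent-flow analysis, every tangent flow is $\mathbb{S}^2_*$, a hyperplane, or $\mathbb{S}^1_*\times\Real$. A cylindrical tangent flow at the first singular time $T$ yields $\lambda[\Sigma]=\lambda_1$, and then Huisken rigidity identifies $\Sigma$ with a cylinder, contradicting closedness. Otherwise every tangent flow at $T$ is a round sphere; by smooth convergence under parabolic rescaling this isolates a connected component of $\Sigma_t$ (for $t<T$ close to $T$) that is diffeomorphic to $\mathbb{S}^2$ and collapses to a point, so by topological constancy of $\set{\Sigma_t}$ on $[0,T)$, $\Sigma$ itself is a $2$-sphere, contradicting positive genus.

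The most delicate step is this last topological assertion: passing from ``the tangent flow at $(\yY,T)$ is a pure multiplicity-one $\mathbb{S}^2_*$'' to ``a connected component of $\Sigma_t$ is a topological $2$-sphere that shrinks to $\yY$.'' Any attachment of the shrinking sphere to the rest of $\Sigma_t$ via a thin neck would contribute extra sheets in the parabolic rescaling limit and prevent the tangent flow from being pure $\mathbb{S}^2_*$; combined with embeddedness of $\Sigma_t$, this forces the shrinking sphere to be a separate connected component. Making this rigorous requires a quantitative use of Brakke--White regularity together with the smooth convergence of the parabolic rescaling to the tangent shrinker.
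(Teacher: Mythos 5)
Your proposal is correct and takes essentially the same route as the paper: flow the closed surface to its finite-time singularity, classify the multiplicity-one (smooth, by Ilmanen) tangent flow via Corollary \ref{ShrinkerClassificationCor} together with entropy monotonicity, handle the equality and cylindrical cases by the rigidity in Huisken's monotonicity formula, and use Brakke--White regularity to see that a spherical tangent flow forces $\Sigma$ to be a $2$-sphere, which rules out positive genus. The only differences are organizational: you run the genus argument by contradiction and spell out the rigidity step that the paper compresses into ``the monotonicity of entropy,'' while the delicate topological step you flag is exactly the point the paper also settles by citing Brakke's local regularity theorem.
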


Our proof of Theorem \ref{MainACSThm} parallels that of Colding-Ilmanen-Minicozzi-White, who, in \cite{CIMW}, showed that $\mathbb{S}^n_*$ uniquely minimizes the entropy within the class of closed self-shrinkers. Indeed, our arguments may be thought of as a natural analog, in the asymptotically conical setting, of their arguments.  However, we wish to emphasize one crucial difference between our proofs. Namely, in \cite{CIMW} the authors exploit the fact that a closed hypersurface must form a singularity in finite time -- indeed, they point out this is the main obstruction to extending their result to the non-compact setting.  We, however, make use of the fact that flows of asympotically conical surfaces of small entropy must exist without singularities for long-time to show that the flows eventually become star-shaped with respect to the origin.

More precisely, inspired by \cite{CIMW}, we first study properties of a quantity along a smooth mean curvature flow that we call the \emph{shrinker mean curvature} -- see \eqref{SMCDefn}. In particular, we use a parabolic maximum principle on non-compact manifolds as in \cite{EH} to show that if the initial surface of a smooth mean curvature flow is well-behaved at infinity and has positive shrinker mean curvature, then the shrinker mean curvature remains positive along the flow -- see Proposition \ref{LowBndSMCProp}. We also use a standard parabolic maximum principle to conclude that for such a flow, the shrinker mean curvature controls the second fundamental form -- see Proposition \ref{CurvBndSMCProp}.   

A variant on the shrinker mean curvature is also considered in \cite{CIMW} where the above properties are shown to hold for smooth mean curvature flows of \emph{closed} hypersurfaces. However, in \cite{CIMW} the authors find it convenient to think of self-shrinkers as static points and so instead work with a certain rescaled mean curvature flow.  As such, their quantity has a different, but related, form.  While this may appear to be merely a technical distinction it actually holds the key to our proof.  Indeed, the quantity we consider makes sense for smooth mean curvature flows which start close to a self-shrinker but that persist up to (and beyond) the singular time of the self-shrinker.  By its construction, this cannot be true of the quantity considered in \cite{CIMW}.

 We next observe that if $\Sigma$ is any non-flat asymptotically conical self-shrinker, then there are asymptotically conical perturbations $\Gamma^\pm$ of $\Sigma$ on each side of $\Sigma$ which have positive shrinker mean curvature (relative to the correct orientation), strictly smaller entropy and which have the same asymptotic cone as that of $\Sigma$ -- see Proposition \ref{PerturbedInitialSurfaceProp}.  As in \cite{CIMW}, these two hypersurfaces are found by considering the normal exponential graphs of small multiples of the lowest eigenfunction of the (self-shrinker) stability operator. Again the non-compactness introduces certain technical difficulties.

In our third step, we consider the smooth mean curvature flows with initial surfaces $\Gamma^\pm$. Directly using arguments in \cite{CIMW}, we prove that for non-compact flows of positive shrinker mean curvature if a singularity develops, then the Gaussian density of the flows at the singular point must be at least $\lambda_{n-1}$. Hence, if the entropy of $\Sigma$ is at most $\lambda_{n-1}$, then the flows starting from $\Gamma^\pm$ never develop singularities. We further show that each time slice of these flows is smoothly asymptotic to the same cone as that of $\Sigma$.

Finally, we observe that any long-time solution of the mean curvature flow with positive shrinker mean curvature and which is asymptotically conical must have a time-slice which is star-shaped -- see Theorem \ref{StarshapThm} and Proposition  \ref{RadialGraphTopProp}. Theorem \ref{MainACSThm} is then an immediate consequence. The proofs of Corollaries \ref{ShrinkerClassificationCor} and \ref{TopSurfaceCor} are  straightforward.

\section{Notation} 
Let $\Real^{n+1}$ denote the standard $(n+1)$-dimensional Euclidean space. We denote by 
\begin{equation}
B_R(\xX_0)=\set{\xX\in\Real^{n+1}: |\xX-\xX_0|<R}
\end{equation}
the open Euclidean ball of radius $R$ centered at $\xX_0$. When the center is the origin $\OO$, we will omit it. A \emph{hypersurface} in $\Real^{n+1}$ is a proper codimension one submanifold of $\Real^{n+1}$. At times it will be convenient to distinguish between a point $p\in\Sigma$ and its \emph{position vector} $\xX(p)\in\Real^{n+1}$.  Given a set $\Omega\subset\Real^{n+1}$ we define the $\rho$-tubular neighborhood of $\Omega$ to be the open set
\begin{equation}
\mathcal{T}_{\rho} \left(\Omega\right)=\bigcup_{p\in\Omega} B_{\rho}(p)\subset \Real^{n+1}.
\end{equation} 

Any hypersurface is two-sided and hence orientable -- see for instance \cite{Samelson}. For this reason, we will always orient a hypersurface, $\Sigma$, by a choice of unit normal 
\begin{equation}
\mathbf{n}_\Sigma:\Sigma \to \mathbb{S}^n\subset \Real^{n+1}.
\end{equation}  
For such a $\Sigma$, we let $g_{\Sigma}$ denote the induced Riemannian metric, $A_\Sigma$ be the second fundamental form (with respect to the choice of unit normal $\nN_\Sigma$) and $H_\Sigma$ the associated scalar mean curvature which we take to be the trace of $A_\Sigma$. Observe that the mean curvature vector $\mathbf{H}_{\Sigma}=-H_{\Sigma} \nN_\Sigma$ is well-defined independent of choices of unit normal as is the norm $|A_\Sigma|$ of the second fundamental form. For any hypersurface, $\Sigma$, and $\rho\in\Real^{+}$ we have the natural smooth map
\begin{equation}
\Psi_\rho: \Sigma\times (-\rho, \rho)\to \mathcal{T}_\rho(\Sigma)
\end{equation}
given by $\Psi_\rho(p, s)=\xX(p)+s\nN_\Sigma(p)$. We say $\mathcal{T}_\rho(\Sigma)$ is a \emph{regular tubular neighborhood} if this map is a diffeomorphism.

We consider space-time, $\Real^{n+1}\times\Real$, to be the set of space-time points $X=(\xX,t)\in\Real^{n+1}\times\Real$. And let $O=(\OO,0)$ be the space-time origin. We will often focus on the subset of space-time consisting of space-time points with negative time which we denote by $\Real^{n+1}\times \Real^-$. Given $R, \tau>0$, let 
\begin{equation}
C_{R,\tau} (X_0)=\set{(\xX,t)\in\Real^{n+1}\times\Real: \xX\in B_R(\xX_0), |t-t_0|<\tau}
\end{equation}
be the parabolic cylinder of radius $R$ and height $\tau$ centered at $X_0=(\xX_0,t_0)$.  It will also be convenient to consider the backward parabolic cylinder
\begin{equation}
C_{R,\tau}^- (X_0)=C_{R,\tau}(X_0)\cap\Real^{n+1}\times\set{t<t_0}.
\end{equation}
The \emph{parabolic boundary} of a parabolic cylinder is defined to be
\begin{equation}
\partial_P C_{R,\tau} (X_0) =\partial C_{R,\tau} (X_0)\setminus \left(B_R(\xX_0)\times\set{t=t_0+\tau} \right),
\end{equation}
where $\partial$ is the topological boundary. Likewise, $\partial_P C_{R,\tau}^-(X_0)=\partial C_{R,\tau}^-(X_0)\cap\partial_P C_{R,\tau}(X_0)$.

A \emph{smooth mean curvature flow} is a collection of hypersurfaces $\set{\Sigma_t}_{t\in I}$, where $I$ is some interval in $\Real$ and so that there is a smooth map $F: M\times I\to\Real^{n+1}$ so that for each $t\in I$, $F(\cdot,t): M\to \Sigma_t\subset\Real^{n+1}$ is a parameterization of $\Sigma_t$ and so that
\begin{equation} \label{NormalMCFEqn}
\left(\frac{\partial}{\partial t}F(p,t)\right)^\perp =\mathbf{H}_{\Sigma_t}(F(p,t)).
\end{equation}
We will always take the hypersurfaces $\Sigma_t$ in a smooth mean curvature flow to be oriented so that the unit normal is smooth in $t$. It is often convenient to consider the \emph{space-time track} of a smooth mean curvature flow
\begin{equation}
\mathcal{S}=\set{(\xX(p),t)\in\mathbb{R}^{n+1}\times \mathbb{R}: p\in\Sigma_t},
\end{equation}
which is a smooth submanifold of space-time (with boundary if $I$ contains either of its endpoints) that is transverse to each constant time hyperplane. We will not distinguish between a smooth mean curvature flow $\set{\Sigma_t}_{t\in I}$ and its space-time track and so denote both by $\mathcal{S}$. Along the space-time track of a smooth mean curvature flow $\mathcal{S}$,  let $\frac{d}{dt}$ be the smooth vector field given by 
\begin{equation}
\frac{d}{dt}=\frac{\partial}{\partial t}+\mathbf{H}_{\Sigma_t}.
\end{equation}
It is not hard to see that this vector field is tangent to $\mathcal{S}$ and the position vector satisfies
\begin{equation} \label{MCFEqn}
\frac{d}{dt} \xX(p,t)=\mathbf{H}_{\Sigma_t}(p).
\end{equation}
It is straightforward (and standard) to compute the evolution of various geometric quantities with respect to this vector field -- see for instance \cite[Appendix B]{EckerBook}. 

For a set $\Omega\subset \Real^{n+1}$, an $\xX\in \Real^{n+1}$ and a $\rho\in\Real^+$, let
\begin{enumerate}
 \item $ \Omega+\xX=\set{\yY\in\Real^{n+1}: \yY-\xX\in\Omega}$, the translation of $\Omega$ by $\xX$; and
\item $\rho\, \Omega= \set{\yY\in\Real^{n+1}: \rho^{-1} \yY\in\Omega}$, the scaling of $\Omega$ by $\rho$. 
\end{enumerate}
Similarly, for a set $\Omega\subset\Real^{n+1}\times\Real$ in space-time, an $X\in\Real^{n+1}\times\Real$ and a $\rho\in\Real^+$,
\begin{enumerate}
 \item $ \Omega+X=\set{Y\in\Real^{n+1}\times \Real: Y-X\in\Omega}$, the space-time translation of $\Omega$ by $X$; and
 \item $\rho\, \Omega= \set{(\yY,t)\in \Real^{n+1}: (\rho^{-1}\yY , \rho^{-2} t) \in\Omega}$, the parabolic scaling of $\Omega$ by $\rho$. 
\end{enumerate}
Clearly, if $\mathcal{S}$ is the space-time track of a smooth mean curvature flow, then so is $\rho \mathcal{S}+X$ for any $\rho\in\Real^+$ and $X\in \Real^{n+1}\times\Real$.

\section{Shrinker Mean-Convexity}
Let $\mathcal{S}=\set{\Sigma_t}_{t\in [-1,T)}$ be a smooth mean curvature flow. Along the flow $\mathcal{S}$, we define the \emph{shrinker mean curvature relative to the space-time point $X_0=(\xX_0,t_0)$} to be
\begin{equation}\label{SMCDefn}
S^{X_0}_{\Sigma_t} (p)=2(t_0-t) H_{\Sigma_t} (p)-\left(\xX(p)-\xX_0\right)\cdot\nN_{\Sigma_t} (p).
\end{equation}
We emphasize that, due to the $t$ dependence, this quantity is defined for $\mathcal{S}$. Given a time $t\in\Real$ and hypersurface $\Sigma$, the shrinker mean curvature of $\Sigma$ relative to the space-time point $X_0$ and time $t$ is defined to be
\begin{equation}
S^{X_0,t}_{\Sigma} (p)=2(t_0-t) H_{\Sigma} (p)-\left(\xX(p)-\xX_0\right)\cdot\nN_{\Sigma} (p).
\end{equation}

We recall the following evolution equation for the shrinker mean curvature due to Smoczyk \cite[Proposition 4]{Smoczyk}.

\begin{lem} \label{SMCEvLem} 
Along a mean curvature flow $\set{\Sigma_t}_{t\in I}$, the shrinker mean curvature relative to $X_0$ satisfies
\begin{equation}\label{SMCEvEqn}
\frac{d}{dt} S^{X_0}_{\Sigma_t} =\Delta_{\Sigma_t} S^{X_0}_{\Sigma_t} +\abs{A_{\Sigma_t}}^2 S^{X_0}_{\Sigma_t}.
\end{equation}
\end{lem}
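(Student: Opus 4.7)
The plan is to compute $\frac{d}{dt} S^{X_0}_{\Sigma_t}$ and $\Delta_{\Sigma_t} S^{X_0}_{\Sigma_t} + |A_{\Sigma_t}|^2 S^{X_0}_{\Sigma_t}$ directly from the definition and observe that both expressions reduce to the same explicit formula. Since every ingredient is a classical identity from the theory of hypersurfaces and mean curvature flow, the lemma amounts to careful bookkeeping of signs and factors rather than any genuinely new idea.

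Suppressing the subscript $\Sigma_t$ on geometric quantities, I would rely on four standard facts (see e.g.\ \cite[Appendix~B]{EckerBook}): (i) $\frac{d}{dt} H = \Delta H + |A|^2 H$ along mean curvature flow; (ii) $\frac{d}{dt}\xX = -H\nN$; (iii) $\frac{d}{dt}\nN = \nabla H$, viewed as a tangent vector; and (iv) the hypersurface identity
\begin{equation*}
\Delta\bigl[(\xX - \xX_0)\cdot\nN\bigr] = H + (\xX - \xX_0)\cdot\nabla H - |A|^2\,(\xX - \xX_0)\cdot\nN,
\end{equation*}
which follows from $\Delta \xX = -H\nN$, the Weingarten relation, the Codazzi equation, and a short computation in local normal coordinates. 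Combining (ii) and (iii) yields the time-derivative counterpart
\begin{equation*}
\tfrac{d}{dt}\bigl[(\xX - \xX_0)\cdot\nN\bigr] = -H + (\xX - \xX_0)\cdot\nabla H.
\end{equation*}

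Substituting (i) and this last identity into the time derivative of $S^{X_0}_{\Sigma_t}$, and substituting (iv) together with the trivial $\Delta[2(t_0-t)H] = 2(t_0-t)\Delta H$ into $\Delta S + |A|^2 S$, both sides of \eqref{SMCEvEqn} expand to
\begin{equation*}
2(t_0 - t)\,\Delta H + 2(t_0 - t)|A|^2 H - H - (\xX - \xX_0)\cdot\nabla H,
\end{equation*}
which proves the identity. The cancellation on the $\Delta + |A|^2$ side worth flagging is that the two copies of $|A|^2(\xX - \xX_0)\cdot\nN$ appearing with opposite signs --- one from (iv), one from $|A|^2 S$ --- annihilate each other. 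The only real risk in carrying this out is a sign error; a useful sanity check is that $S^{X_0}_{\Sigma_t}$ vanishes identically on any self-similar flow shrinking into $X_0$ at time $t_0$, so the right-hand side of \eqref{SMCEvEqn} must also vanish in that case, which it manifestly does.
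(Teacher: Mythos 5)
Your proposal is correct, and it is worth noting that the paper does not actually prove Lemma \ref{SMCEvLem} at all --- it simply cites Smoczyk's Proposition 4 --- so your direct computation supplies a self-contained verification rather than a replay of an argument in the text. The ingredients you invoke are the right ones under the paper's conventions $\mathbf{H}_{\Sigma_t}=-H_{\Sigma_t}\nN_{\Sigma_t}$ and $\frac{d}{dt}=\frac{\partial}{\partial t}+\mathbf{H}_{\Sigma_t}$ (so $\frac{d}{dt}\xX=-H\nN$): with this normal parametrization one has $\frac{d}{dt}H=\Delta H+\abs{A}^2H$ and $\frac{d}{dt}\nN=\nabla_{\Sigma_t}H$, and the Weingarten plus contracted-Codazzi computation indeed yields $\Delta\left[(\xX-\xX_0)\cdot\nN\right]=H+(\xX-\xX_0)\cdot\nabla_{\Sigma_t}H-\abs{A}^2(\xX-\xX_0)\cdot\nN$, so both sides of \eqref{SMCEvEqn} collapse to $2(t_0-t)\left(\Delta H+\abs{A}^2H\right)-H-(\xX-\xX_0)\cdot\nabla_{\Sigma_t}H$ exactly as you state, with the two $\abs{A}^2(\xX-\xX_0)\cdot\nN$ terms cancelling. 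The only point to make explicit in a write-up is that identities (i)--(iii) are tied to the sign convention for $H$ and the velocity $-H\nN$ (for instance $\frac{d}{dt}\nN=\nabla H$ changes sign under the opposite convention), so the computation should state which convention is in force; your sanity check that $S^{X_0}_{\Sigma_t}\equiv 0$ on a flow shrinking self-similarly into $X_0$ is a good independent confirmation that the signs are consistent.
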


Observe that $\mathcal{S}$ is self-similar with respect to parabolic rescalings about $X_0$ if and only if $S^{X_0}_{\Sigma_t}\equiv 0$. More generally, if one parabolically dilates $\mathcal{S}$ about $X_0$, then the vector field of the normal variation of this family at $\mathcal{S}$ is the shrinker mean curvature vector relative to $X_0$. Hence, as parabolic dilations are symmetries of the mean curvature flow, \eqref{SMCEvEqn} may be viewed as the linearization of the mean curvature flow. In a similar fashion, the quantities $\eE_i\cdot \nN_{\Sigma_t}$ used by Ecker-Huisken \cite{EH} are generated by spatial translations and so also satisfy \eqref{SMCEvEqn}. Likewise, the mean curvature $H_{\Sigma_t}$ which also satisfies \eqref{SMCEvEqn} arises in this manner from temporal translations.

We now use the maximum principle of Ecker-Huisken \cite[Corollary 1.1]{EH} on non-compact manifolds to show that if a smooth mean curvature flow $\set{\Sigma_t}_{t\in [-1,T)}$ satisfies that $\Sigma_{-1}$ is shrinker mean convex with respect to $X_0$, then this remains true for all $t\in (-1,T)$.

\begin{prop}\label{LowBndSMCProp}
Let $\mathcal{S}=\set{\Sigma_t}_{t\in [-1,T)}$ be a smooth mean curvature flow in $\Real^{n+1}$ with finite entropy. Suppose that $\Sigma_{-1}$ satisfies
\begin{equation} \label{InitialSMCEqn}
S^{X_0}_{\Sigma_{-1}}(p)\geq c\left(1+|\xX (p)|^2\right)^{-\alpha}
\end{equation}
for some $c>0$ and $\alpha\geq 0$, and that
\begin{equation} \label{C3BndEqn}
M=\sup_{t\in [-1,T)}\sup_{\Sigma_t\setminus B_R} \abs{A_{\Sigma_t}}+\abs{\nabla_{\Sigma_t}A_{\Sigma_t}}+\abs{\nabla^2_{\Sigma_t} A_{\Sigma_t}}<\infty
\end{equation}  
for some $R>0$. For all $(p,t)\in\mathcal{S}$,
\begin{equation} \label{SMCFlowEqn}
S^{X_0}_{\Sigma_t}(p)\geq c\left(1+|\xX (p)|^2+2n (t+1)\right)^{-\alpha}. 
\end{equation}
\end{prop}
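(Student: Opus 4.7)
The plan is to apply a parabolic maximum principle to the difference $v = c\,\phi^{-\alpha} - S^{X_0}_{\Sigma_t}$, where $\phi(p,t) = 1 + |\xX(p)|^2 + 2n(t+1)$ is the proposed barrier. With this choice, the hypothesis \eqref{InitialSMCEqn} is exactly $v(\cdot, -1) \leq 0$, and the desired conclusion \eqref{SMCFlowEqn} is the assertion $v \leq 0$ throughout $\mathcal{S}$.

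First, I would check that $\phi$ lies in the kernel of the heat operator along the flow. From $\frac{d}{dt}|\xX|^2 = 2\xX \cdot \mathbf{H}_{\Sigma_t}$ together with $\Delta_{\Sigma_t}|\xX|^2 = 2\xX \cdot \mathbf{H}_{\Sigma_t} + 2n$ (using $\Delta_{\Sigma_t}\xX = \mathbf{H}_{\Sigma_t}$ and $|\nabla_{\Sigma_t}\xX|^2 = n$), one obtains $(\frac{d}{dt} - \Delta_{\Sigma_t})|\xX|^2 = -2n$, and so $(\frac{d}{dt} - \Delta_{\Sigma_t})\phi = 0$. The chain rule then yields $(\frac{d}{dt} - \Delta_{\Sigma_t})\phi^{-\alpha} = -\alpha(\alpha+1)\phi^{-\alpha-2}|\nabla_{\Sigma_t}\phi|^2 \leq 0$. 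Combined with Lemma \ref{SMCEvLem}, this gives
\[
(\tfrac{d}{dt} - \Delta_{\Sigma_t}) v \leq -|A_{\Sigma_t}|^2 S^{X_0}_{\Sigma_t} = |A_{\Sigma_t}|^2 v - c|A_{\Sigma_t}|^2 \phi^{-\alpha} \leq |A_{\Sigma_t}|^2 v.
\]

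To deduce $v \leq 0$ from this inequality and the initial sign, I would invoke the Ecker--Huisken maximum principle on non-compact mean curvature flows, after absorbing the zeroth-order coefficient via the standard substitution $w = e^{-Kt}v$. The task is to verify, on any subinterval $[-1, T'] \subset [-1, T)$, that the coefficient $|A_{\Sigma_t}|^2$ is uniformly bounded and that $v_+$ has the Gaussian integrability required by \cite[Corollary 1.1]{EH}. The former follows from \eqref{C3BndEqn} outside $B_R$, combined with smoothness of $\mathcal{S}$ and properness of each $\Sigma_t$ on the compact piece $\bigcup_{t \in [-1,T']}(\Sigma_t \cap \overline{B_R})\times\{t\}$. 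The latter follows because these same bounds give $|S^{X_0}_{\Sigma_t}| \leq C(1+|\xX|)$, so $v_+$ grows at most linearly in $|\xX|$; finite entropy then controls $\int_{\Sigma_t}(1+|\xX|^2)e^{-|\xX|^2/4}\,d\mathcal{H}^n$ (after the standard rescaling trick that upgrades the entropy bound to integrability of the Gaussian against polynomials). Passing to the limit $T' \nearrow T$ yields $v \leq 0$ on $\mathcal{S}$, which is \eqref{SMCFlowEqn}. The only obstacle of substance is the non-compactness of $\Sigma_t$; the hypotheses \eqref{C3BndEqn} and finite entropy are tailored precisely to handle it, the former giving the pointwise control needed for the zeroth-order term and for $|S^{X_0}_{\Sigma_t}|$, and the latter supplying the Gaussian $L^2$ estimate on $v_+$.
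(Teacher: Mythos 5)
Your argument is correct and follows essentially the same route as the paper: both use the Ecker--Huisken caloric barrier $\eta=1+|\xX|^2+2n(t+1)$ together with Lemma \ref{SMCEvLem} and a non-compact Ecker--Huisken type maximum principle, with coefficient bounds coming from \eqref{C3BndEqn} (plus smooth compactness in $\bar{B}_R\times[-1,T']$) and Gaussian integrability from finite entropy; the paper simply works with $u=\eta^{\alpha}S^{X_0}_{\Sigma_t}\geq c$ rather than your equivalent $v=c\,\eta^{-\alpha}-S^{X_0}_{\Sigma_t}\leq 0$. The only bookkeeping point is that the maximum principle actually used (Theorem \ref{NonCompactMaxPrincipleThm}, the paper's version of \cite[Corollary 1.1]{EH}) also requires Gaussian-weighted $L^2$ control of $\nabla_{\Sigma_t}u$, $\nabla^2_{\Sigma_t}u$ and $du/dt$, which is precisely why \eqref{C3BndEqn} includes $\nabla_{\Sigma_t}A_{\Sigma_t}$ and $\nabla^2_{\Sigma_t}A_{\Sigma_t}$; your pointwise bounds yield these estimates in the same way, so nothing essential is missing.
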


\begin{proof}
Let 
\begin{equation}
\eta(\xX,t)=1+ |\xX|^2+2n (t+1).
\end{equation}
By \cite[Lemma 1.1]{EH} we have that
\begin{equation} \label{BarrierEvEqn}
\left(\frac{d}{dt} -\Delta_{\Sigma_t} \right) \eta=0,
\end{equation}
and so
\begin{equation}
\left(\frac{d}{dt} -\Delta_{\Sigma_t} \right) \eta^{\alpha}=-\alpha (\alpha-1) |\nabla_{\Sigma_t}\log\eta|^2 \eta^{\alpha}.
\end{equation}

Define 
\begin{equation}
u(p,t)=\eta^\alpha(\xX(p),t) S^{X_0}_{\Sigma_t}(p) \quad\mbox{for $(p,t)\in\mathcal{S}$}.
\end{equation}
Then it follows from Lemma \ref{SMCEvLem} and \eqref{BarrierEvEqn} that
\begin{equation} \label{EQNforu}
\left(\frac{d}{dt}-\Delta_{\Sigma_t}\right) u+2\alpha\nabla_{\Sigma_t}\log\eta\cdot\nabla_{\Sigma_t} u=\abs{A_{\Sigma_t}}^2 u+\alpha (\alpha+1) \abs{\nabla_{\Sigma_t} \log\eta}^2 u.
\end{equation}
Notice that 
\begin{equation}
\abs{\nabla_{\Sigma_t}\log\eta} (p,t)=\frac{2 \abs{\xX(p)^\top}}{1+\abs{\xX(p)}^2+2n (t+1)}<2,
\end{equation}
and by \eqref{C3BndEqn} and \eqref{EQNforu} we have that on $\mathcal{S}\setminus C_{R,T+1}(\OO,-1)$,
\begin{equation}
\abs{\frac{du}{dt}}(p,t)+\sum_{i=0}^2\abs{\nabla^i_{\Sigma_t} u} (p,t)\leq C(M,\alpha, X_0) \left(1+|\xX (p)|^2+2n (t+1)\right)^{\alpha+1}.
\end{equation}
Invoking \eqref{InitialSMCEqn} and finiteness of the entropy, Theorem \ref{NonCompactMaxPrincipleThm} implies, when $R=0$, that 
\begin{equation}
\inf_{p\in\Sigma_t} u(p,t)\geq\inf_{p\in\Sigma_{-1}} u(p,-1)\geq c,
\end{equation}
giving immediately \eqref{SMCFlowEqn}.
\end{proof}

We further adapt some ideas of \cite{EH} -- specifically the proof of \cite[Lemma 4.1]{EH} -- to prove a relationship between the shrinker mean curvature and the second fundamental form for shrinker mean convex flows.

\begin{prop} \label{CurvBndSMCProp}
Let $\mathcal{S}=\set{\Sigma_t}_{t\in [-1,T)}$ be a smooth mean curvature flow in $\Real^{n+1}$ with finite entropy. Suppose that $\Sigma_{-1}$ satisfies 
\begin{equation}
S^{X_0}_{\Sigma_{-1}}(p)\geq c \left(1+|\xX(p)|^2\right)^{-\alpha}
\end{equation}
for some $c>0$ and $\alpha\geq 0$, and that
\begin{equation} \label{InitialCurvBndEqn}
\tilde{M}=\max\set{\sup_{t\in [-1,T)}\sup_{ \Sigma_t\setminus B_{R}} \abs{A_{\Sigma_t}}+\abs{\nabla_{\Sigma_t} A_{\Sigma_t}}+\abs{\nabla^2_{\Sigma_t} A_{\Sigma_t}}, \sup_{\Sigma_{-1}} \abs{A_{\Sigma_{-1}}}}<\infty
\end{equation}
for some $R>0$. Then for all $(p,t)\in\mathcal{S}$,
\begin{equation} \label{CurvBndSMCEqn}
\abs{A_{\Sigma_t} (p)}\leq c^{-1} \tilde{M} \left(1+|\xX(p)|^2+R^2+2n (t+1)\right)^\alpha S^{X_0}_{\Sigma_t}(p). 
\end{equation}
\end{prop}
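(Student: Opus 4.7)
The plan is to apply the parabolic maximum principle to the smooth non-negative function $w := \abs{A_{\Sigma_t}}^2/S^2$, where $S := S^{X_0}_{\Sigma_t}$. By Proposition \ref{LowBndSMCProp} the hypotheses force
\[
S(p,t) \;\geq\; c\bigl(1+\abs{\xX(p)}^2+2n(t+1)\bigr)^{-\alpha} \;>\; 0
\]
on the space-time track $\mathcal{S}$, so $w$ is well defined and smooth there, and the desired conclusion is equivalent to the pointwise bound $w(p,t) \leq c^{-2}\tilde{M}^2(1+\abs{\xX(p)}^2+R^2+2n(t+1))^{2\alpha}$.

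First I would compute the evolution of $w$. Combining Lemma \ref{SMCEvLem} with the standard identity $(\frac{d}{dt}-\Delta_{\Sigma_t})\abs{A_{\Sigma_t}}^2 = 2\abs{A_{\Sigma_t}}^4 - 2\abs{\nabla_{\Sigma_t}A_{\Sigma_t}}^2$ via the quotient rule for the heat operator, the $\abs{A}^4$ contributions cancel, leaving
\[
\left(\frac{d}{dt}-\Delta_{\Sigma_t}\right) w \;=\; -\frac{2\abs{\nabla_{\Sigma_t} A_{\Sigma_t}}^2}{S^2} + \frac{2w\abs{\nabla_{\Sigma_t} S}^2}{S^2} + \frac{4}{S}\,\nabla_{\Sigma_t} S\cdot\nabla_{\Sigma_t} w.
\]
The key observation, modelled on the argument in \cite[Lemma 4.1]{EH}, is that at any critical point of $w$ with $\abs{A}>0$ the identity $\nabla w = 0$ gives $\nabla \abs{A}^2 = (2\abs{A}^2/S)\nabla S$, and Kato's inequality $\abs{\nabla\abs{A}^2}^2 \leq 4\abs{A}^2\abs{\nabla A}^2$ then forces $\abs{\nabla A}^2 \geq w\abs{\nabla S}^2$. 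Substituting makes the right-hand side above nonpositive. Since $w=0$ wherever $\abs{A}=0$, no interior maximum of $w$ can occur at such points, so $(\frac{d}{dt}-\Delta_{\Sigma_t})w \leq 0$ holds at every interior maximum of $w$.

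I would then fix $t\in[-1,T)$ and apply the weak parabolic maximum principle on the compact space-time cylinder $D := \mathcal{S}\cap(\bar{B}_R\times[-1,t])$, whose compactness follows from properness of each $\Sigma_s$ together with boundedness of $\bar{B}_R$. The standard perturbation $w_\epsilon := w-\epsilon(s+1)$, combined with the cancellation above, rules out interior maxima of $w_\epsilon$, so letting $\epsilon\to 0^+$ gives $\sup_D w = \sup_{\partial_P D} w$. On the bottom $\Sigma_{-1}\cap\bar{B}_R$ the initial hypotheses yield $w \leq c^{-2}\tilde{M}^2(1+R^2)^{2\alpha}$, and on the lateral face $\mathcal{S}\cap(\partial B_R\times[-1,t])$ the tail assumption $\abs{A_{\Sigma_s}}\leq\tilde{M}$ together with Proposition \ref{LowBndSMCProp} yields $w\leq c^{-2}\tilde{M}^2(1+R^2+2n(t+1))^{2\alpha}$; both are dominated by the claimed right-hand side, giving the estimate on $\Sigma_t\cap\bar{B}_R$. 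Outside $B_R$ the bound is immediate from $\abs{A}\leq\tilde{M}$ combined with the lower bound on $S$. The main technical point is the Kato-type cancellation at critical points; crucially, unlike in Proposition \ref{LowBndSMCProp}, no non-compact maximum principle is needed, thanks to the spatial restriction to $\bar{B}_R$.
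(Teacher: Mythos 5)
Your proposal is correct and follows essentially the same route as the paper: the same quotient $\abs{A_{\Sigma_t}}^2/\abs{S^{X_0}_{\Sigma_t}}^2$, the same use of Proposition \ref{LowBndSMCProp} to reduce to the region inside $B_R$ and to control the lateral and initial boundary values, and the same maximum principle on the compact parabolic region $\mathcal{S}\cap C^-_{R,t+1}(\OO,t)$. The only difference is bookkeeping: the paper absorbs the cross terms by Young's inequality (together with the Kato-improved evolution inequality for $\abs{A_{\Sigma_t}}^2$) to get a drift–diffusion inequality valid everywhere and then quotes the standard parabolic maximum principle, whereas you invoke Kato's inequality only at critical points of the perturbed quotient — an equivalent argument.
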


\begin{proof}
First observe that, by Proposition \ref{LowBndSMCProp}, for all $(p,t)\in\mathcal{S}$,
\begin{equation} \label{LowSMCFlowEqn}
S^{X_0}_{\Sigma_t}(p)\geq c \left(1+|\xX(p)|^2+2n (t+1)\right)^{-\alpha}.
\end{equation}
Thus, in view of \eqref{InitialCurvBndEqn}, it suffices to prove \eqref{CurvBndSMCEqn} on $\mathcal{S}\cap C_{R,T+1}^-(\OO,T)$.

Let
\begin{equation} 
u(p,t)=\abs{A_{\Sigma_t}(p)}^2 v^2(p,t)=\abs{A_{\Sigma_t}(p)}^2 |S^{X_0}_{\Sigma_t}(p)|^{-2}.
\end{equation}  
By \cite[Appendix B, (B.9)]{EckerBook},   
\begin{equation}
\left(\frac{d}{dt} -\Delta_{\Sigma_t} \right) \abs{A_{\Sigma_t}}^2 \leq -2 \abs{\nabla_{\Sigma_t} \abs{A_{\Sigma_t}} }^2 +2 \abs{A_{\Sigma_t}}^4.
\end{equation}
And by Lemma \ref{SMCEvLem}, 
\begin{equation}
\left(\frac{d}{dt}-\Delta_{\Sigma_t}\right) v^2=-2 \abs{A_{\Sigma_t}}^2 v^2-6 \abs{\nabla_{\Sigma_t} v}^2
\end{equation}
Thus, we compute the evolution equation of $u$:
\begin{equation}
\begin{split}
\left(\frac{d}{dt}-\Delta_{\Sigma_t} \right) u = & \abs{A_{\Sigma_t}}^2\left(\frac{d}{dt} -\Delta_{\Sigma_t} \right) v^2 -2 \nabla_{\Sigma_t} v^2 \cdot \nabla_{\Sigma_t} \abs{A_{\Sigma_t}}^2 \\
& +v^2 \left(\frac{d}{dt}-\Delta_{\Sigma_t} \right) \abs{A_{\Sigma_t}}^2 \\
& \leq -6 \abs{A_{\Sigma_t}}^2 \abs{\nabla_{\Sigma_t} v}^2-2 \nabla_{\Sigma_t} v^2 \cdot \nabla_{\Sigma_t} \abs{A_{\Sigma_t}}^2 \\
& -2 \abs{\nabla_{\Sigma_t} \abs{A_{\Sigma_t}} }^2 v^2.
\end{split}
\end{equation}
Using Young's inequality, we obtain the estimate: 
\begin{equation}
\begin{split}
 -2 \nabla_{\Sigma_t} v^2 \cdot \nabla_{\Sigma_t} \abs{A_{\Sigma_t}}^2 = & -\frac{\nabla_{\Sigma_t} v^2}{v^2} \cdot \nabla_{\Sigma_t} \left(\abs{A_{\Sigma_t}}^2 v^2\right) +\frac{\abs{A_{\Sigma_t}}^2 \abs{\nabla_{\Sigma_t} v^2}^2}{v^2} \\
    & -\nabla_{\Sigma_t} v^2 \cdot \nabla_{\Sigma_t} \abs{A_{\Sigma_t}}^2\\
 = & -\frac{\nabla_{\Sigma_t} v^2}{v^2}\cdot \nabla_{\Sigma_t} u +4 \abs{A_{\Sigma_t}}^2 \abs{\nabla_{\Sigma_t} v}^2 \\
    & -4v \abs{A_{\Sigma_t}} \nabla_{\Sigma_t} v \cdot \nabla_{\Sigma_t} \abs{A_{\Sigma_t}} \\
\leq & -\frac{\nabla_{\Sigma_t} v^2}{v^2}\cdot \nabla_{\Sigma_t} u +6 \abs{A_{\Sigma_t}}^2 \abs{\nabla_{\Sigma_t} v}^2  \\
       &+2v^2 \abs{\nabla_{\Sigma_t} \abs{A_{\Sigma_t}} }^2.
\end{split}
\end{equation}
Hence,
\begin{equation}
 \left(\frac{d}{dt} -\Delta_{\Sigma_t} \right) u \leq -\frac{\nabla_{\Sigma_t} v^2}{v^2}\cdot \nabla_{\Sigma_t} u = -2\nabla_{\Sigma_t} \log v \cdot\nabla_{\Sigma_t} u.
\end{equation}

Observe that the hypothesis \eqref{InitialCurvBndEqn} and \eqref{LowSMCFlowEqn} imply that
\begin{equation}
\sup_{\mathcal{S}\cap\partial_P C_{R,t+1}^-(\OO,t)} u \leq c^{-2} \tilde{M}^2 \left(1+R^2+2n (t+1)\right)^{2\alpha}.
\end{equation}
Therefore the standard parabolic maximum principle implies that
\begin{equation}
\sup_{\mathcal{S}\cap C_{R,t+1}^-(\OO,t)} u \leq c^{-2} \tilde{M}^2 \left(1+R^2+2n (t+1)\right)^{2\alpha},
\end{equation}
which proves the result.
\end{proof}

\section{Self-shrinkers of small entropy asymptotic to regular cones}
We define $\mathcal{ACS}_n$ to be the space of connected asymptotically conical self-shrinkers in $\mathbb{R}^{n+1}$. Observe that the hyperplanes through the origin are contained in $\mathcal{ACS}_n$. We denote by $\mathcal{ACS}_n^*$ the subspace of non-flat elements of $\mathcal{ACS}_n$. Furthermore, given a $\lambda\geq 1$, we let $\mathcal{ACS}_n[\lambda]$ be the set of elements of $\mathcal{ACS}_n$ with Gaussian surface area (and hence entropy) less than or equal to $\lambda$ and define $\mathcal{ACS}_n^*[\lambda]$ likewise.

The goal of this section is to prove the long-time existence of a smooth mean curvature flow starting from a perturbation of any element of $\mathcal{ACS}_n^*[\lambda_{n-1}]$. First we need the following properties of the lowest eigenfunction of the stability operator.

\begin{prop}\label{PosEigenValProp}
Given $\Sigma\in\mathcal{ACS}_n^*$, there is a $\mu=\mu (\Sigma)<-1$ and a unique positive smooth function $f$ on $\Sigma$ which satisfies
\begin{equation} \label{EigenEqn}
L_\Sigma f=\Delta_{\Sigma} f -\frac{\xX}{2}\cdot\nabla_{\Sigma} f +\abs{A_{\Sigma}}^2 f+\frac{1}{2} f = -\mu f\quad\mbox{with}\quad \int_\Sigma f^2 e^{-\frac{|\xX|^2}{4}}d\mathcal{H}^n=1.
\end{equation}
Moreover, for each $\beta>0$, there are constants $C_0,\ldots, C_m,\ldots >1$ depending on $\Sigma$ and $\beta$ so that
\begin{equation} \label{EigenBndEqn}
C_0^{-1}\left(1+|\xX (p)|^2\right)^{\frac{1}{2}+\mu-\beta}<f(p)<C_0\left(1+|\xX (p)|^2\right)^{\frac{1}{2}+\mu+\beta}, \quad\mbox{and}
\end{equation}
\begin{equation} \label{EigenDerEqn}
\abs{\nabla^m_\Sigma f(p)}<C_m\left(1+|\xX (p)|^2\right)^{\frac{1}{2}+\mu+\beta-\frac{m}{2}}\quad\mbox{for all $m\geq 1$}.
\end{equation}
\end{prop}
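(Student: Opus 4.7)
The plan is to combine spectral theory of drift Schrödinger operators on the Gaussian-weighted $L^2$ space with an asymptotic analysis of the eigenvalue equation near the cone at infinity. The Gaussian weight $e^{-|\xX|^2/4}$ suppresses the ends of $\Sigma$, so $L_\Sigma$ behaves spectrally like an elliptic operator on a compact manifold, while the asymptotic conical geometry pins down the precise decay rate of eigenfunctions.

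First I would realize $-L_\Sigma$ as a self-adjoint operator on $L^2(\Sigma,e^{-|\xX|^2/4}d\mathcal{H}^n)$ via the Friedrichs extension of the natural quadratic form $Q(u)=\int_\Sigma(|\nabla_\Sigma u|^2-|A_\Sigma|^2 u^2-\tfrac{1}{2}u^2)e^{-|\xX|^2/4}d\mathcal{H}^n$. Since $\Sigma$ has polynomial volume growth and $|A_\Sigma|$ is bounded (and decays along the cone), the associated weighted Sobolev embedding is compact, so $-L_\Sigma$ has discrete spectrum bounded below. A Perron--Frobenius argument, using that $e^{tL_\Sigma}$ is positivity preserving by the maximum principle and irreducible by connectedness of $\Sigma$, shows the bottom eigenvalue $\mu$ is simple and admits a positive eigenfunction $f$, unique once $L^2$-normalized, and smooth by elliptic regularity. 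For the strict inequality $\mu<-1$, observe that $L_\Sigma H=H$, so $H$ is an eigenfunction of $-L_\Sigma$ with eigenvalue $-1$; since $\Sigma\in\mathcal{ACS}_n^*$, the Colding--Minicozzi classification of embedded self-shrinkers of polynomial volume growth with $H\geq 0$ (only hyperplanes through the origin and generalized cylinders, none asymptotically conical in our sense) forces $H$ to change sign on $\Sigma$. Consequently $H$ is not proportional to the positive eigenfunction $f$, so $-1$ cannot be the bottom eigenvalue and $\mu<-1$.

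The remaining, most delicate, step is establishing the decay estimates \eqref{EigenBndEqn} and \eqref{EigenDerEqn}. Along the ends, $\Sigma$ is $C^\infty_{\mathrm{loc}}$-close to the cone $\mathcal{C}(\Sigma)$, where $|A|=O(r^{-1})$ and $|\xX|=r$ is the radial coordinate; the eigenvalue equation reduces at leading order to
\begin{equation*}
\Delta_{\mathcal{C}(\Sigma)} f-\tfrac{r}{2}\partial_r f+\bigl(\mu+\tfrac{1}{2}\bigr)f\approx 0,
\end{equation*}
whose radial separated solutions scale as $r^{1+2\mu}$, matching $(1+|\xX|^2)^{(1+2\mu)/2}$ in the statement. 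I would then construct explicit super- and sub-solutions of the form $C_\pm(1+|\xX|^2)^{\frac{1}{2}+\mu\pm\beta}$ on a sufficiently large end of $\Sigma$, use the a priori Gaussian $L^2$-integrability of $f$ to anchor the barrier argument at finite scales, and apply the maximum principle to conclude \eqref{EigenBndEqn}. The derivative bounds \eqref{EigenDerEqn} then follow from rescaled Schauder estimates on unit-scale neighborhoods of far-away points, feeding in the pointwise bound just proved.

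The main obstacle is the lower bound in \eqref{EigenBndEqn}: ruling out anomalously fast decay requires sub-solutions that respect both the conical geometry and the eigenmode structure of the cross-section, since the true leading behavior involves a decomposition of $f$ into spherical harmonics on the link $\mathcal{L}(\Sigma)$ with differing radial exponents. Ensuring the exponent loss $\beta$ can be taken arbitrarily small, and correctly controlling the error terms coming from $|A_\Sigma|^2 f$ and the conical approximation, is where the real technical work of the proposition lies.
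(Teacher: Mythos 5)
Your functional-analytic front end (Friedrichs extension, compact embedding of $H^1_w$ into $L^2_w$, Perron--Frobenius for the ground state, and the observation that $L_\Sigma H_\Sigma=H_\Sigma$ with $H_\Sigma$ changing sign forces $\mu<-1$) is a legitimate alternative to the paper's exhaustion by geodesic balls and is essentially the mechanism behind the facts the paper cites from Colding--Minicozzi. The genuine gap is in how you propose to prove \eqref{EigenBndEqn}. Your barriers $C_\pm(1+|\xX|^2)^{\frac12+\mu\pm\beta}$ are exactly the paper's, but ``anchor the barrier argument at finite scales and apply the maximum principle'' does not work as stated on the noncompact end: a priori $f$ is only known to lie in $L^2_w$, which permits pointwise growth up to nearly $e^{|\xX|^2/8}$, so neither the upper nor the lower comparison follows from data on an inner sphere plus a super/sub-solution --- the sup of $f-C\bar g$ need not be attained, and the favorable sign of $\abs{A_\Sigma}^2+\tfrac12+\mu$ at infinity is useless without control of $f$ there. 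The paper fills precisely this hole: it solves Dirichlet problems on the bounded annuli $\Sigma\cap(B_i\setminus\bar B_{\mathcal R})$ with data $f$ inside and $C\bar g$ outside, traps the solutions between $C^{-1}\underline g$ and $C\bar g$, passes to a limit $\tilde g$, and then proves $f\equiv\tilde g$ by a weighted Caccioppoli/integration-by-parts identity with cutoffs, using $\mu<-1$ to make the zeroth-order term negative outside a compact set and the polynomial bounds on $\tilde g$ to justify convergence of the weighted integrals. Some argument of this Phragm\'en--Lindel\"of/energy type (exploiting the Gaussian weight) is unavoidable; also note that the lower bound then comes for free from the radial sub-barrier --- no spherical-harmonic decomposition of the link is needed, so you have misplaced where the difficulty sits.

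The derivative bounds \eqref{EigenDerEqn} are a second, smaller gap: ``rescaled Schauder estimates on unit-scale neighborhoods'' cannot produce the extra factor $(1+|\xX|^2)^{-m/2}$, because the drift $-\tfrac{\xX}{2}\cdot\nabla_\Sigma$ has coefficient of size $r=|\xX(p)|$ on a unit ball about a far point (so the Schauder constants blow up polynomially in $r$), while at the Ornstein--Uhlenbeck scale $1/r$, where the rescaled coefficients are bounded, each derivative costs a factor $r$ rather than gaining $r^{-1}$. The paper circumvents this by a self-similar parabolic trick: setting $\tilde f(q,t)=(-t)^{\mu+\frac12}f(q/\sqrt{-t})$ on $\Sigma_t=\sqrt{-t}\,\Sigma$, equation \eqref{EigenEqn} becomes $(\tfrac{d}{dt}-\Delta_{\Sigma_t})\tilde f=\abs{A_{\Sigma_t}}^2\tilde f$, and since $\Sigma_t\to\mathcal C(\Sigma)$ smoothly away from the origin, parabolic interior estimates on a fixed annulus as $t\to0$ give bounds on $\nabla^m_{\Sigma_t}\tilde f$ that unwind exactly to \eqref{EigenDerEqn}. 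You would need either this device or some equally careful scheme adapted to the large radial drift; the elliptic argument you sketch would fail to deliver the stated decay.
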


\begin{proof}
Given functions $\phi,\psi$ on $\Sigma$, we define
\begin{equation}
(\phi,\psi)_0=\int_\Sigma\phi\psi e^{-\frac{|\xX|^2}{4}} d\mathcal{H}^n, \quad  (\phi,\psi)_1= (\phi,\psi)_0+(\nabla_\Sigma\phi,\nabla_\Sigma\psi)_0,
\end{equation}
and $\Vert\phi\Vert^2_i=(\phi,\phi)_i$ for $i=0,1$. Let $L_w^2(\Sigma)$ be the Hilbert space of functions on $\Sigma$ with finite $||\cdot ||_0$-norm and $H^1_w(\Sigma)$ be the Hilbert space given by completing $C_c^\infty(\Sigma)$ using $\Vert \cdot \Vert_1$. Observe that $H^1_w(\Sigma)$ may be naturally identified with a subspace of $L^2_w(\Sigma)$. In fact, this embedding is compact -- see Appendix B or \cite[Theorem 3]{ChengZhou}.

Denote by $B^\Sigma_R$ the intrinsic geodesic open ball in $\Sigma$ centered at a fixed point $p\in\Sigma$ with radius $R$. Note that $\bigcup_{R>0} B^\Sigma_R=\Sigma$ as $\Sigma$ is connected. As observed in \cite[Remark 5.17]{CM}, there is a $\mu_R=\mu_R(\Sigma)$ and a unique positive function $f_R\in H_w^1(B^\Sigma_R)$ so that $L_{\Sigma} f_R+\mu_R f_R=0$ with $\Vert f_R \Vert_0^2=1$. Moreover, $\mu_R$ is characterized by
\begin{equation}
\mu_R=\inf_{\phi} \frac{\int_{\Sigma} \left(\abs{\nabla_{\Sigma} \phi}^2 -\abs{A_{\Sigma}}^2 \phi^2 -\frac{1}{2} \phi^2\right) e^{-\frac{|\xX|^2}{4}} d\mathcal{H}^n}{\int_{\Sigma}\phi^2 e^{-\frac{|\xX|^2}{4}} d\mathcal{H}^n},
\end{equation}
where the infimum is taken over all smooth non-zero functions compactly supported in $B^\Sigma_R$. As $\Sigma\in\mathcal{ACS}^*_n$, it follows from \cite[Lemma 2.1]{WaRigid} that
\begin{equation} \label{CurvDecayEqn}
\left(1+|\xX (p)|^2\right) \abs{A_{\Sigma} (p)}^2=O(1).
\end{equation}
Obviously, $\mu_R$ is non-increasing in $R$. Thus, $\mu_R\to\mu$ as $R\to\infty$ for some $\mu> -\infty$. This further implies that $\Vert f_R \Vert_1^2$ is uniformly bounded. Hence, there is a sequence $R_j\to\infty$ so that $f_{R_j}$ converges to a function $f$ on $\Sigma$ weakly in $H_w^1$ and strongly in $L_w^2$. Clearly, $f\in H_w^1(\Sigma)$ satisfying that $L_\Sigma f+\mu f=0$ with $\Vert f \Vert_0^2=1$. The smoothness of $f$ follows from a standard elliptic regularity theory. As each $f_{R_j}$ is non-negative, so is $f$ and, in fact, $f>0$ on $\Sigma$ by the Harnack inequality. Finally, it follows from \cite[Lemma 9.25]{CM} that such an $f$ is unique. 

As $\Sigma$ is non-flat, the classification of mean convex self-shrinkers \cite[Theorem 0.17]{CM} implies that $H_\Sigma$ must change sign. Hence, \cite[Theorem 9.36]{CM} implies that $\mu<-1$. For $\beta>0$, define
\begin{equation}
\bar{g}(\xX)=|\xX|^{1+2\mu+2\beta}\quad\mbox{and}\quad \underline{g}(\xX)=|\xX|^{1+2\mu-2\beta}.
\end{equation}
Then, invoking \cite[Lemma 3.20]{CM} and \eqref{CurvDecayEqn}, a direct computation gives
\begin{equation}
L_\Sigma\bar{g}+\mu\bar{g}<0\quad\mbox{and}\quad L_\Sigma\underline{g}+\mu\underline{g}>0\quad\mbox{on $\Sigma\setminus\bar{B}_{\mathcal{R}}$},
\end{equation}
for some $\mathcal{R}=\mathcal{R}(\beta,\Sigma)>1$ sufficiently large. Next we choose $C>1$ so that
\begin{equation}
C^{-1}\underline{g}<f<C\bar{g}\quad\mbox{on $\partial\left(\Sigma\setminus B_{\mathcal{R}}\right)$}.
\end{equation}
Consider the Dirichlet problem
\begin{equation} \label{DirichletEqn}
\left\{
\begin{array}{ll}
L_\Sigma g+\mu g=0 & \mbox{on $\Sigma\cap\left(B_i\setminus\bar{B}_{\mathcal{R}}\right)$}, \\
g=f & \mbox{on $\partial\left(\Sigma\setminus B_{\mathcal{R}}\right)$}, \\
g=C\bar{g} & \mbox{on $\partial\left(\Sigma\setminus B_i\right)$}.
\end{array}
\right.
\end{equation}
Note that the zeroth order term of the differential equation in \eqref{DirichletEqn} has a negative sign. Thus, it follows from \cite[Theorems 8.3 and 8.13 ]{GT} and the maximum principle that for each $i>\mathcal{R}$, the problem \eqref{DirichletEqn} has a unique smooth solution $g=g_i$ bounded between $C^{-1}\underline{g}$ and $C\bar{g}$. Passing $i\to\infty$, by the Arzela-Ascoli theorem, $g_i$ converges in $C^\infty_{loc} (\Sigma\setminus B_{\mathcal{R}})$ to some function $\tilde{g}$ so that $L_\Sigma\tilde{g}+\mu\tilde{g}=0$ and $\tilde{g}=f$ on $\partial (\Sigma\setminus B_{\mathcal{R}})$. Moreover,
\begin{equation} \label{BoundEqn}
C^{-1} |\xX (p)|^{1+2\mu-2\beta}<\tilde{g} (p)<C |\xX (p)|^{1+2\mu+2\beta}.
\end{equation}
It follows from \eqref{CurvDecayEqn} and elliptic Schauder estimates that $|\nabla_\Sigma\tilde{g}|$ grows at most polynomially. This together with the Euclidean volume growth of $\Sigma$ implies that
\begin{equation} \label{WeightGradEqn}
\int_{\Sigma\setminus B_{\mathcal{R}}}\abs{\nabla_\Sigma\tilde{g}}^2 e^{-\frac{|\xX|^2}{4}} d\mathcal{H}^n<\infty.
\end{equation}

To conclude the proof, it suffices to show that $h=f-\tilde{g}=0$ on $\Sigma\setminus B_{\mathcal{R}}$. Let $\phi_i$ be a cut-off function on $\mathbb{R}^{n+1}$ so that $\phi_i=1$ in $B_i$, $\phi_i=0$ outside $B_{2i}$, and $|D\phi_i|<2/i$. Since $L_\Sigma h+\mu h=0$ with $h=0$ on $\partial (\Sigma\setminus B_{\mathcal{R}})$, multiplying both sides of the equation by $h\phi_i^2 e^{-|\xX|^2/4}$ and integration by parts give that
\begin{equation}
\begin{split}
\int_{\Sigma\setminus B_{\mathcal{R}}} \abs{\nabla_\Sigma h}^2 \phi_i^2 e^{-\frac{|\xX|^2}{4}} d\mathcal{H}^n & -\int_{\Sigma\setminus B_{\mathcal{R}}} \left(\abs{A_\Sigma}^2+\frac{1}{2}+\mu\right) h^2 \phi_i^2 e^{-\frac{|\xX|^2}{4}} d\mathcal{H}^n \\
& +\int_{\Sigma\cap (B_{2i}\setminus B_i)} 2h\phi_i \nabla_\Sigma\phi_i\cdot\nabla_\Sigma h e^{-\frac{|\xX|^2}{4}} d\mathcal{H}^n=0.
\end{split}
\end{equation}
Thus, sending $i\to\infty$ and invoking that $\mu<-1$, the monotone convergence theorem together with \eqref{BoundEqn} and \eqref{WeightGradEqn} implies that $h=0$ on $\Sigma\setminus B_{\mathcal{R}}$ and so \eqref{EigenBndEqn} follows. 

Finally, letting $\Sigma_t=\sqrt{-t}\, \Sigma$, we define 
\begin{equation}
 \tilde{f}(q,t)= (-t)^{\mu+\frac{1}{2}} f \left(\frac{q}{\sqrt{-t}}\right) \quad\mbox{for $t<0$ and $q\in\Sigma_t$}.
\end{equation}
Then \eqref{EigenEqn} implies that
\begin{equation}
\left(\frac{d}{dt}-\Delta_{\Sigma_t}\right)\tilde{f}=\abs{A_{\Sigma_t}}^2\tilde{f}.
\end{equation}
And \eqref{EigenBndEqn} gives that, if $t\in [-1,0)$ and $q\in\Sigma_t\cap (B_2\setminus B_1)$, then
\begin{equation}
0<\tilde{f}(q,t)<C^\prime(\Sigma,\mu,\beta)(-t)^{-\beta}.
\end{equation}
Recall that $\Sigma_t\to\mathcal{C} (\Sigma)$ in $C^\infty_{loc}(\mathbb{R}^{n+1}\setminus\set{\OO})$ as $t\to 0$. Thus, it follows from standard parabolic regularity theory that for each integer $m\geq 1$, there is a $C^{\prime\prime}=C^{\prime\prime} (\Sigma,C^\prime,m)$ so that if $p\in\Sigma\setminus B_R$, then
\begin{equation}
|\nabla_\Sigma^m f(p)|\leq C^{\prime\prime} |\xX (p)|^{1+2\mu+2\beta-m},
\end{equation}
proving \eqref{EigenDerEqn}.
\end{proof}

We next use the lowest eigenfunction of the stability operator to perturb any element of $\mathcal{ACS}_n^*$ in order to strictly decrease its entropy.

\begin{prop}\label{PerturbedInitialSurfaceProp}
Let $\Sigma\in\mathcal{ACS}_n^*$ and let $C_0$, $\mu$, and $f$ be as in Proposition \ref{PosEigenValProp}. There is an $\epsilon_0=\epsilon_0(\Sigma)\in (0,1)$ and  $K=K(\Sigma)>1$ so that if $|\epsilon|<\epsilon_0$, then:
\begin{enumerate} 
 \item \label{PISPi}
 the normal graph over $\Sigma$ given by
 \begin{equation}
 \Gamma^\epsilon=\left\{\yY\in\mathbb{R}^{n+1}: \yY=\xX (p)+\epsilon f(p)\nN_\Sigma (p), \, p\in\Sigma\right\}
 \end{equation} 
 is a smooth hypersurface;
 
 \item \label{PISPv}  
for all $R>1$ we have that $\Gamma^\epsilon \setminus B_{KR} \subset \mathcal{T}_{1/R} \left(\mathcal{C} (\Sigma)\right)$; 
 
 \item  \label{PISPii} 
$\Gamma^\epsilon$ is asymptotically conical and, in particular, given $\delta>0$ there is a $\kappa\in (0,1)$ and $\mathcal{R}>1$ depending only on $\delta$ and $\Sigma$ so that if $p\in\Sigma\setminus B_{\mathcal{R}}$ and $r=\kappa |\xX (p)|$, then $\Gamma^\epsilon\cap B_r(p)$ can be written as a connected graph of a function $w$ over $T_p\Sigma$ with $|Dw|\leq\delta$; 
 
 \item \label{PISPiii} 
 by a suitable choice of the normal to $\Gamma^\epsilon$, 
 \begin{equation} \label{SMCInitialEqn}
S^{O, -1}_{\Gamma^\epsilon}(p)\geq -\mu |\epsilon| C_0^{-1}\left(1+|\xX (p)|^2\right)^{\mu}\geq 0;
 \end{equation}

 \item  \label{PISPiv} 
 if $\epsilon\neq 0$, then $\lambda[\Gamma^\epsilon]<\lambda[\Sigma]$.
\end{enumerate}
\end{prop}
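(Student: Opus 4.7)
For parts \eqref{PISPi}--\eqref{PISPii}, I would use that since $\Sigma\in\mathcal{ACS}_n^*$, the curvature decays as $|A_\Sigma|=O(|\xX|^{-1})$, so $\Sigma$ admits a regular tubular neighborhood whose radius grows linearly in $|\xX|$ outside a compact set. For $\beta<1/2$ fixed, \eqref{EigenBndEqn}--\eqref{EigenDerEqn} show that $\eps f$ and $\eps\nabla_\Sigma f$ decay as negative powers of $|\xX|$ (since $\mu<-1$), so for small $|\eps|$ they are far smaller than the tubular radius, yielding \eqref{PISPi}. For \eqref{PISPv}, combine the decay $\dist(\xX(p),\mathcal{C}(\Sigma))\to 0$ with the decay of $|\eps f|$ and choose $K$ sufficiently large. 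Part \eqref{PISPii} is essentially inherited from $\Sigma$: on balls of radius $\kappa|\xX(p)|$ around $p\in\Sigma$ far from the origin, $\Sigma$ graphs over $T_p\Sigma$ with small $C^1$ norm (by smooth asymptotic conicality), and \eqref{EigenDerEqn} ensures the perturbation $\eps f\nN_\Sigma$ preserves this.

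\smallskip

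For \eqref{PISPiii}, the main step is a Taylor expansion of $S^{O,-1}_{\Gamma^\eps}$ in $\eps$. Since $\Sigma$ is a self-shrinker, $S^{O,-1}_\Sigma\equiv 0$. A direct computation of the first variation of the functional $2H-\xX\cdot\nN$ under normal perturbation $\eps u\nN_\Sigma$ gives exactly $-2L_\Sigma u$: the contribution $-2\Delta u-2|A_\Sigma|^2 u$ from $2\partial_\eps H$ combines with $-u+\xX\cdot\nabla_\Sigma u$ from $-\partial_\eps(\xX\cdot\nN)$. Substituting $u=f$ and using $L_\Sigma f=-\mu f$, the linearization equals $2\mu f<0$ (with orientation inherited from $\nN_\Sigma$). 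Choosing the orientation of $\Gamma^\eps$ opposite to $\nN_\Sigma$ when $\eps>0$, and the inherited orientation when $\eps<0$, flips the sign suitably, so the linear term becomes $-2\mu|\eps|f>0$ in both cases. Inequality \eqref{SMCInitialEqn} then follows from the lower bound on $f$ in \eqref{EigenBndEqn} combined with an $O(\eps^2)$ remainder estimate: the quadratic terms ($\eps^2 f|\nabla_\Sigma f|$, $\eps^2|\nabla_\Sigma f|^2|\xX|$, $\eps^2 f^2|A_\Sigma|$) all decay like $|\xX|^{1+4\mu+4\beta}$ by \eqref{EigenBndEqn}--\eqref{EigenDerEqn} and $|A_\Sigma|=O(|\xX|^{-1})$, and are thus negligible relative to the linear term of size $\sim|\xX|^{1+2\mu-2\beta}$ for $\beta$ small. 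The weaker exponent $\mu$ (versus $\tfrac{1}{2}+\mu-\beta$) in the stated bound absorbs the remaining constants.

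\smallskip

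For \eqref{PISPiv}, I would use the second variation of $F$ at the self-shrinker $\Sigma$. Since $L_\Sigma f=-\mu f$ with $-\mu>1$ and $f$ is (by orthogonality to the eigenfunction $H_\Sigma$ and to the translation directions, which have strictly larger eigenvalues) not tangent to the gauge orbit of dilations and translations, the standard second-variation formula yields $F[\Gamma^\eps]=F[\Sigma]+c\,\eps^2+O(\eps^3)$ with $c<0$, hence $F[\Gamma^\eps]<F[\Sigma]=\lambda[\Sigma]$ for $0<|\eps|<\eps_0$. To upgrade from $F$ to the entropy $\lambda$, I would show that the supremum defining $\lambda[\Gamma^\eps]$ is attained near $(\yY,\rho)=(\OO,1)$. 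Because $\Sigma\in\mathcal{ACS}_n^*$ has no translation or dilation symmetry, the map $(\yY,\rho)\mapsto F[\rho\Sigma+\yY]$ has a strict global maximum at $(\OO,1)$ with value $\lambda[\Sigma]$ (by Huisken monotonicity and the rigidity characterization of self-shrinkers); together with the decay of $F[\rho\Sigma+\yY]$ as $(\yY,\rho)$ escapes compact sets of parameter space and with the $C^\infty_{loc}$ closeness of $\Gamma^\eps$ to $\Sigma$, this strict maximum persists for $\Gamma^\eps$ in a fixed small neighborhood of $(\OO,1)$, where the second-variation estimate applies. I expect this upgrading step to be the main technical obstacle, since the non-compactness of $\Sigma$ allows, in principle, mass to escape under translations and dilations, so one needs a uniform quantitative strict-maximum statement for $F$ on the $(\yY,\rho)$-orbit of $\Sigma$ in the asymptotically conical setting.
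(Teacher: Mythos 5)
Your treatment of Items \eqref{PISPi}, \eqref{PISPv}, \eqref{PISPii} and \eqref{PISPiii} follows essentially the same route as the paper: tubular-neighborhood regularity of $\Sigma$, the decay \eqref{EigenBndEqn}--\eqref{EigenDerEqn} with $\mu+\beta<-1$, and for \eqref{PISPiii} the expansion $S^{O,-1}_{\Gamma^\epsilon}=-2|\epsilon|\mu f+Q$ with a quadratic remainder dominated by the linear term thanks to the eigenfunction decay (the paper outsources the expansion and the graphicality estimates to the computations of Wang's uniqueness paper, but the content is the same as your sketch, including the orientation convention that makes the linear term positive).

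The genuine gap is in Item \eqref{PISPiv}, and it is exactly the step you flag but do not carry out: upgrading $F[\Gamma^\epsilon]<F[\Sigma]$ to $\lambda[\Gamma^\epsilon]<\lambda[\Sigma]$. Your argument that the strict maximum of $(\yY,\rho)\mapsto F[\rho\Sigma+\yY]$ at $(\OO,1)$ ``persists'' for $\Gamma^\epsilon$ because of $C^\infty_{loc}$ closeness cannot work as stated: $C^\infty_{loc}$ closeness gives no information in the three non-compact regimes of the parameter space, namely $|\xX_0|\to\infty$, $t_0\to\infty$, and $t_0\to 0$ (equivalently $\rho\to 0$ and $\rho\to\infty$), and it is precisely there that the supremum defining $\lambda[\Gamma^\epsilon]$ could in principle be attained. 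What is needed -- and what the paper supplies in Appendix C -- is a set of estimates \emph{uniform in the perturbation parameter}: (i) uniform area-ratio bounds for all $\Gamma^s$ at all scales, obtained from area ratios at scales $r>1$ together with the uniform mean curvature bound and the differential inequality $\partial_{t_0}G\geq -\tfrac{C_H}{4}G$, which also handles the regime $t_0\to 0$ by propagating smallness down from a fixed $\bar t$; (ii) uniform smallness of the Gaussian tails; (iii) for $|\xX_0|$ large and $t_0\lesssim 1$, the asymptotically conical structure forces $(\Gamma^s-\xX_0)/\sqrt{t_0}$ to be either far from the origin or a nearly flat graph, so $G$ is close to $1<\lambda[\Sigma]$ there, while for $t_0$ large one compares $F[\rho\Gamma^s-\yY_0]$ to $F[\rho\Sigma-\yY_0]$ with an error $C_E\to 0$ as $\epsilon_1\to 0$, using curvature bounds on $\rho\Gamma^s$ away from the origin; and (iv) a uniform bound on $|\partial_s G|$ on compact parameter sets, which converts the strict gap $G(\xX_0,t_0,0)<G(\OO,1,0)-\alpha_0$ away from $(\OO,1)$ (Colding--Minicozzi's Lemma 7.10) into the same statement for small $s\neq 0$, the remaining small neighborhood of $(\OO,1,0)$ being handled by the negative-definite Hessian you describe. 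Without quantitative substitutes for (i)--(iv), the claim that no entropy is gained from far translations or extreme dilations of the perturbed, non-compact surface is unsupported, so your proposal as written does not yet prove Item \eqref{PISPiv}.
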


\begin{proof}
By Proposition \ref{PosEigenValProp}, $\mu<-1$, and so if $\beta=\frac{1}{2}\min\set{1, -\mu-1}$, then $0<\beta\leq 1/2$ and $\mu+\beta<-1$. As $\mu$ depends only on $\Sigma$, so does $\beta$. Thus the constants $C_0, C_1$ in Proposition \ref{PosEigenValProp} depend only on $\Sigma$ as well.
Now define the mapping
\begin{equation}
\Xi_\epsilon: \Sigma\to\Gamma^\epsilon\subset\mathbb{R}^{n+1},\quad \Xi_\epsilon (p)=\xX (p)+\epsilon f(p)\nN_\Sigma (p).
\end{equation}
%Let $C_0=C_0(\Sigma,\beta)$ be the constant given by Proposition \ref{PosEigenValProp}.
As $\Sigma\in\mathcal{ACS}_n^*$, in view of \cite[Lemma 2.2]{WaRigid}, there is a $\rho>0$ so that $\mathcal{T}_\rho(\Sigma)$ is a regular tubular neighborhood. By Proposition \ref{PosEigenValProp}, $f$ is smooth and uniformly bounded by $C_0$ and so, if $|\epsilon|<\rho/C_0$, then $\Xi_\epsilon(p)=\Psi_\rho(p, \epsilon f(p))$ and so $\Gamma^\epsilon$ is the image under the diffeomorphism $\Psi_\rho$ of the graph of $\epsilon f$ in $\Sigma\times (-\rho,\rho)$. In particular $\Gamma^\epsilon$ is a hypersurface, proving Item \eqref{PISPi}.

As $\mu+\beta<-1$, Item \eqref{PISPv} follows directly from \eqref{EigenBndEqn} for sufficiently large $K=K(\Sigma)$.

Let $\delta>0$ be any given small constant and $\mathcal{R}=\mathcal{R}(\delta,\Sigma)$ be a sufficiently large constant which may change among lines. By \cite[Lemma 2.2]{WaRigid}, there is a $\kappa=\kappa (\delta,\Sigma)\in (0,1)$ so that if $p\in\Sigma\setminus B_{\mathcal{R}}$ and $r=\kappa |\xX (p)|$, then $\Sigma\cap B_{2r} (p)$ is given by a connected graph of a function over $T_p\Sigma$ with gradient bounded by $\delta/2$. As $\beta$ was chosen so $\beta+\mu<-1$,  \eqref{EigenBndEqn} and \eqref{EigenDerEqn} together with \cite[Lemma 2.1]{WaRigid}, and a direct computation (see the proof of \cite[Lemma 2.4]{WaRigid}) imply that if $q\in\Sigma\cap B_{2r} (p)$, then
\begin{equation} \label{NormalOscEqn}
\abs{\nN_{\Gamma^\epsilon} (\Xi_\epsilon (q))\cdot\nN_\Sigma (q)}>1-C\epsilon\left(1+|\xX(p)|\right)^{-2},
\end{equation} 
where $C>0$ depends only on $C_0,C_1$ and $\Sigma$. For $|\epsilon|<\rho/C_0$, $|\Xi_\epsilon (p)-p|<\rho$ and so
\begin{equation} \label{ConnectEqn}
\Gamma^\epsilon\cap B_r (p)\subset \Xi_\epsilon \left(\Sigma\cap B_{2r} (p)\right).
\end{equation}
Combining \eqref{NormalOscEqn} and \eqref{ConnectEqn}, if $p\in\Sigma\setminus B_{\mathcal{R}}$, we conclude that $\Gamma^\epsilon\cap B_r(p)$ can be written as a connected graph of a function over $T_p\Sigma$ with gradient bounded by $\delta$. In view of \eqref{EigenBndEqn} and \eqref{EigenDerEqn}, it follows from Item \eqref{PISPv} that $\Gamma^\epsilon$ is smoothly asymptotic to $\mathcal{C} (\Sigma)$. This completes the proof of Item \eqref{PISPii}.

As computed in the proof of \cite[Lemma 2.4]{WaRigid}, it follows from an appropriate choice of unit normal to $\Gamma^\epsilon$ and \eqref{EigenEqn} that
\begin{equation}
S^{O,-1}_{\Gamma^\epsilon}=2|\epsilon| L_\Sigma f+Q_\Sigma(\epsilon f,\epsilon\nabla_\Sigma f,\epsilon\nabla_\Sigma^2 f)=-2|\epsilon|\mu f+Q_\Sigma(\epsilon f,\epsilon\nabla_\Sigma f,\epsilon\nabla_\Sigma^2 f).
\end{equation}
Where $Q_\Sigma$ is a polynomial all of whose terms are of order at least $2$ and which has uniformly bounded coefficients.  This, together with \eqref{EigenBndEqn}, \eqref{EigenDerEqn} and the fact that $\beta\leq 1/2$, implies Item \eqref{PISPiii} for all $|\epsilon|$ sufficiently small. 

Finally, Item \eqref{PISPiv} can be shown by modifying the arguments in the proof of \cite[Theorem 0.15]{CM}. We leave the technical details to Appendix C.
\end{proof}

We next use sphere barriers to show that smooth mean curvature flows which initially decay to a regular cone must continue to decay to the same cone for all positive time.
 
\begin{lem} \label{NearConeLem}
Let $\set{\Gamma_t}_{t\in [-1,T)}$ be a smooth mean curvature flow in $\Real^{n+1}$. Suppose that there is a regular cone, $\mathcal{C}$, and a $K>1$ so that $\Gamma_{-1}\setminus B_{KR} \subset \mathcal{T}_{1/R} (\mathcal{C})$ for all $R>1$. Then there is a constant $K^\prime=K^\prime (\mathcal{C},K)>1$ so that for all $R>1$ and $t\in [-1,T)$,
\begin{equation}
\Gamma_t \setminus B_{K^\prime\left(R+\sqrt{2n(t+1)}\right)} \subset \mathcal{T}_{K^\prime \left(1+2n(t+1)\right) R^{-1}} (\mathcal{C}).
\end{equation}
\end{lem}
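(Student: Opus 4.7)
The plan is to apply the avoidance principle for smooth mean curvature flow, using shrinking round spheres as the barriers. Recall that $\partial B_{r_0}(y_0)$ at time $-1$ evolves under mean curvature flow to $\partial B_{r(t)}(y_0)$ with $r(t)=\sqrt{r_0^2 - 2n(t+1)}$, remaining smooth as long as $r_0 > \tau := \sqrt{2n(t+1)}$. The avoidance principle then guarantees that if $\overline{B_{r_0}(y_0)} \cap \Gamma_{-1} = \emptyset$, then $\overline{B_{r(t)}(y_0)} \cap \Gamma_t = \emptyset$ for all $t \in [-1,T)$ for which the shrinking sphere exists. Hence, by contraposition, it suffices to show that for $K' = K'(\mathcal{C}, K) > 1$ large enough, every point $q$ with $|q|\geq K'(R+\tau)$ and $d := \mathrm{dist}(q,\mathcal{C}) \geq K'(1+\tau^2)/R$ admits such a barrier ball at time $-1$ whose shrunken counterpart at time $t$ strictly contains $q$ in its interior.

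To construct the barrier I place its center at $y_0 = q + \alpha\hat v$, where $\hat v$ is a unit vector pointing outward from the nearest point on $\mathcal{C}$ to $q$ and $\alpha \geq 0$ is a parameter. By the smoothness of the link of $\mathcal{C}$, for $\alpha$ restricted to a range $[0, c|q|]$ with $c = c(\mathcal{C}) > 0$, one has the linear estimate $\mathrm{dist}(y_0,\mathcal{C}) \geq d + \alpha/2$, while $|y_0| \geq |q| - \alpha$ holds trivially. Setting $r_0 = (1+\nu)\sqrt{\alpha^2+\tau^2}$ for a small $\nu > 0$ places $q$ strictly inside the shrunken ball at time $t$, since $\sqrt{r_0^2 - \tau^2} > \alpha = |q - y_0|$. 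The hypothesis on $\Gamma_{-1}$ says $\Gamma_{-1} \subset B_{KR_0} \cup \mathcal{T}_{1/R_0}(\mathcal{C})$ for any $R_0 > 1$, so disjointness of $\overline{B_{r_0}(y_0)}$ from $\Gamma_{-1}$ reduces to the pair of inequalities
\[
|q| - \alpha > r_0 + KR_0 \quad\text{and}\quad d + \alpha/2 > r_0 + 1/R_0,
\]
together with $R_0 > 1$.

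The remaining task is to calibrate $(\alpha, \nu, R_0)$ so that these inequalities hold uniformly in $R > 1$ and $t \in [-1,T)$, with a single $K' = K'(\mathcal{C}, K)$. In the regime $d \gtrsim \tau$ (occurring e.g.\ when $R \lesssim K'\tau$, or whenever $\tau$ is small), the simple choice $\alpha = 0$ and $r_0$ just above $\tau$ suffices, with $R_0$ of order $1/d$. In the harder regime $d \ll \tau$ (which arises for large $R$), one takes $\alpha$ comparable to $\tau^2/d$ so that $r_0 - \alpha \approx \tau^2/(2\alpha)$ becomes small relative to $d$, making the distance inequality feasible; the resulting condition $|q| > 2\alpha + O(\tau^2/\alpha) + KR_0$ is then controlled by $|q| \geq K'(R+\tau)$, since the choice forces $\alpha \lesssim \tau^2/d \lesssim R/K'$ and $R_0 \lesssim 1/d \lesssim R/K'$. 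The main obstacle is precisely this bookkeeping, together with the care required to keep $\alpha$ inside $[0, c|q|]$ so that the linear lower bound for $\mathrm{dist}(y_0, \mathcal{C})$ remains valid. Once the parameters are fixed, the avoidance principle yields $q \notin \Gamma_t$, completing the proof.
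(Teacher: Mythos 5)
Your overall strategy (shrinking round spheres as barriers, centered along the outward normal from the cone, plus the avoidance principle) is the same family of ideas as the paper's proof, but there is a genuine quantitative gap at the heart of your ``hard regime.'' The only lower bound you allow yourself is $\dist(y_0,\mathcal{C})\geq d+\alpha/2$, and this is too weak to make the distance inequality feasible when $d\ll\tau$. Indeed, your disjointness requirement forces $d+\alpha/2> r_0+1/R_0>\sqrt{\alpha^2+\tau^2}$, and for any loss factor $\theta<1$ in a bound of the form $\dist(y_0,\mathcal{C})\geq d+\theta\alpha$ one has $\sqrt{\alpha^2+\tau^2}-\theta\alpha\geq\tau\sqrt{1-\theta^2}$ for every $\alpha\geq 0$; with $\theta=1/2$ this forces $d>\tau\sqrt{3}/2$, so no choice of $\alpha$, $\nu$, $R_0$ works once $d\lesssim\tau$ --- precisely the regime the lemma is really about, since for large $R$ the target tube radius $K'(1+\tau^2)/R$ is much smaller than $\tau$. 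In particular your proposed calibration $\alpha\sim\tau^2/d$ is inconsistent with your own estimate: there $\alpha\gg d$, so $d+\alpha/2<\alpha<r_0$ and the inequality $d+\alpha/2>r_0+1/R_0$ fails outright. What the argument actually needs is the \emph{exact} gain $\dist(y_0,\mathcal{C})=d+\alpha$ (no constant-factor degradation), which holds only after one sets up the scale-invariant regular tubular neighborhood of the cone --- i.e.\ the statement that $\Psi(p,l)=\xX(p)+l\nN_{\mathcal{C}}(p)$ is a diffeomorphism on $\set{|l|<2\vartheta|\xX(p)|}$ for some $\vartheta=\vartheta(\mathcal{C})$, so that the nearest point to $y_0$ remains the nearest point to $q$ as long as $d+\alpha\lesssim\vartheta|q|$. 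This is exactly the ingredient the paper's proof installs before running the barrier argument.

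For comparison, the paper avoids your two-regime bookkeeping altogether: for a candidate point $\yY$ far from the origin with nearest cone point $p$, it centers the barrier at $\xX(p)+\vartheta|\xX(p)|\nN_{\mathcal{C}}(p)$ with radius $\vartheta|\xX(p)|-R^{-1}$ (both comparable to $|\xX(p)|$), using the exactness of the normal distance inside the tubular neighborhood; since $\yY$ must lie outside the shrunken ball, expanding $\vartheta|\xX(p)|-\sqrt{(\vartheta|\xX(p)|-R^{-1})^2-\tau^2}$ gives $d_{\mathcal{C}}(\yY)\lesssim R^{-1}+\tau^2/(\vartheta|\xX(p)|)\lesssim(1+\tau^2)/R$, the huge radius absorbing the $\tau^2$ loss. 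If you replace your half-rate estimate with the exact statement above (and keep $\alpha\leq c|q|$ so you stay inside the tubular neighborhood, treating separately the trivial case where the nearest point is the vertex, where $d=|q|\gtrsim K'\tau$), your contrapositive version with $\alpha\sim\tau^2/d$ does close, and is then essentially a reparametrization of the paper's argument; as written, however, the proof has a hole exactly where the lemma has content.
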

 
\begin{proof}
For $\xX\in\mathbb{R}^{n+1}$, let $\varrho (\xX)=\inf\set{\rho\geq 0: B_\rho (\xX)\cap\left(B_{KR}\cup\mathcal{T}_{1/R} (\mathcal{C})\right)\neq \emptyset}$. Define an open subset of $\Real^{n+1}$ by
\begin{equation}
U=\bigcup_{\xX\in\mathbb{R}^{n+1}} B_{\varrho (\xX)}(\xX).
\end{equation}
Clearly, $U\cap\Gamma_{-1}=\emptyset$. For $t\geq -1$, set
\begin{equation}
\varrho (\xX,t)=\left\{\begin{array}{cc} 
\sqrt{\varrho (\xX)^2-2n(t+1)} & \varrho (\xX)^2\geq 2n(t+1) \\
0 & \varrho (\xX)^2 < 2n(t+1),
\end{array} \right.
\end{equation}
\begin{equation}
U_t=\bigcup_{\xX\in\mathbb{R}^{n+1}} B_{\varrho (\xX,t)}(\xX).
\end{equation} 
By the avoidance principle for mean curvature flow, $U_t\cap\Gamma_t=\emptyset$ for all $t\in [-1,T)$. 

Let $d_{\mathcal{C}}$ denote the distance to the cone $\mathcal{C}$. Observe that 
\begin{equation}
\varrho (\xX)=\max\set{\min\set{|\xX|-KR, d_{\mathcal{C}} (\xX)-R^{-1}},0}
\end{equation}
and so the set $U_t^+=\set{\xX\in\mathbb{R}^{n+1}:\varrho (\xX,t)>0}$ satisfies
\begin{equation}
U_t^+=\set{\xX\in\mathbb{R}^{n+1}: |\xX|>KR+\sqrt{2n(t+1)}, \, d_{\mathcal{C}} (\xX)> R^{-1}+\sqrt{2n(t+1)}}.
\end{equation}
Clearly, $U_t^+\subset U_t$ and, in general,  is a proper subset. Hence,
\begin{equation}
\Gamma_t \setminus B_{KR+\sqrt{2n(t+1)}}\subset\mathcal{T}_{R^{-1}+\sqrt{2n(t+1)}}(\mathcal{C}).
\end{equation}
 	
To conclude, pick a unit normal $\nN_{\mathcal{C}}$ to $\mathcal{C}\setminus\set{\OO}$ so there is a smooth map
\begin{equation}
\Psi : \mathcal{C}\setminus\set{\OO}\times\Real\to\Real^{n+1}, \quad \Psi (p,l)=\xX (p)+l\nN_{\mathcal{C}} (p).
\end{equation}
As $\mathcal{L} (\mathcal{C})$, the link of $\mathcal{C}$, is compact and smooth, there is a $\vartheta\in (0,1/2)$ so that if
\begin{equation}
N=\set{(p,l)\in\mathcal{C}\times\Real: |l|<2\vartheta |\xX(p)|},
\end{equation}
then $\Psi\vert_{N}$ is a diffeomorphism onto its image.

Fix $t > -1$ and consider the part of $U_t^c$ far from the origin. That is, set $K^\prime=1+2K\vartheta^{-1}>4K$ and
\begin{equation}
V_t=U_t^c\cap\set{\xX\in\mathbb{R}^{n+1}: |\xX|\geq K^\prime\left(R+\sqrt{2n(t+1)}\right)}.
\end{equation}
If $\yY\in V_t$, then, by the definition of $U_t$ and the fact that $|\yY|\geq K^\prime R>K R$, $d_\mathcal{C} (\yY)\leq R^{-1}+\sqrt{2n(t+1)}$. Hence, there is a point $p\in\mathcal{C}$ so $d_{\mathcal{C}} (\yY)=|\yY-\xX (p)|\leq R^{-1}+\sqrt{2n (t+1)}$. This implies that 
\begin{equation} \label{LowBndLem43}
\begin{split}
|\xX (p)| &\geq K^\prime\left(R+\sqrt{2n(t+1)}\right)-\sqrt{2n(t+1)}-R^{-1}\\
             & \geq (K^\prime-1)\left(R+\sqrt{2n(t+1)}\right)\\ 
 	       & = 2K \vartheta^{-1} \left(R+\sqrt{2n(t+1)}\right),
\end{split}
\end{equation}
where the second inequality used that $R> 1$ and so $R>R^{-1}$. Hence, 
\begin{equation}
\frac{|\yY-\xX(p)|}{|\xX(p)|}\leq \frac{R^{-1}+\sqrt{2n(t+1)}}{2K \vartheta^{-1} \left(R+\sqrt{2n(t+1)}\right)}< \vartheta, 
\end{equation}
where we used that $R>R^{-1}$ and that $K>1$.
 	
Thus, $\yY\in \Psi(N)$ and so, up to reversing the orientation of $\mathcal{C}$, $\yY=\Psi(p, d_{\mathcal{C}}(\yY))=\xX (p)+d_{\mathcal{C}} (\yY)\nN_{\mathcal{C}} (p)$. Set $\yY_0=\xX(p)+\vartheta |\xX (p)|\nN_{\mathcal{C}} (p)$. Observe that
\begin{equation}
|\yY_0|-\left(\vartheta |\xX (p)|-R^{-1}\right)=\left(\sqrt{1+\vartheta^2}-\vartheta\right)|\xX (p)|+R^{-1}>KR,
\end{equation}
where we used that, by \eqref{LowBndLem43}, $|\xX(p)|>4KR$ and $\sqrt{1+\vartheta^2}-\vartheta>1/3$. Thus, 
\begin{equation}
B_{\vartheta |\xX(p)|-R^{-1}}(\yY_0)\cap B_{KR}=\emptyset \quad\mbox{and so} \quad \varrho(\yY_0)=\vartheta|\xX(p)|-R^{-1}.
\end{equation}
As $\yY\notin U_t$, we have $\yY\notin B_{\rho(\yY_0,t)}(\yY_0)$.  That is, $\rho(\yY_0,t)\leq|\yY-\yY_0|=\vartheta |\xX(p)|-d_{\mathcal{C}}(\yY)$. Hence,
\begin{equation}
\begin{split}
d_\mathcal{C} (\yY) & \leq\vartheta |\xX (p)|-\sqrt{(\vartheta |\xX (p)|-R^{-1})^2-2n(t+1)} \\
                                & = \frac{\vartheta^2 |\xX (p)|^2-(\vartheta |\xX (p)|-R^{-1})^2+2n(t+1)}{\vartheta |\xX (p)|+\sqrt{(\vartheta |\xX (p)|-R^{-1})^2-2n(t+1)}}\\
                                & <\frac{2R^{-1} \vartheta |\xX (p)|-R^{-2}+2n(t+1)}{\vartheta |\xX (p)|} \\
                                & <\frac{2}{R}+\frac{2n(t+1)}{\vartheta |\xX (p)|}<\frac{2(1+2n(t+1))}{\vartheta R}, 
\end{split}
\end{equation}
where we used $|\xX (p)|>R$ for the last inequality. Therefore, as $\Gamma_t\subset U_t^c$ and $K^\prime>2/\vartheta$,
\begin{equation}
\Gamma_t\setminus B_{K^\prime\left(R+\sqrt{2n(t+1)}\right)}\subset\mathcal{T}_{K^\prime \left(1+2n(t+1)\right)R^{-1}} (\mathcal{C}),
\end{equation}
from which the result follows.
\end{proof}

In what follows, we establish the existence and uniqueness of a smooth shrinker mean convex mean curvature flow starting from $\Gamma^\epsilon$ at time $-1$. Moreover, we show that each time slice of this flow is smoothly asymptotic to $\mathcal{C} (\Sigma)$ and if the flow exists smoothly beyond time $0$, then the time zero slice lies strictly on one side of $\mathcal{C} (\Sigma)$.

\begin{prop} \label{PerturbedFlowProp}
Let $\Sigma\in\mathcal{ACS}_n^*$ and let $\Gamma^\epsilon$ be as in Proposition \ref{PerturbedInitialSurfaceProp} for $\epsilon\neq 0$. There is a $T=T(\Gamma^\epsilon)\in (-1,\infty]$, and a unique smooth mean curvature flow $\set{\Gamma_t^\epsilon}_{t\in [-1,T)}$ with $\Gamma_{-1}^\epsilon=\Gamma^\epsilon$. Furthermore, there is an increasing function $R(t)>1$ on $[-1,\infty)$ depending only on $\Gamma^\epsilon$ so that:
\begin{enumerate}
\item \label{PFPi} 
for $t\in [-1,T)$, $\Gamma^\epsilon_t$ is smoothly asymptotic to $\mathcal{C} (\Sigma)$ and, in particular, for $p\in\Gamma^\epsilon_t\setminus B_{R(t)}$, 
\begin{equation}
|\xX (p)|\abs{A_{\Gamma^\epsilon_t} (p)}+|\xX (p)|^2\abs{\nabla_{\Gamma^\epsilon_t} A_{\Gamma^\epsilon_t} (p)}+|\xX (p)|^3\abs{\nabla_{\Gamma^\epsilon_t}^2 A_{\Gamma^\epsilon_t} (p)}<\tilde{C}_0,
\end{equation}
where $\tilde{C}_0$ depends only on $\Gamma^\epsilon$;

\item  \label{PFPii} 
if $T<\infty$, then 
\begin{equation}
\lim_{t\to T}\sup_{B_{R(t)}\cap\Gamma_t^\epsilon} \abs{A_{\Gamma_t^\epsilon}}=\infty;
\end{equation}

\item \label{PFPiv} 
for $t<\min\{0,T\}$, $\Gamma^\epsilon_t\setminus\bar{B}_{R(t)}$ is given by the normal graph of a function $f_t$ over a region $\Omega_t\subset\Sigma_t=\sqrt{-t}\, \Sigma$ with $\Sigma_t\setminus B_{2R(t)}\subset\Omega_t$ which satisfies $f_{-1}=\epsilon f|_{\Omega_{-1}}$, 
\begin{equation}
\tilde{C}_1^{-1} |\epsilon| \left(1+|\xX (p)|\right)^{2\mu}< \abs{f_t(p)}<\tilde{C}_1 \left(1+|\xX (p)|\right)^{-1}, \quad\mbox{and}
\end{equation} 
\begin{equation}
\left(1+|\xX (p)|\right)^2 \abs{\nabla_{\Sigma_t} f_t (p)}+\left(1+|\xX (p)|\right)^3 \abs{\nabla^2_{\Sigma_t} f_t (p)}<\tilde{C}_1,
\end{equation}
where $\tilde{C}_1$ depends only on $\Sigma$;

\item  \label{PFPv}
for $t<\min\{0,T\}$, $\Gamma_t^\epsilon\cap\Sigma_t=\emptyset$, and if $T>0$, then $\Gamma^\epsilon_0\cap\mathcal{C} (\Sigma)=\emptyset$;

\item \label{PFPiii} 
by a suitable choice of the normal to $\Gamma^\epsilon$,
\begin{equation}
S^{O}_{\Gamma^\epsilon_t} (p)\geq -\mu |\epsilon| C_0^{-1} \left(1+|\xX (p)|^2+2n(t+1)\right)^{\mu}>0.
\end{equation}
\end{enumerate}
\end{prop}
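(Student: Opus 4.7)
The plan is to proceed in four stages: short-time existence together with the definition of $T$; propagating the conical asymptotic structure as a graph over the self-similar background $\Sigma_t=\sqrt{-t}\,\Sigma$; disjointness from $\Sigma_t$ and $\mathcal{C}(\Sigma)$ via the avoidance principle; and shrinker mean convexity as a direct appeal to Proposition \ref{LowBndSMCProp}. For the first stage, Proposition \ref{PerturbedInitialSurfaceProp}(\ref{PISPii}) guarantees that $\Gamma^\epsilon$ is asymptotically conical with uniformly bounded second fundamental form and derivatives, so the standard short-time existence theory for complete mean curvature flows of bounded curvature (in the spirit of \cite{EH}) produces a unique smooth flow $\{\Gamma^\epsilon_t\}$ on a maximal interval $[-1,T)$, unique within the class of complete flows whose second fundamental form remains uniformly bounded on each compact subinterval. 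Lemma \ref{NearConeLem}, whose cone-decay hypothesis holds by Proposition \ref{PerturbedInitialSurfaceProp}(\ref{PISPv}), together with pseudolocality, prevents curvature from blowing up at spatial infinity, so if $T<\infty$ the blow-up must occur inside a fixed compact set, proving Item (\ref{PFPii}).

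For the asymptotic graph structure (Items (\ref{PFPi}) and (\ref{PFPiv})), I would parametrize $\Gamma^\epsilon_t$, outside a sufficiently large ball $B_{R(t)}$, as the normal graph of a function $f_t$ over $\Sigma_t$ (available for $t<0$ since both surfaces are trapped near $\mathcal{C}(\Sigma)$ by Lemma \ref{NearConeLem}). Rescaling via $s=-\log(-t)$ and $\tilde f(\cdot,s)=(-t)^{-1/2}f_t(\sqrt{-t}\,\cdot)$ converts the graph equation into the rescaled mean curvature flow on the static background $\Sigma$, whose linearization at zero is $L_\Sigma$. Since the initial datum $\tilde f(\cdot,0)=\epsilon f$ is the lowest eigenfunction, its sharp two-sided polynomial bounds \eqref{EigenBndEqn} and \eqref{EigenDerEqn} propagate via weighted parabolic Schauder estimates on the conical end, using the curvature decay of $\Sigma$ from \cite[Lemma 2.1]{WaRigid}; the spectral gap $\mu<-1$ is essential for preserving the decay rate on both sides. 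Differentiating the graph description yields the decay of $|A_{\Gamma^\epsilon_t}|$ and its derivatives asserted in Item (\ref{PFPi}); for $t\geq 0$, where $\Sigma_t$ is no longer a valid background, one replaces it by $\mathcal{C}(\Sigma)$ (using Item (\ref{PFPv})) and argues analogously.

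For Item (\ref{PFPv}), the strict lower bound on $|f_t|$ rules out intersections of $\Gamma^\epsilon_t$ with $\Sigma_t$ on the graph region, while on the complement one invokes the avoidance principle for complete bounded-curvature flows, observing that $\Gamma^\epsilon$ and $\Sigma$ are disjoint at $t=-1$ because $f>0$ and $\epsilon\neq 0$. Disjointness of $\Gamma^\epsilon_0$ from $\mathcal{C}(\Sigma)$ then follows by passing $t\to 0^-$ using smooth convergence $\Sigma_t\to\mathcal{C}(\Sigma)$ on $\R^{n+1}\setminus\{\OO\}$. Finally, Item (\ref{PFPiii}) is a direct application of Proposition \ref{LowBndSMCProp}: the initial bound \eqref{SMCInitialEqn} supplies hypothesis \eqref{InitialSMCEqn} with $c=-\mu|\epsilon|C_0^{-1}$ and $\alpha=-\mu$, the decay from Item (\ref{PFPi}) supplies \eqref{C3BndEqn}, and finite entropy is ensured by Proposition \ref{PerturbedInitialSurfaceProp}(\ref{PISPiv}).

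The main obstacle is the second stage: maintaining sharp two-sided polynomial bounds on $f_t$ uniformly as $t\to 0^-$, when the shrinking background $\Sigma_t$ degenerates to the cone. Standard parabolic theory gives only qualitative existence and bounded-interval estimates, not the sharp eigenfunction-type rate; recovering it requires the rescaled-flow viewpoint, in which $\Sigma$ becomes static and $\epsilon f$ appears as an eigenmode of the linearization, combined with pseudolocality to transport interior smallness of the perturbation out to spatial infinity. A secondary technical point is coordinating the regions on which the graph description over $\Sigma_t$ (for $t<0$) and over $\mathcal{C}(\Sigma)$ (for $t\geq 0$) are valid, which is why $R(t)$ must be allowed to grow with $t$.
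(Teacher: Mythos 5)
Your overall architecture (short-time existence and maximal $T$, pseudolocality plus Lemma \ref{NearConeLem} to push singularities into a compact set, graph over $\Sigma_t$, disjointness, then Proposition \ref{LowBndSMCProp} for Item \eqref{PFPiii}) matches the paper, but the two steps that carry the real content are not actually proved. First, the positive lower bound on $f_t$ and the disjointness $\Gamma^\epsilon_t\cap\Sigma_t=\emptyset$ cannot be obtained the way you propose. The appeal to ``the avoidance principle for complete bounded-curvature flows'' fails here because $\Gamma^\epsilon$ and $\Sigma$ are asymptotic to the \emph{same} cone: since $f$ decays (as $|\xX|^{1+2\mu+2\beta}$ with $\mu+\beta<-1$), the initial separation tends to zero at infinity, which is exactly the situation the noncompact avoidance principle does not cover. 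Likewise, ``weighted parabolic Schauder estimates'' and the rescaled-flow/eigenmode heuristic give at best upper bounds and qualitative control; they do not produce a quantitative lower bound on $|f_t|$ for all $t<\min\{0,T\}$ (note the claimed rate $(1+|\xX|)^{2\mu}$ is not the eigenfunction rate, a sign that propagating \eqref{EigenBndEqn} is not the mechanism). What the paper actually does is derive the parabolic equation \eqref{FtEvEqn} satisfied by the graph function $f_t$ over the moving background $\Sigma_t$, pass to the weighted function $\tilde f_t=f_t\,\eta^{-\mu}$ with $\eta=1+|\xX|^2+2n(t+1)$, and apply the noncompact maximum principle of Theorem \ref{NonCompactMaxPrincipleThm} (plus the parabolic Harnack inequality and the compact maximum principle on $B_{4R(0)}$, run through a continuity argument in $t$ via the time $T_0$) to get the lower bound and disjointness simultaneously; indeed the lower bound in Item \eqref{PFPiv} is a byproduct of the proof of Item \eqref{PFPv}, not an input to it. Your proposal acknowledges this as ``the main obstacle'' but does not supply the argument, so the core of Items \eqref{PFPiv}--\eqref{PFPv} is missing.

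Second, the statement $\Gamma^\epsilon_0\cap\mathcal{C}(\Sigma)=\emptyset$ does not ``follow by passing $t\to 0^-$'': since $\Sigma_t\to\mathcal{C}(\Sigma)$ only in $C^\infty_{loc}(\Real^{n+1}\setminus\{\OO\})$ and disjointness is an open condition, the limit argument only yields $\Gamma^\epsilon_0\cap\mathcal{C}(\Sigma)\subset\{\OO\}$, and touching at the vertex must be excluded separately. The paper does this by observing that if $\Gamma^\epsilon_0\cap\mathcal{C}(\Sigma)=\{\OO\}$, the parabolic blow-up of the smooth flow at $O$ is a static hyperplane, forcing $\Sigma$ into a half-space and contradicting the half-space theorem for self-shrinkers (\cite[Lemma 3.25]{CM} with polynomial volume growth, or \cite{CavalcanteEspinar}); this step is absent from your proposal. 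Two smaller points: the paper obtains the curvature decay of Item \eqref{PFPi} directly from pseudolocality and localized interior estimates (so it holds for all $t\in[-1,T)$ and is an \emph{input} to the graph estimates), whereas your plan derives it from the graph bounds, which risks circularity and needs Item \eqref{PFPv} already for $t\geq 0$; and uniqueness of the flow is obtained in the paper from Chen--Yin's theorem rather than only within a restricted bounded-curvature class.
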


\begin{proof}
First, it follows from Items \eqref{PISPv} and \eqref{PISPii} of Proposition \ref{PerturbedInitialSurfaceProp} that $\Gamma^\epsilon$ smoothly asymptotes to the same regular cone, $\mathcal{C} (\Sigma)$, as that of $\Sigma$. Thus, $\Gamma^\epsilon$ has uniformly bounded $C^m$ norm for all $m$ and $\mathcal{T}_\rho(\Gamma^\epsilon)$ is a regular tubular neighborhood of $\Gamma^\epsilon$ for some $\rho>0$. Arguing as in the proof of \cite[Theorem 4.2]{EHInvent}, there exists a smooth mean curvature flow starting from $\Gamma^\epsilon$ for small positive time. Indeed, the construction ensures that each of these time slices can be expressed as a smooth normal graph over $\Gamma^\epsilon$ with small $L^\infty$ norm. Denote the maximal extension of this flow to a smooth mean curvature flow by $\set{\Gamma^\epsilon_t}_{t\in [-1,T)}$.

Invoking Item \eqref{PISPv} of Proposition \ref{PerturbedInitialSurfaceProp}, it follows from Lemma \ref{NearConeLem} and \cite[Lemma 2.2]{WaRigid} that there exist increasing functions $R(t)>1$ and $C(t)>1$ on $[-1,\infty)$ depending only on $\Gamma^\epsilon$ so that for $t\in (-1,T)$ and $p\in\Gamma^\epsilon_t\setminus B_{R(t)}$, $d_\Sigma (\xX (p))\leq C(t)|\xX (p)|^{-1}$, where $d_\Sigma$ is the distance to $\Sigma$. Furthermore, the pseudo-locality property of mean curvature flow (cf. \cite[Theorem 1.5]{IlmanenNevesSchulze}) together with Item \eqref{PISPii} of Proposition \ref{PerturbedInitialSurfaceProp} (for $\delta>0$ sufficiently small) implies that each slice $\Gamma^\epsilon_\tau\cap B_{r} (q)$ is a connected graph of a function $w_\tau$ over $T_q\Sigma$ with $|Dw_\tau|\leq 2\delta$.  Here,  $\tau\leq t$ and
\begin{equation}\label{Chooseqr}
q\in\Sigma \mbox{ is such that } |\xX (p)-\xX (q)|=d_\Sigma (\xX(p))\mbox{ and } r=\varsigma\kappa |\xX (q)|,
\end{equation}  
for some universal small constant $\varsigma$. Replacing the localization function in the proof of \cite[Propositions 3.21 and 3.22]{EckerBook} with $(1-|\xX|^2)^3_+$, we conclude that, for $m\geq 0$,
\begin{equation} \label{CurvDerEqn}
\sup_{\tau\in [-1,t]}\sup_{\Gamma^\epsilon_\tau\cap B_{\frac{r}{2}} (q)} \abs{\nabla^m_{\Gamma^\epsilon_\tau} A_{\Gamma^\epsilon_\tau}} \leq C^\prime (m,\delta,\Gamma^\epsilon) r^{-m-1}.
\end{equation}
This, together with Item \eqref{PISPv} of Proposition \ref{PerturbedInitialSurfaceProp} and Lemma \ref{NearConeLem}, immediately gives Item \eqref{PFPi} and allows us to invoke \cite[Theorem 1.1]{ChenYin} to establish that the flow $\set{\Gamma^\epsilon_t}_{t\in [-1,T)}$ is the unique smooth mean curvature flow starting from $\Gamma^\epsilon$ at time $-1$. Hence, it remains only to verify 
Items \eqref{PFPii}, \eqref{PFPiv}, \eqref{PFPv} and \eqref{PFPiii}.

To show Item \eqref{PFPii}, we argue by contradiction. If $T<\infty$ and
\begin{equation}
\sup_{t\in [-1,T)}\sup_{\Gamma^\epsilon_t} \abs{A_{\Gamma^\epsilon_t}}<\infty,
\end{equation}
then the same holds true for all higher order derivatives of the second fundamental form by \cite[Theorem 3.7]{EHInvent}. As $\Gamma^\epsilon$ decays smoothly to $\mathcal{C} (\Sigma)$, it follows from Item \eqref{PFPi} that $\Gamma^\epsilon_t$ for $t<T$ smoothly decays to $\mathcal{C} (\Sigma)$ in a uniform manner. Thus, $\Gamma^\epsilon_t\to\Gamma^\epsilon_T$ smoothly and $\Gamma^\epsilon_T$ is a hypersurface in $\mathbb{R}^{n+1}$ smoothly decaying to $\mathcal{C}(\Sigma)$. By our previous discussion, we could extend smoothly the flow $\set{\Gamma^\epsilon_t}_{t\in [-1,T)}$ to a larger time interval $[-1,T^\prime)$ for some $T^\prime>T$. This contradicts the maximality of $T$.

By \cite[Lemma 2.2]{WaRigid}, for $\tau\leq t<\min\set{0,T}$, $p\in \Sigma_\tau$ and $q$ and $r$ as in \eqref{Chooseqr},  $\Sigma_\tau\cap B_r(q)$ is the connected graph of a function $\tilde{w}_\tau$ over $T_q\Sigma$ with  $r^{m-1}|D^m\tilde{w}_\tau|$ uniformly bounded for $1\leq m\leq 3$. Below, let $C^{\prime\prime}=C^{\prime\prime} (\delta,\Sigma,\Gamma^\epsilon)$ be a sufficiently large constant which may change among lines. In view of \eqref{EigenBndEqn} and \eqref{EigenDerEqn}, it is straightforward to check that 
\begin{equation} \label{InitialDifferenceEqn}
\sum_{m=0}^2 r^{-1-2\mu-2\beta+m} \norm{D^m w_{-1}-D^m \tilde{w}_{-1}}_{L^\infty} \leq C^{\prime\prime}.
\end{equation}
Thus, combining \eqref{CurvDerEqn} and \eqref{InitialDifferenceEqn}, it follows from the mean curvature flow equation that, as long as $\beta$ is small enough so that $\mu+\beta<-1$, for $t<\min\set{0,T}$,
\begin{equation}
\sum_{m=0}^2 r^{m+1} \norm{D^m w_t-D^m \tilde{w}_t}_{L^\infty} \leq C^{\prime\prime}.
\end{equation}
Hence, using similar reasoning as in the proof of \cite[Lemma 2.3]{WaRigid}, one verifies all of Item \eqref{PFPiv} except the lower bound on $|f_t|$, which will be a consequence of our proof of Item \eqref{PFPv}.

Next, by symmetry, it suffices to show Item \eqref{PFPv} for $\epsilon>0$ and so $f_{-1}=\epsilon f>0$. Let 
\begin{equation}
T_0=\sup\set{t_0<\min\set{0,T}: \Gamma^\epsilon_t\cap\Sigma_t=\emptyset \, \, \mbox{for all $t<t_0$}}.
\end{equation}
We will show that $T_0=\min\set{0,T}$, implying the first part of Item \eqref{PFPv}, i.e., $\Gamma^\epsilon_t\cap\Sigma_t=\emptyset$ when $t<\min\set{0,T}$. First, we prove that $\Gamma^\epsilon_t$ remains disjoint from $\Sigma_t$ for $t$ close to $-1$. By continuity, there is a $\delta^\prime>0$ small so that $\Gamma^\epsilon_t\cap\Sigma_t\cap\bar{B}_{2R(0)}=\emptyset$ for all $t<-1+\delta^\prime$. In particular, if $t<-1+\delta^\prime$, then $f_t$ restricted to $\Sigma_t\cap\partial B_{2R(0)}$ is strictly positive. As both $\Gamma^\epsilon_t$ and $\Sigma_t$ move by mean curvature, the equation for $f_t$ is given by a perturbation of the linearized mean curvature flow equation and thus, invoking \cite[Lemma 2.1]{WaRigid} and Item \eqref{PFPiv}, we get that
\begin{equation} \label{FtEvEqn}
\left(\frac{d}{dt}-\Delta_{\Sigma_t}\right) f_t=\mathbf{a}\cdot\nabla_{\Sigma_t} f_t+b f_t,
\end{equation}
where $\mathbf{a}$ and $b$ are uniformly bounded and depend on $\Sigma$, $f_t$, $\nabla_{\Sigma_t} f_t$, and $\nabla^2_{\Sigma_t} f_t$. The derivation of \eqref{FtEvEqn} involves lengthy but tedious computations for which we refer the reader to \cite[Lemma 2.5]{Sesum}. Define
\begin{equation}
\tilde{f}_t (p)=f_t (p)\eta^{-\mu} (\xX (p),t)=f_t (p)\left(1+|\xX (p)|^2+2n (t+1)\right)^{-\mu}.
\end{equation}
Then it is easy to deduce that
\begin{equation}
\left(\frac{d}{dt}-\Delta_{\Sigma_t}\right) \tilde{f}_t=\tilde{\mathbf{a}}\cdot\nabla_{\Sigma_t} \tilde{f}_t+\tilde{b}\tilde{f}_t,
\end{equation}
where 
\begin{equation}
\tilde{\mathbf{a}}=\mathbf{a}+2\mu\nabla_{\Sigma_t}\log\eta\quad\mbox{and}\quad
\tilde{b}=b+\mu (\mu-1)\abs{\nabla_{\Sigma_t}\log\eta}^2+\mu\mathbf{a}\cdot\nabla_{\Sigma_t} \log\eta.
\end{equation}
Thus, together with \eqref{EigenBndEqn} (choosing $\beta=1/2$), it follows from Theorem \ref{NonCompactMaxPrincipleThm} that, for $t<-1+\delta^\prime$, $\tilde{f}_t$ in $\Sigma_t\setminus B_{2R(0)}$ is bounded from below by a positive constant. In particular, this implies that $T_0\geq -1+\delta^\prime$.

Assume that $T_0<\min\set{0,T}$. Invoking Item \eqref{PFPiv} and \cite[Lemma 2.1]{WaRigid} again, it follows from the parabolic Harnack inequality \cite[Corollary 7.42]{Lieberman} that $\tilde{f}_t$ restricted on $\Sigma_t\setminus B_{4R(0)}$ for $t\leq T_0$ is also bounded from below by a positive constant. Thus, by the parabolic maximum principle on bounded domains, we conclude that $\Gamma^\epsilon_{T_0}\cap\Sigma_{T_0}=\emptyset$. Hence, we repeat the arguments in the previous paragraph to see that $\Gamma^\epsilon_t\cap\Sigma_t=\emptyset$ for $t\in [T_0,T^\prime_0]$ and $T_0^\prime>T_0$. This contradicts the definition of $T_0$. Therefore, $T_0=\min\set{0,T}$, i.e., $\Gamma^\epsilon_t\cap\Sigma_t=\emptyset$ for $t<\min\set{0,T}$.

To conclude, we note that, as $\Sigma_t \to\mathcal{C} (\Sigma)$ in $C^\infty_{loc}(\Real^{n+1}\setminus\set{\OO})$ as $t\to 0$, we must have $\Gamma_0^\epsilon\cap\mathcal{C} (\Sigma)\subset \set{\OO}$ by the previous discussion and parabolic maximum principle on bounded domains. However, if $\Gamma_0^\epsilon\cap\mathcal{C} (\Sigma)=\set{\OO}$, then, as $\set{\Gamma^\epsilon_t}_{t\in [-1,0]}$ is smooth, its parabolic blow-up at $O$ would be a static hyperplane and so  $\Sigma$ would lie in a half-space whose boundary is this hyperplane.   By \cite[Lemma 3.25]{CM} and the fact that $\Sigma$ has polynomial volume growth, this could occur only if $\Sigma$ was the hyperplane -- see also \cite[Theorem 1.1]{CavalcanteEspinar}. This contradicts $\Sigma\in\mathcal{ACS}_n^*$ and shows Item \eqref{PFPv}. Finally, Item \eqref{PFPiii} follows from Proposition \ref{LowBndSMCProp}, Item \eqref{PFPi} and Item \eqref{PISPiii} of Proposition \ref{PerturbedInitialSurfaceProp}.
\end{proof}

 Following the arguments in \cite{CIMW}, we show that, for elements of $\mathcal{ACS}^*_n$ with small entropy, the flow constructed in Proposition \ref{PerturbedFlowProp} exists for long time.  A crucial ingredient in our proof is the fact that shrinker mean convexity is preserved under mean curvature flow -- i.e., Item \eqref{PFPiii} of Proposition \ref{PerturbedFlowProp}.

\begin{prop} \label{SingularProp}
If $\Sigma\in\mathcal{ACS}^*_n[\lambda_{n-1}]$ and, for $\epsilon\neq 0$, $\set{\Gamma^\epsilon_t}_{t\in [-1,T)}$ is given by Proposition \ref{PerturbedFlowProp}, then $T=\infty$.
\end{prop}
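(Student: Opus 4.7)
The plan is to argue by contradiction: assume $T<\infty$ and use the preserved positivity of the shrinker mean curvature to produce a tangent flow at the ensuing singularity whose Gaussian density violates the strict entropy bound $\lambda[\Gamma^\epsilon]<\lambda[\Sigma]\leq\lambda_{n-1}$ coming from Item \eqref{PISPiv} of Proposition \ref{PerturbedInitialSurfaceProp}.

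First I would localize the singularity. By Item \eqref{PFPi} of Proposition \ref{PerturbedFlowProp}, $|A_{\Gamma^\epsilon_t}|$ is uniformly bounded on $\Gamma^\epsilon_t\setminus B_{R(t)}$ for each $t$, so Item \eqref{PFPii} forces $|A|$ to blow up inside $B_{R(T)}$ as $t\to T$. Pick a space-time point $X_0=(\xX_0,T)$ with $\xX_0\in\bar B_{R(T)}$ at which a singularity forms. Huisken's monotonicity formula then yields a non-trivial tangent flow $\hat{\mathcal S}$ at $X_0$, which is a self-similarly shrinking integral Brakke flow. Since $\Theta(X_0)\leq\lambda[\Gamma^\epsilon]<\lambda_{n-1}<2$ by Stone's bound, this tangent flow has multiplicity one and corresponds to a smooth, properly embedded, non-flat self-shrinker $\hat\Sigma\subset\Real^{n+1}$ with $\lambda[\hat\Sigma]<\lambda_{n-1}$.

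Next I would transfer the strict positivity $S^O_{\Gamma^\epsilon_t}>0$ (Item \eqref{PFPiii} of Proposition \ref{PerturbedFlowProp}) to ordinary mean convexity of $\hat\Sigma$. Under the parabolic rescaling $\hat\xX=\rho(\xX-\xX_0)$ and $\hat t=\rho^2(t-T)$, with $\hat H=\rho^{-1}H$ and $\hat\nN=\nN$, a direct substitution into the definition of $S^O$ gives on the rescaled flow
\begin{equation}
-2T\rho\,\hat H-\xX_0\cdot\hat\nN+O(\rho^{-1})\geq 0.
\end{equation}
Dividing by $\rho$ and sending $\rho\to\infty$ yields $T\hat H\leq 0$ on $\hat\Sigma$, so $\hat\Sigma$ is mean convex with respect to an appropriate unit normal whenever $T\neq 0$. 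The borderline case $T=0$ requires separate care; one can handle it by a small time shift of the center together with Lemma \ref{SMCEvLem} to propagate positivity away from zero. I view this transfer, and in particular the $T=0$ case, as the main technical obstacle of the proof.

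With $\hat\Sigma$ a mean convex, smoothly embedded self-shrinker of polynomial volume growth, the Colding--Minicozzi classification \cite[Theorem 0.17]{CM} implies that $\hat\Sigma$ is a hyperplane, the round sphere $\mathbb{S}^n_*$, or a generalized cylinder $\mathbb{S}^k_*\times\Real^{n-k}$ with $1\leq k\leq n-1$. The hyperplane is excluded by Brakke--White regularity, which would force $X_0$ to be a regular point contradicting $|A|\to\infty$. Each cylinder $\mathbb{S}^k_*\times\Real^{n-k}$ with $k\leq n-1$ has entropy $\lambda_k\geq\lambda_{n-1}$, contradicting $\lambda[\hat\Sigma]<\lambda_{n-1}$. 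Hence $\hat\Sigma=\mathbb{S}^n_*$. But if the tangent flow at $X_0$ were a shrinking $n$-sphere, then for $t<T$ sufficiently close to $T$, Brakke--White regularity would produce a closed connected component of $\Gamma^\epsilon_t$ lying in a small neighborhood of $\xX_0$ and smoothly close to $\sqrt{2n(T-t)}\,\mathbb{S}^n$. However, by Item \eqref{PFPi} of Proposition \ref{PerturbedFlowProp}, each $\Gamma^\epsilon_t$ is smoothly asymptotic to the connected cone $\mathcal{C}(\Sigma)$ and so contains a non-compact end; since $\Gamma^\epsilon_{-1}$ is connected and the flow is smooth on $[-1,T)$, topology is preserved and $\Gamma^\epsilon_t$ remains connected for all $t<T$. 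A single connected hypersurface cannot simultaneously contain a closed bounded component and a non-compact end, and this final contradiction forces $T=\infty$.
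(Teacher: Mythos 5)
Your overall strategy --- blow up at a hypothetical finite-time singularity, transfer shrinker mean convexity to the tangent flow, classify it, and exclude each candidate --- is the same as the paper's, but two steps have genuine gaps, and both occur precisely where the paper invokes Proposition \ref{CurvBndSMCProp}, which your argument never uses. First, smoothness of the tangent flow does not follow from $\Theta(X_0)\leq\lambda[\Gamma^\epsilon]<\lambda_{n-1}<2$: an entropy bound below $2$ rules out higher multiplicity but not singularities of the limiting $F$-stationary varifold. For $n\geq 3$ the paper gets smoothness from \cite[Proposition 5.1]{CIMW}, which requires entropy strictly below $3/2$ (available since $\lambda_{n-1}\leq\lambda_2<3/2$); but for $n=2$ one has $\lambda_1>3/2$, so that criterion fails, and the paper instead uses the curvature estimate of Proposition \ref{CurvBndSMCProp} in rescaled form (inequality \eqref{CurvBndEqn}) together with \eqref{L2ShrinkerEqn} to obtain local $L^2$ bounds on $\abs{A}$, then \cite[Lemma 4]{SimonWillmore} to make the singular set discrete and Allard's theorem (using multiplicity one) to remove it. Since $n=2$ is exactly the case needed for the paper's main application, you cannot simply assert that the tangent flow is a smooth embedded self-shrinker.

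Second, the $T=0$ case, which you yourself flag as unresolved, is a genuine gap, and the fix you sketch does not work as stated: shifting the center in time replaces the quantity by $S^{(\OO,\delta)}_{\Gamma^\epsilon_t}=S^{O}_{\Gamma^\epsilon_t}+2\delta H_{\Gamma^\epsilon_t}$, and since no sign of $H_{\Gamma^\epsilon_t}$ is known, positivity of the shifted quantity at time $-1$ (which is what Proposition \ref{LowBndSMCProp} would need) is not available. Your passage to the limit of the bare inequality $S^{O}\geq 0$ degenerates to $0\geq 0$ when $T=0$. The paper closes this case by using the stronger output of Proposition \ref{CurvBndSMCProp}: along the rescaled flow one has the pointwise bound \eqref{CurvBndEqn}, which in the limit gives $\abs{A_{\tilde{\Sigma}}}\leq -2CT H_{\tilde{\Sigma}}$; when $T=0$ this forces the tangent flow to be a multiplicity-one plane, contradicting the fact that the Gaussian density at a singular point exceeds $1$. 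Once you import that quantitative curvature bound, both gaps close, and the remainder of your argument (cylinders excluded by entropy, the sphere excluded by connectedness and non-compactness of the rescaled hypersurfaces, the plane excluded by regularity) matches the paper's proof.
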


\begin{proof}
We argue by contradiction. If $T<\infty$, then Item \eqref{PISPiv} of Proposition \ref{PerturbedInitialSurfaceProp}, Item \eqref{PFPii} of Proposition \ref{PerturbedFlowProp} and Brakke's local regularity theorem \cite{B} or \cite{WhiteReg} imply that there is an $\xX_0\in \bar{B}_{R(T)}$ so that the corresponding rescaled flow about $X_0=(\xX_0,T)$, 
\begin{equation}
\tilde{\Gamma}^\epsilon_s=(T-t)^{-\frac{1}{2}}\left(\Gamma^\epsilon_t-\xX_0\right),\quad s=-\log (T-t),
\end{equation}
satisfies that for some sequence $s_i\to\infty$, $\tilde{\Gamma}^\epsilon_{s_i}$ converges to a multiplicity one $F$-stationary varifold $\tilde{\Sigma}$ with $1<\lambda [\tilde{\Sigma}]<\lambda_{n-1}$ and
\begin{equation} \label{L2ShrinkerEqn}
\int_{\tilde{\Gamma}^\epsilon_{s_i}} \abs{\mathbf{H}_{\tilde{\Gamma}^\epsilon_{s_i}}+\frac{\xX^\perp}{2}}^2 e^{-\frac{|\xX|^2}{4}} d\mathcal{H}^n\to 0.
\end{equation}

Invoking Items \eqref{PISPii} and \eqref{PISPiii} of Proposition \ref{PerturbedInitialSurfaceProp} and Item \eqref{PFPi} of Proposition \ref{PerturbedFlowProp}, it follows from Proposition \ref{CurvBndSMCProp} that on $\tilde{\Gamma}^\epsilon_{s_i}\cap B_{R_i}$ for $R_i=R(T) e^{s_i/2}$,
\begin{equation} \label{CurvBndEqn}
\abs{A_{\tilde{\Gamma}^\epsilon_{s_i}}}<C(\Sigma,\Gamma^\epsilon,T)\left\{2\left(e^{-s_i}-T\right)H_{\tilde{\Gamma}^\epsilon_{s_i}}-e^{-\frac{s_i}{2}}\left(\xX_0+e^{-\frac{s_i}{2}}\xX\right)\cdot\nN_{\tilde{\Gamma}^\epsilon_{s_i}}\right\}.
\end{equation}
Passing $s_i\to\infty$, this together with Brakke's local regularity theorem implies that 
\begin{equation} \label{CurvRegEqn}
\abs{A_{\tilde{\Sigma}}} \leq -2CTH_{\tilde{\Sigma}}\quad\mbox{on the regular set $\mbox{Reg}(\tilde{\Sigma})$}.
\end{equation}
If $n\geq 3$, then $\lambda_{n-1}<3/2$ and so $\tilde{\Sigma}$ is smoothly embedded by \cite[Proposition 5.1]{CIMW}. If $n=2$, it follows from \eqref{L2ShrinkerEqn} and \eqref{CurvBndEqn} that
\begin{equation}
\int_{\tilde{\Gamma}^\epsilon_{s_i}\cap B_R} \abs{A_{\tilde{\Gamma}^\epsilon_{s_i}}}^2 e^{-\frac{|\xX|^2}{4}} d\mathcal{H}^2 < C^\prime (\lambda_1,R,T,C).
\end{equation}
Thus, in view of \cite[Lemma 4]{SimonWillmore}, the singular set $\mbox{Sing}(\tilde{\Sigma})$ is discrete. Furthermore, as $\tilde{\Sigma}$ is of multiplicity one, it follows from Allard's regularity theorem that $\mbox{Sing}(\tilde{\Sigma})=\emptyset$.

If $T=0$, then $\tilde{\Sigma}$ is flat and of multiplicity one and hence, $\lambda [\tilde{\Sigma}]=1$, contradicting $\lambda [\tilde{\Sigma}]>1$. If $T\neq 0$, then $H_{\tilde{\Sigma}}$ does not change sign and, as $F[\tilde{\Sigma}]<\lambda_{n-1}$, it follows from \cite[Theorem 0.14]{CM} that $\tilde{\Sigma}$ is the self-shrinking sphere or a hyperplane. As $\tilde{\Gamma}^\epsilon_{s_i}$ is connected and non-compact, $\tilde{\Sigma}$ must be also non-compact and so $\tilde{\Sigma}$ cannot be a sphere. Hence, $\tilde{\Sigma}$ must be a hyperplane, yielding a contradiction as in the previous case.
\end{proof}

We will need the following straightforward consequence of Propositions \ref{PerturbedFlowProp} and \ref{SingularProp} in the next section.
\begin{thm} \label{StarshapThm}
Let $\Sigma\in\mathcal{ACS}^*_n[\lambda_{n-1}]$ and let $\Gamma^\epsilon$ be as in Proposition \ref{PerturbedInitialSurfaceProp} for $\epsilon\neq 0$. There is a unique smooth mean curvature flow $\set{\Gamma^\epsilon_t}_{t\in [-1,0]}$ with $\Gamma^\epsilon_{-1}=\Gamma^\epsilon$. Moreover, $\Gamma^\epsilon_0$ is a connected hypersurface which is smoothly asymptotic to, but disjoint from, $\mathcal{C}(\Sigma)$, and which satisfies $\xX\cdot\nN_{\Gamma^\epsilon_0}>0$. 
\end{thm}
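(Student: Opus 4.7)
The plan is to assemble the earlier propositions and read off their conclusions at the distinguished time $t=0$; there is essentially no new analytic content beyond observing that $t_0 = 0$ makes the shrinker mean curvature collapse to the star-shaped quantity.

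First I would invoke Proposition \ref{PerturbedFlowProp} to produce a unique smooth mean curvature flow $\{\Gamma^\epsilon_t\}_{t\in[-1,T)}$ with $\Gamma^\epsilon_{-1}=\Gamma^\epsilon$, and then Proposition \ref{SingularProp}, whose hypothesis $\lambda[\Sigma]\le\lambda_{n-1}$ is exactly what we assume, to upgrade $T$ to $\infty$. Restricting to $[-1,0]$ gives the desired flow, and the uniqueness statement is inherited from Proposition \ref{PerturbedFlowProp}. For connectedness, Item \eqref{PISPi} of Proposition \ref{PerturbedInitialSurfaceProp} writes $\Gamma^\epsilon$ as a normal graph over the connected self-shrinker $\Sigma$, hence $\Gamma^\epsilon$ is connected; smoothness of the flow on the compact interval $[-1,0]$ preserves diffeomorphism type, so $\Gamma^\epsilon_0$ is connected.

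Next, the asymptotic behavior of $\Gamma^\epsilon_0$ is the content of Item \eqref{PFPi} of Proposition \ref{PerturbedFlowProp} at time $0$: outside $B_{R(0)}$ we have the uniform bounds $|\xX||A|+|\xX|^2|\nabla A|+|\xX|^3|\nabla^2 A|<\tilde C_0$, so $\Gamma^\epsilon_0$ is smoothly asymptotic to the cone $\mathcal{C}(\Sigma)$. The disjointness $\Gamma^\epsilon_0\cap\mathcal{C}(\Sigma)=\emptyset$ is precisely the second clause of Item \eqref{PFPv} of Proposition \ref{PerturbedFlowProp}, which applies because $T=\infty>0$.

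Finally, for the star-shapedness $\xX\cdot\nN_{\Gamma^\epsilon_0}>0$: by Item \eqref{PFPiii} of Proposition \ref{PerturbedFlowProp}, with the preferred orientation we have, for all $(p,t)$ in the space-time track,
\begin{equation*}
S^{O}_{\Gamma^\epsilon_t}(p) \;=\; -2t\,H_{\Gamma^\epsilon_t}(p)\;-\;\xX(p)\cdot\nN_{\Gamma^\epsilon_t}(p)\;>\;0.
\end{equation*}
Specializing to $t=0$ makes the curvature term drop out, leaving $-\xX(p)\cdot\nN_{\Gamma^\epsilon_0}(p)>0$; reversing the choice of unit normal on $\Gamma^\epsilon_0$ then yields $\xX\cdot\nN_{\Gamma^\epsilon_0}>0$, as required. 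The only ``obstacle'' worth naming is the bookkeeping of orientations (the sign convention for $\nN$ used in defining $S^O$ is opposite to the one used in the star-shaped statement), but this is purely a matter of replacing $\nN$ by $-\nN$ at the end.
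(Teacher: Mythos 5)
Your proposal is correct and matches the paper's intent exactly: the paper states this theorem as a straightforward consequence of Propositions \ref{PerturbedFlowProp} and \ref{SingularProp} without a separate proof, and your argument simply spells out that reduction, including the key observation that at $t=0$ the shrinker mean curvature relative to $O$ reduces to $-\xX\cdot\nN_{\Gamma^\epsilon_0}$, so Item \eqref{PFPiii} gives star-shapedness after flipping the orientation. No gaps.
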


\section{Topology of Non-Compact Shrinkers of Small Entropy} \label{FinalSec}
We first observe the following topological fact which essentially says that the hypersurfaces $\Gamma^\epsilon_0$ from Theorem \ref{StarshapThm} are star-shaped relative to $\OO$.

\begin{prop} \label{RadialGraphTopProp}
Let $\Sigma\subset \Real^{n+1}$ be a connected hypersurface which is smoothly asymptotic to a regular cone $\mathcal{C}(\Sigma)$. If $\xX\cdot \nN_{\Sigma}>0$ and $\Sigma\cap \mathcal{C}(\Sigma)=\emptyset$, then the map $\Pi:\Sigma\to \mathbb{S}^n$ given by
\begin{equation}
\Pi(p)=\frac{\xX(p)}{|\xX(p)|}
\end{equation}
is a diffeomorphism onto its image. Furthermore, the image is a connected component of $\mathbb{S}^n\backslash \mathcal{L}(\Sigma)$, where $\mathcal{L}(\Sigma)$ is the link of $\mathcal{C}(\Sigma)$.
\end{prop}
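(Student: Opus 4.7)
The plan is to verify that $\Pi$ is (i) a smooth local diffeomorphism, (ii) proper onto $\mathbb{S}^n\setminus\mathcal{L}(\Sigma)$, and (iii) injective. These three facts together will show that $\Pi$ is a smooth embedding of $\Sigma$ whose image is open in $\mathbb{S}^n$ (by (i)), closed in $\mathbb{S}^n\setminus\mathcal{L}(\Sigma)$ (by (ii)), and connected (since $\Sigma$ is), hence a connected component of $\mathbb{S}^n\setminus\mathcal{L}(\Sigma)$.

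I would begin by noting $\OO\notin\Sigma$, since otherwise $\xX\cdot\nN_\Sigma$ would vanish at $\OO$, so $\Pi$ is smooth. For (i), I compute the kernel of $d\Pi_p:T_p\Sigma\to T_{\Pi(p)}\mathbb{S}^n$: a vector $v\in T_p\Sigma$ lies in $\ker d\Pi_p$ precisely when $v=\lambda\xX(p)$ for some $\lambda\in\Real$. Pairing with $\nN_\Sigma$ and using $\xX\cdot\nN_\Sigma>0$ forces $\lambda=0$, so $d\Pi_p$ is injective and hence, by dimension, an isomorphism.

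For (ii), I would exploit that $\Sigma$ is smoothly asymptotic to $\mathcal{C}(\Sigma)$ and disjoint from it. If $\omega\in\mathcal{L}(\Sigma)$, the entire ray $\{r\omega:r>0\}$ lies in $\mathcal{C}(\Sigma)$ and therefore misses $\Sigma$, so $\omega\notin\Pi(\Sigma)$. Conversely, if $p_i\in\Sigma$ satisfy $|\xX(p_i)|\to\infty$, then setting $\rho_i=1/|\xX(p_i)|$ and using $\rho_i\Sigma\to\mathcal{C}(\Sigma)$ in $C^\infty_{loc}(\Real^{n+1}\setminus\{\OO\})$ shows that any subsequential limit of $\Pi(p_i)=\rho_i\xX(p_i)$ lies in $\mathcal{L}(\Sigma)$. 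Combining these, $\Pi^{-1}(K)$ is compact for every compact $K\subset\mathbb{S}^n\setminus\mathcal{L}(\Sigma)$, so $\Pi$ is proper onto $\mathbb{S}^n\setminus\mathcal{L}(\Sigma)$.

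The key step, and main obstacle, is injectivity (iii). Here I would use that $\Sigma$, being a connected, orientable, properly embedded hypersurface in $\Real^{n+1}$, separates $\Real^{n+1}$ into exactly two open components $U^\pm$; this follows from a Mayer--Vietoris computation using a tubular neighborhood of $\Sigma$ and its complement. Choose $\nN_\Sigma$ so that it points into $U^+$. For any $\omega\in\mathbb{S}^n$ and any intersection $p=r\omega$ of the ray through $\omega$ with $\Sigma$, the hypothesis gives
\begin{equation*}
\omega\cdot\nN_\Sigma(p)=|\xX(p)|^{-1}\,\xX(p)\cdot\nN_\Sigma(p)>0,
\end{equation*}
so the ray crosses $\Sigma$ transversally at $p$ from $U^-$ into $U^+$. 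Two such crossings are impossible: after the first, the ray would be in $U^+$, and to reach a second crossing it would need to return to $U^-$ without intersecting $\Sigma$, contradicting $\Real^{n+1}\setminus\Sigma=U^-\sqcup U^+$. This yields $|\Pi^{-1}(\omega)|\le 1$, completing the proof.
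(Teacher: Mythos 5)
Your proposal is correct, and its skeleton (local diffeomorphism from $\xX\cdot\nN_\Sigma>0$, properness from the asymptotics and $\Sigma\cap\mathcal{C}(\Sigma)=\emptyset$, injectivity, then ``open, closed in $\mathbb{S}^n\setminus\mathcal{L}(\Sigma)$, and connected'' to identify the image) matches the paper's, but the two decisive steps are carried out by genuinely different means. For injectivity the paper fixes two consecutive preimages $p,p^\prime$ of a point $q$ on the same ray, pushes them off to $\tilde{p},\tilde{p}^\prime$, and compares the ray segment $\overline{\tilde{p}\tilde{p}^\prime}$ (which meets $\Sigma$ transversally exactly once) with a path obtained by pushing a curve in $\Sigma$ off along $\delta\nN_\Sigma$ (which misses $\Sigma$); since the two paths are homotopic in $\Real^{n+1}$, this contradicts homotopy invariance of the mod $2$ intersection number with the properly embedded $\Sigma$. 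You instead invoke Jordan--Brouwer separation for the connected, properly embedded, two-sided $\Sigma$ (via Mayer--Vietoris --- a fact the paper itself uses later, in the proof of Theorem \ref{MainACSThm}) and note that every transversal crossing of a radial ray goes from $U^-$ into $U^+$, so each ray meets $\Sigma$ at most once; both arguments rest on the same underlying topology, but yours bypasses the paper's covering-map step and the pushed-off curve construction, at the price of the separation lemma. Likewise, for the last claim the paper shows $\Omega\subset\hat{\Omega}\cap\mathbb{S}^n$ and $\partial\Omega\subset\mathcal{L}(\Sigma)$, whereas you get closedness of the image in $\mathbb{S}^n\setminus\mathcal{L}(\Sigma)$ directly from properness of $\Pi$ as a map into $\mathbb{S}^n\setminus\mathcal{L}(\Sigma)$; both are fine. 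Two small points you should make explicit when writing this up: that $\nN_\Sigma$ points into the same global component $U^+$ at every point of $\Sigma$ (an open-and-closed argument using connectedness of $\Sigma$), and that transversality together with properness of $\Sigma$ makes the intersection of a ray with $\Sigma$ discrete, so consecutive crossings exist and the subsegment between them avoids $\Sigma$.
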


\begin{proof}
Let $\Omega=\Pi(\Sigma)$ and observe that, as $\Pi(\mathcal{C}(\Sigma))=\mathcal{L}(\Sigma)$ and $\Sigma\cap \mathcal{C}(\Sigma)=\emptyset$, $\Omega\cap \mathcal{L}(\Sigma)=\emptyset$. It follows that $\Pi$ is a proper map. Indeed, suppose that $\mathcal{K}\subset\Omega$ is compact. As $\mathcal{K}\cap\mathcal{L}(\Sigma)=\emptyset$ and $\Sigma$ is smoothly asymptotic to $\mathcal{C}(\Sigma)$, there is an $R=R(\mathcal{K})>0$ so that $\Pi^{-1}(\mathcal{K})\subset B_{R}$.  As $\Sigma$ is a hypersurface, and so proper, $\Sigma\cap\bar{B}_R$ is compact and so $\Pi^{-1}(\mathcal{K})\cap B_{R}$ is also compact.

Next notice that, as $\xX\cdot\nN_{\Sigma}>0$ and
\begin{equation}
\mathrm{d}\Pi_p(\vV)=\frac{\vV}{|\xX(p)|}-\frac{\xX(p) \left(\xX(p)\cdot\vV\right)}{|\xX(p)|^3} \quad\mbox{for $\vV\in T_p\Sigma$},
\end{equation}
$\mathrm{d}\Pi_p$ is bijective for all $p\in\Sigma$. Hence, $\Pi$ is a local diffeomorphism and so $\Omega$ is an open subset of $\mathbb{S}^n$.   As $\Omega$ is the image of a connected set, $\Omega$ itself must be connected and hence, by a standard topological result, $\Pi$ is a finite covering map.

We now show that $\Pi$ is a diffeomorphism. To see this, fix a $q\in \Omega$ and suppose that $p,p^\prime\in\Pi^{-1}(q)$ are two consecutive points in the pre-image of $q$ -- here we order by distance from $\OO$. For $\delta>0$ sufficiently small, let $\tilde{p}=\xX(p)+\delta\xX(q)$ and likewise define $\tilde{p}^\prime$. Invoking that $\xX\cdot\nN_{\Sigma}>0$, the straight line segment $\overline{\tilde{p}\tilde{p}^\prime}$ connecting $\tilde{p}$ to $\tilde{p}^\prime$ intersects transversally with $\Sigma$ exactly once. Now let $\Pi_\Sigma$ be the nearest point projection onto $\Sigma$ and let $\gamma: [0,1]\to\Sigma$ be a simple, smooth parametrized curve connecting $\Pi_\Sigma(\tilde{p})$ to $\Pi_\Sigma(\tilde{p}^\prime)$. Consider the curve $\tilde{\gamma}: [0,1]\to\mathbb{R}^{n+1}$ defined by 
\begin{equation}
\tilde{\gamma}(t)=\xX (\gamma(t))+\delta\nN_{\Sigma} (\gamma(t)).
\end{equation}
Clearly, $\overline{\tilde{p}\tilde{\gamma}(0)}\cup\tilde{\gamma}\cup\overline{\tilde{\gamma}(1)\tilde{p}^\prime}$ is homotopic to $\overline{\tilde{p}\tilde{p}^\prime}$ in $\mathbb{R}^{n+1}$, but does not intersect $\Sigma$. This is a contradiction and completes the proof that $\Pi$ is a diffeomorphism of $\Sigma$ onto $\Omega$.

\begin{figure}
 \centering
\resizebox{4in}{!}{\input{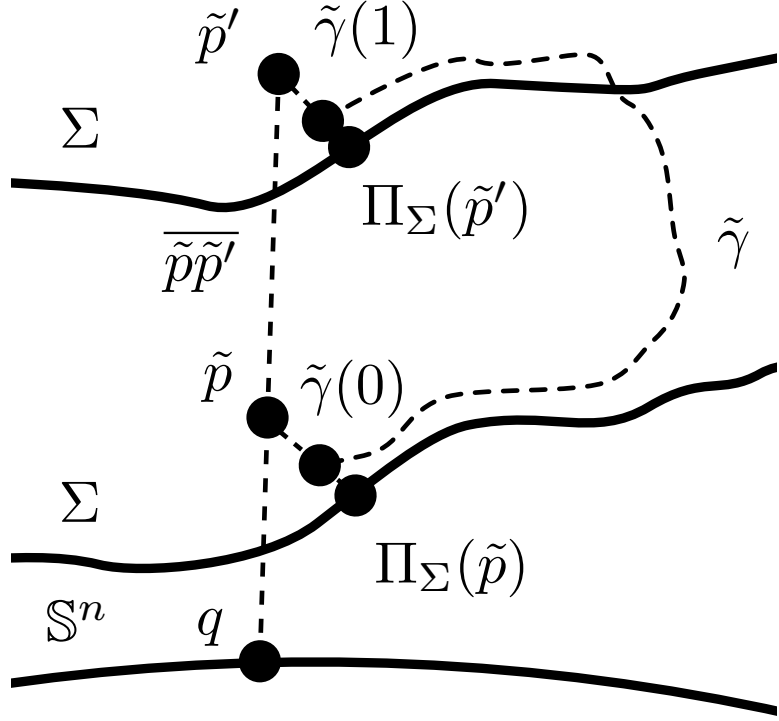}}
\caption{The line segment $\overline{\tilde{p}\tilde{p}'}$ and curve $\tilde{\gamma}$. }
\label{Sec5Img}
\end{figure}

Finally, we show that $\Omega$ must be a connected component of $\mathbb{S}^n\setminus\mathcal{L}(\Sigma)$. To see this, let $\hat{\Omega}$ be the connected component of $\Real^{n+1}\setminus\mathcal{C}(\Sigma)$ which contains $\Sigma$ -- such a component exists as $\Sigma$ is connected and disjoint from $\mathcal{C}(\Sigma)$. Clearly, $\Omega\subset\hat{\Omega}\cap \mathbb{S}^{n}$ and $\hat{\Omega}\cap \mathbb{S}^n$ is a connected component of $\mathbb{S}^n\setminus\mathcal{L}(\Sigma)$. Pick a point $q\in\partial\Omega$ and a sequence $q_i\in \Omega$ with $q_i \to q\not\in\Omega$.  Set $p_i=\Pi^{-1}(q_i)\in\Sigma$. As $\Sigma$ is proper, $|\xX(p_i)|\to \infty$ and so, as $\Sigma$ is smoothly asymptotic to $\mathcal{C}(\Sigma)$, $q\in \mathcal{L}(\Sigma)$.  Hence, $\partial \Omega\subset \mathcal{L}(\Sigma)$ and our claim is verified.
 \end{proof}

We may now prove Theorem \ref{MainACSThm}.

\begin{proof}[Proof of Theorem \ref{MainACSThm}]
First observe that the theorem is trivially true for hyperplanes.   We next claim that $\Sigma$ must be connected. Indeed,  otherwise one could use two of the connected components as barriers to find a stable self-shrinker which is impossible in view of \cite[Lemma 3.3]{BernsteinWangRemark}. As such, we may assume that $\Sigma\in \mathcal{ACS}_n^*[\lambda_{n-1}]$.
 
Hence, applying Theorem \ref{StarshapThm}, we associate to $\Sigma$ two smooth mean curvature flows $\set{\Gamma_t^{\pm \epsilon}}_{t\in[-1,0]}$ with $\Gamma_{-1}^{\pm\epsilon}=\Gamma^{\pm\epsilon}$ for some $\epsilon>0$ small, where $\Gamma^{\pm\epsilon}$ are the hypersurfaces as in Proposition \ref{PerturbedInitialSurfaceProp}. Moreover, each $\Gamma^{\pm}=\Gamma^{\pm\epsilon}_0$ is a connected star-shaped hypersurface which is smoothly asymptotic to, but disjoint from, $\mathcal{C}(\Sigma)$. Notice that $\Real^{n+1}\backslash \Sigma$ consists of exactly two connected components (as $\Sigma$ is connected) one containing $\Gamma^{-\epsilon}$ and the other $\Gamma^\epsilon$. Hence, $\Real^{n+1}\setminus\mathcal{C}(\Sigma)$ also consists of exactly two connected components $\hat{\Omega}^\pm$ each containing $\Gamma^\pm$ and so, by Proposition \ref{RadialGraphTopProp}, each $\Gamma^\pm$ is diffeomorphic to $\Omega^\pm=\hat{\Omega}^\pm\cap \mathbb{S}^n$.  This proves all but the last claim.

The final claim, that $\mathcal{L}(\Sigma)$ is connected, follows by applying the Mayer-Vietoris long exact sequence for reduced homology.  Indeed, slightly enlarge $\Omega^\pm$ so that both are still connected and $\Omega^+\cap \Omega^-$ is a regular tubular neighborhood of $\mathcal{L}(\Sigma)$ in $\mathbb{S}^n$. One then has
\begin{equation}
\tilde{H}_1(\mathbb{S}^n)\to\tilde{H}_0(\mathcal{L}(\Sigma))\to\tilde{H}_0 (\Omega^-)\oplus\tilde{H}_0 (\Omega^+)
\end{equation}
as part of the long exact sequence. As $n\geq 2$ and both $\Omega^-$ and $\Omega^+$ are connected, 
\begin{equation}
\set{0}=\tilde{H}_1(\mathbb{S}^n)=\tilde{H}_0(\Omega^-)=\tilde{H}_0(\Omega^+).
\end{equation}
Hence,  $\tilde{H}_0(\mathcal{L}(\Sigma))=\set{0}$ and so $\mathcal{L}(\Sigma)$ is connected.
\end{proof}

We next prove Corollary \ref{ShrinkerClassificationCor}.

\begin{proof}[Proof of Corollary \ref{ShrinkerClassificationCor}]
First, notice that all possible singularities at the first singular time of mean curvature flow of closed surfaces have smooth supports -- see \cite[Theorem 2]{I}. Hence, one can strengthen \cite[Theorem 0.8]{CIMW} of Colding-Ilmanen-Minicozzi-White to conclude that any closed self-shrinker with entropy (equivalently, Gaussian surface area) less than or equal to $\lambda_1$ is diffeomorphic to a sphere. Furthermore, such a self-shrinker must be $\mathbb{S}^2_*$ by a recent result of Brendle \cite{Brendle} on the uniqueness of closed genus zero self-shrinkers (recall, in our definition self-shrinkers are automatically embedded).

Next, as observed in \cite[Proposition 4.7]{BernsteinWang} and in the proof of \cite[Theorem 1.2]{BernsteinWang} when $n=2$, any non-compact self-shrinker in $\Real^3$ with $\lambda[\Sigma]\leq \lambda_1$ is either a rotation of $\mathbb{S}^1_*\times \Real$ or asymptotically conical. By Theorem \ref{MainACSThm}, in the latter case, $\Sigma$ is diffeomorphic to a open disk. Invoking \cite[Theorem 2]{Brendle} of Brendle, it follows that $\Sigma$ must be flat. Thus, the only self-shrinkers with entropy less than or equal to $\lambda_1$ are $\mathbb{S}^2_*$ and rotations of $\Real^2\times \set{0}$ and $\mathbb{S}_*^1\times\Real$.

To prove the gap, we argue by contradiction. Namely, suppose that there was no such $\delta_0$. Then there would exist a sequence $\Sigma_i$ of self-shrinkers with $\lambda[\Sigma_i]\in (\lambda_1,2)$ and $\lambda[\Sigma_i]\to\lambda_1$.  Up to passing to a subsequence, we may assume that $\Sigma_i$ converges to a multiplicity one $F$-stationary varifold $\Sigma$ with $\lambda [\Sigma]\leq\lambda_1$. As each $\Sigma_i$ is two-sided, the tangent cones of $\Sigma$ cannot be quasi-planes. Also, there is no minimal cone with isolated singularities in $\mathbb{R}^3$. Thus, by Allard's regularity theorem, $\Sigma_i$ converges in $C^\infty_{loc}(\Real^3)$ to $\Sigma$ and $\lambda[\Sigma]=\lim_{i\to \infty} \lambda[\Sigma_i]=\lambda_1$. By the rigidity of self-shrinkers of small entropy established in the previous paragraph, this implies that (up to a rotation) $\Sigma=\mathbb{S}_*^1\times \Real$. Hence, in view of \cite[Theorem 0.1]{CIM}, for all $i$ sufficiently large, $\Sigma_i$ are rotations of $\mathbb{S}^1_*\times\mathbb{R}$ and, in particular, $\lambda[\Sigma_i]=\lambda_1$, giving the desired contradiction.
\end{proof}

Finally we show Corollary \ref{TopSurfaceCor}.

\begin{proof}[Proof of Corollary \ref{TopSurfaceCor}]
Let $\set{\Sigma_t}_{t\in [-1,T)}$ be the maximal smooth mean curvature flow with initial surface $\Sigma_{-1}=\Sigma$. As $\Sigma$ is closed, a comparison with a large shrinking sphere implies that $T<\infty$. Let $X_0=(\xX_0,T)$ be a singular point of the flow. It follows from Corollary \ref{ShrinkerClassificationCor} and the monotonicity of entropy that $\lambda[\Sigma]\geq\lambda_2$ with equality if and only if, modulo translations and scalings, $\Sigma$ is equal to $\mathbb{S}^2$. If the tangent flow at $X_0$ is a self-shrinking sphere, then, by Brakke's local regularity theorem, $\Sigma_t$ must be very close to the sphere for all $t$ near $T$. In particular, $\Sigma_t$ is diffeomorphic to $\mathbb{S}^2$ and thus, as the flow is smooth, so is $\Sigma$. Hence, if $\Sigma$ has positive genus, the tangent flow cannot be a shrinking sphere and so $\lambda[\Sigma]>\lambda_1$ by the monotonicity formula and Corollary \ref{ShrinkerClassificationCor}.
\end{proof}

\appendix

\section{}
We use a weighted version of Huisken's monotonicity formula to establish a parabolic maximum principle for smooth non-compact mean curvature flows with boundaries. This is a slight generalization of \cite[Corollary 1.1]{EH}.

\begin{thm} \label{NonCompactMaxPrincipleThm}
Let $\mathcal{S}=\set{\Sigma_t}_{t\in [-1,T)}$ be a smooth mean curvature flow in $\mathbb{R}^{n+1}$ with finite entropy. For a fixed $R\geq 0$ and $X_0=(\OO,-1)$, suppose that $u$ is a $C^2$ function on $\mathcal{S}\setminus C_{R,T+1} (X_0)$ which satisfies:
\begin{enumerate}
\item \label{NCMPi} 
on $\mathring{\mathcal{S}}=\mathcal{S}\setminus\overline{C_{R,T+1} (X_0)}$,
\begin{equation}
\left(\frac{d}{dt}-\Delta_{\Sigma_t}\right) u \geq\mathbf{a}\cdot \nabla_{\Sigma_t} u+ bu
\end{equation}
with $\sup_{\mathring{\mathcal{S}}} |\mathbf{a}|^2+|b|=M_0<\infty$; 

\item \label{NCMPii}
$u>c_0\geq 0$ on $\partial_P\mathring{\mathcal{S}}=\left(\Sigma_{-1}\setminus B_R\right)\cup\left(\mathcal{S}\cap\partial C_{R,T+1} (X_0)\right)$ for some constant $c_0$;

\item \label{NCMPiii} for all $t\in [-1,T)$,
\begin{equation}
\int_{\Sigma_t\setminus B_R} \left(\abs{u}^2+\abs{\frac{du}{dt}}^2+\abs{\nabla_{\Sigma_t} u}^2+\abs{\nabla_{\Sigma_t}^2 u}^2\right) \Phi_{(\OO,T)} \, d\mathcal{H}^n<\infty,
\end{equation}
where
\begin{equation}
\Phi_{(\OO,T)} (\xX,t)=(T-t)^{-\frac{n}{2}} \Phi\left(\frac{\xX}{\sqrt{T-t}}\right)=\left(4\pi (T-t)\right)^{-\frac{n}{2}}e^{-\frac{|\xX|^2}{4(T-t)}}.
\end{equation}
\end{enumerate}
Then for all $t\in [-1,T)$,
\begin{equation}
\inf_{\Sigma_t \setminus B_R} u \geq c_0 e^{M_1 (t+1)} \geq 0,
\end{equation}
where $M_1=\min\set{\inf_{\mathring{\mathcal{S}}} b,0} \leq 0$.
\end{thm}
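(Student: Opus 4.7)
The plan is to adapt the weighted $L^2$ energy method of Ecker-Huisken, extended to accommodate the spatial boundary at $\partial B_R$ and the exponential factor arising when $M_1 < 0$. First I would reduce to the case $c_0 = 0$ and $M_1 = 0$ by setting $\tilde{u} = u e^{-M_1(t+1)} - c_0$. A direct computation shows $(\frac{d}{dt} - \Delta_{\Sigma_t})\tilde{u} \geq \mathbf{a}\cdot\nabla_{\Sigma_t}\tilde{u} + \tilde{b}\tilde{u}$ on $\mathring{\mathcal{S}}$ with $\tilde{b} = b - M_1 \geq 0$ (the nonnegative inhomogeneous term $\tilde{b}c_0$ is discarded), while on $\partial_P\mathring{\mathcal{S}}$ we have $\tilde{u} \geq u - c_0 > 0$ since $e^{-M_1(t+1)} \geq 1$. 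The goal becomes to show $\tilde{u} \geq 0$, equivalently that $v := \max(-\tilde{u}, 0)$ vanishes identically.

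The function $v$ is nonnegative and Lipschitz, vanishes on $\partial_P\mathring{\mathcal{S}}$, and on $\{v > 0\}$ inherits $(\frac{d}{dt} - \Delta_{\Sigma_t})v \leq \mathbf{a}\cdot\nabla_{\Sigma_t} v + \tilde{b}v$. Multiplying by $2v$ and applying Young's inequality yields
\begin{equation*}
\left(\frac{d}{dt} - \Delta_{\Sigma_t}\right)v^2 \leq M_2 v^2 - |\nabla_{\Sigma_t} v|^2
\end{equation*}
for some $M_2 = M_2(M_0, \sup\tilde{b})$. Set $X_1 = (\OO, T)$ and let $\phi_\rho$ be a standard smooth cutoff, equal to $1$ on $B_\rho$, supported in $B_{2\rho}$, with $|D\phi_\rho| \leq 2/\rho$. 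Define
\begin{equation*}
E_\rho(t) = \int_{\Sigma_t} v^2 \phi_\rho^2\, \Phi_{(\OO,T)}\, d\mathcal{H}^n.
\end{equation*}
Applying Huisken's monotonicity-type identity
\begin{equation*}
\frac{d}{dt}\int_{\Sigma_t} f\, \Phi_{(\OO,T)}\, d\mathcal{H}^n = \int_{\Sigma_t}\left[\left(\frac{d}{dt} - \Delta_{\Sigma_t}\right)f\right]\Phi_{(\OO,T)}\, d\mathcal{H}^n - \int_{\Sigma_t} f\,\left|\mathbf{H}_{\Sigma_t} + \frac{\xX^\perp}{2(T-t)}\right|^2 \Phi_{(\OO,T)}\, d\mathcal{H}^n
\end{equation*}
to $f = v^2\phi_\rho^2$, absorbing the cross term $v\phi_\rho\nabla_{\Sigma_t} v \cdot \nabla_{\Sigma_t}\phi_\rho$ against $|\nabla_{\Sigma_t} v|^2\phi_\rho^2$, and discarding the nonpositive Huisken term, one arrives at
\begin{equation*}
\frac{d}{dt} E_\rho(t) \leq M_2 E_\rho(t) + C\rho^{-2}\int_{\Sigma_t \cap (B_{2\rho}\setminus B_\rho)} v^2\, \Phi_{(\OO,T)}\, d\mathcal{H}^n.
\end{equation*}
The boundary term at $\partial B_R$ from integration by parts vanishes because $v$, and hence $\nabla_{\Sigma_t}(v^2\phi_\rho^2)$, vanishes there.

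To conclude, I would let $\rho \to \infty$. The integrability hypothesis \eqref{NCMPiii}, combined with $v^2 \leq 2e^{-2M_1(t+1)}u^2 + 2c_0^2$ and the finite-entropy assumption (which bounds $\int_{\Sigma_t}\Phi_{(\OO,T)}\, d\mathcal{H}^n$ uniformly), yields $\int_{\Sigma_t} v^2\, \Phi_{(\OO,T)}\, d\mathcal{H}^n < \infty$, so the error term tends to zero. Hence $\frac{d}{dt}E_\infty \leq M_2 E_\infty$; since $\tilde{u} > 0$ on $\Sigma_{-1}\setminus B_R$ implies $E_\infty(-1) = 0$, Gronwall's inequality forces $E_\infty \equiv 0$, giving $v \equiv 0$ and $\tilde{u} \geq 0$ as desired. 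The principal technical difficulty will be the Huisken-type computation for the merely Lipschitz product $v^2\phi_\rho^2$: justifying the integration by parts (using that $v^2$ is $C^{1,1}$ so the identity holds a.e.\ and distributionally), confirming that the boundary contribution at $\partial B_R$ vanishes, and verifying that the finite-entropy and integrability assumptions are strong enough to control the cutoff error in the limit $\rho \to \infty$.
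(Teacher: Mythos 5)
Your proposal is correct and follows essentially the same route as the paper: take the negative part of the (exponentially rescaled, $c_0$-shifted) function, extend it by zero across the excised cylinder using strict positivity near the parabolic boundary, derive $\left(\frac{d}{dt}-\Delta_{\Sigma_t}\right)v^2\leq M_2 v^2$ weakly, and run a Gaussian-weighted $L^2$ Gronwall argument via Huisken's monotonicity, with hypothesis \eqref{NCMPiii} guaranteeing the needed integrability. The only differences are cosmetic: you perform the reduction in one step (discarding the nonnegative term $(b-M_1)c_0$) where the paper first proves $u\geq 0$ and then repeats the argument for $\tilde{u}-c_0$, and you re-derive the weighted monotonicity with explicit cutoffs where the paper cites \cite[Theorem 4.13]{EckerBook}.
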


\begin{proof}	
We will first show that $u\geq 0$. Let $u_0(p,t)=\max\set{-u(p,t),0}\geq 0$ on $\mathring{\mathcal{S}}$ and we extend $u_0$ by zero to the whole $\mathcal{S}$. As $u$ is continuous and, for every $0<T^\prime<T$, $\mathcal{S}\cap\partial_P C_{R,T^\prime+1} (X_0)$ is compact, Item \eqref{NCMPii} implies that $-u$ is strictly negative in a neighborhood of $\mathcal{S}\cap\partial_P C_{R,T^\prime+1} (X_0)$. Hence, $u^2_0$ is $C^{1,1}$ and satisfies, in the weak sense, that
\begin{equation} \label{u0EvEqn}
\left(\frac{d}{dt}-\Delta_{\Sigma_t}\right) u_0^2 \leq -2\abs{\nabla_{\Sigma_t} u_0}^2+2u_0 \mathbf{a}\cdot\nabla_{\Sigma_t} u_0+2b u_0^2\leq 3M_0 u_0^2.
\end{equation}
Here we used Item \eqref{NCMPi} and Young's inequality.

Invoking \eqref{u0EvEqn} and Item \eqref{NCMPiii}, it follows from the weighted monotonicity formula \cite[Theorem 4.13]{EckerBook} that
\begin{equation}
\frac{d}{dt} \int_{\Sigma_t} u_0^2 \, \Phi_{(\OO,T)} \, d\mathcal{H}^n \leq 3M_0 \int_{\Sigma_t} u_0^2 \, \Phi_{(\OO,T)} \, d\mathcal{H}^n.
\end{equation}
This together with Item \eqref{NCMPii} implies that for all $t\in [-1,T)$,
\begin{equation}
\int_{\Sigma_t} u_0^2 \, \Phi_{(\OO,T)} \, d\mathcal{H}^n=0.
\end{equation}
That is, $u_0\equiv 0$ on $\mathcal{S}$ and so $u\geq 0$ as claimed.

Set $\tilde{u}=e^{-M_1 (t+1)} u$. Using $\tilde{u}\geq 0$ and $b-M_1\geq 0$, it follows that
\begin{equation}
\left(\frac{d}{dt}-\Delta_{\Sigma_t}\right) \tilde{u} \geq \mathbf{a}\cdot \nabla_{\Sigma_t} \tilde{u}+\left(b-M_1\right)\tilde{u} \geq \mathbf{a}\cdot\nabla_{\Sigma_t} \tilde{u}. 
\end{equation} 
As $M_1 \leq 0$, $\tilde{u}>c_0$ on $\partial_P\mathring{\mathcal{S}}$. Applying the same argument as above to $\tilde{u}-c_0$, we conclude that $\tilde{u} \geq c_0$ which completes the proof.
\end{proof}

\section{}
Throughout this appendix, let $\Sigma$ be a self-shrinker in $\mathbb{R}^{n+1}$ and let $H^1_w(\Sigma)$ and $L^2_w(\Sigma)$ be as in the proof of Proposition \ref{PosEigenValProp}. The goal is to prove the natural embedding 
\begin{equation}
\iota : H^1_w(\Sigma) \hookrightarrow L^2_w(\Sigma)
\end{equation}
is compact.

First, we prove a lemma that yields a uniform decay rate for the weighted $L^2$ integration over exterior regions.

\begin{lem} \label{ExtWeightL2Lem}
For all $\phi\in C^\infty_c (\Sigma)$, 
\begin{equation}
\int_\Sigma \phi^2 |\xX|^2 e^{-\frac{|\xX|^2}{4}} d\mathcal{H}^n \leq 16 \int_\Sigma |\nabla_\Sigma\phi|^2 e^{-\frac{|\xX|^2}{4}} d\mathcal{H}^n+4n \int_\Sigma \phi^2 e^{-\frac{|\xX|^2}{4}} d\mathcal{H}^n
\end{equation}
\end{lem}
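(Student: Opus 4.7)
The plan is to obtain the inequality through a single application of the first variation formula to a carefully chosen vector field, combined with the self-shrinker equation and Young's inequality. Since $\phi \in C^\infty_c(\Sigma)$, the vector field $V = \phi^2 e^{-|\xX|^2/4}\xX$ is compactly supported on $\Sigma$, so the first variation formula gives
\begin{equation*}
\int_\Sigma \Div_\Sigma V \, d\mathcal{H}^n = -\int_\Sigma V \cdot \mathbf{H}_\Sigma \, d\mathcal{H}^n.
\end{equation*}
The self-shrinker equation $\mathbf{H}_\Sigma = -\xX^\perp/2$ converts the right-hand side to $\int_\Sigma \phi^2 |\xX^\perp|^2/2 \cdot e^{-|\xX|^2/4} d\mathcal{H}^n$.

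For the left-hand side, I would use the product rule $\Div_\Sigma(\psi V) = \nabla_\Sigma \psi \cdot V + \psi \Div_\Sigma V$ together with $\Div_\Sigma \xX = n$ (trace of the ambient identity over tangent directions) and $\nabla_\Sigma e^{-|\xX|^2/4} = -e^{-|\xX|^2/4} \xX^\top/2$ to get
\begin{equation*}
\Div_\Sigma V = e^{-|\xX|^2/4}\left(2\phi \nabla_\Sigma \phi \cdot \xX^\top - \tfrac{1}{2}\phi^2 |\xX^\top|^2 + n\phi^2\right).
\end{equation*}
Equating the two sides and grouping $|\xX^\top|^2 + |\xX^\perp|^2 = |\xX|^2$ yields the identity
\begin{equation*}
\int_\Sigma \phi^2 |\xX|^2 e^{-|\xX|^2/4} d\mathcal{H}^n = \int_\Sigma \bigl(4\phi \nabla_\Sigma \phi \cdot \xX^\top + 2n \phi^2\bigr) e^{-|\xX|^2/4} d\mathcal{H}^n.
\end{equation*}

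To close the argument, I would apply Young's inequality in the form $4ab \leq \tfrac{1}{2}a^2 + 8 b^2$ with $a = |\phi| |\xX|$ (bounding $|\xX^\top| \le |\xX|$) and $b = |\nabla_\Sigma \phi|$, then absorb the $\tfrac{1}{2}\int \phi^2|\xX|^2 e^{-|\xX|^2/4}$ term from the right-hand side into the left-hand side and multiply through by $2$. There is no serious obstacle here; the only thing that could go wrong is a sign error in the self-shrinker identity $\xX \cdot \mathbf{H}_\Sigma = -|\xX^\perp|^2/2$, which one must verify carefully using the sign convention $\mathbf{H}_\Sigma = -H_\Sigma \nN_\Sigma$ stated in the Notation section.
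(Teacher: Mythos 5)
Your proposal is correct and follows essentially the same route as the paper: the divergence theorem applied to $\phi^2 e^{-|\xX|^2/4}\xX$, the self-shrinker equation to convert the mean curvature term into $\tfrac12\phi^2|\xX^\perp|^2$, and Young's inequality with constants chosen so the $\phi^2|\xX|^2$ term absorbs into the left-hand side, yielding exactly the stated constants $16$ and $4n$.
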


\begin{proof}
Recall that the divergence theorem implies that
\begin{equation}
\int_\Sigma{\rm div}_\Sigma \mathbf{V} d\mathcal{H}^n=-\int_\Sigma\mathbf{H}_\Sigma\cdot \mathbf{V} d\mathcal{H}^n
\end{equation}
for all vector fields $\mathbf{V}$ of compact support. Setting $\mathbf{V}=\xX\phi^2 e^{-|\xX|^2/4}$ yields the lemma. 

Indeed, we compute
\begin{equation}
{\rm div}_\Sigma \mathbf{V}=n\phi^2 e^{-\frac{|\xX|^2}{4}}+2(\xX\cdot\nabla_\Sigma\phi)\phi e^{-\frac{|\xX|^2}{4}}-\frac{1}{2}\phi^2 |\xX^\top|^2 e^{-\frac{|\xX|^2}{4}},
\end{equation}
and, using the self-shrinker equation and Young's inequality, 
\begin{equation}
\begin{split}
\frac{1}{2}\int_\Sigma \phi^2 |\xX|^2 e^{-\frac{|\xX|^2}{4}} d\mathcal{H}^n & =n\int_\Sigma \phi^2 e^{-\frac{|\xX|^2}{4}} d\mathcal{H}^n+2\int_\Sigma \left(\xX\cdot\nabla_\Sigma\phi\right) \phi e^{-\frac{|\xX|^2}{4}} d\mathcal{H}^n \\
& \leq n\int_\Sigma \phi^2 e^{-\frac{|\xX|^2}{4}} d\mathcal{H}^n+4\int_\Sigma \abs{\nabla_\Sigma\phi}^2 e^{-\frac{|\xX|^2}{4}} d\mathcal{H}^n \\
& +\frac{1}{4}\int_\Sigma \phi^2 |\xX|^2 e^{-\frac{|\xX|^2}{4}} d\mathcal{H}^n.
\end{split}
\end{equation}
The desired estimate follows from a rearrangement of the above inequality.
\end{proof} 

Now we are ready to prove that

\begin{prop}
The natural embedding $\iota : H_w^1(\Sigma) \hookrightarrow L^2_w(\Sigma)$ is compact.
\end{prop}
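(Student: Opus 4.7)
The strategy is the standard local-compactness-plus-tail-control argument: extract a subsequence converging locally on $\Sigma\cap B_R$ by the Rellich-Kondrachov theorem, and use the weighted inequality of Lemma B.1 to rule out escape of mass at infinity.

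Let $\set{\phi_k}_{k\geq 1}\subset H^1_w(\Sigma)$ be bounded, say $\norm{\phi_k}_1\leq M$. By density of $C_c^\infty(\Sigma)$ in $H^1_w(\Sigma)$ and Fatou's lemma applied to an approximating sequence, Lemma B.1 extends to all $\phi\in H^1_w(\Sigma)$. Hence
\begin{equation*}
\int_\Sigma \phi_k^2 \abs{\xX}^2 e^{-\frac{\abs{\xX}^2}{4}}\, d\mathcal{H}^n\leq 16\norm{\nabla_\Sigma\phi_k}_0^2+4n\norm{\phi_k}_0^2\leq CM^2,
\end{equation*}
and since $\abs{\xX}^2\geq R^2$ on $\Sigma\setminus B_R$, this yields the uniform tail bound
\begin{equation*}
\int_{\Sigma\setminus B_R} \phi_k^2 e^{-\frac{\abs{\xX}^2}{4}}\, d\mathcal{H}^n\leq \frac{CM^2}{R^2}.
\end{equation*}

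For local compactness, observe that on each relatively compact open subset $\Sigma\cap B_R\subset\Sigma$ the Gaussian weight is bounded above and below by positive constants depending only on $R$, so the restrictions $\phi_k\vert_{\Sigma\cap B_R}$ are uniformly bounded in the standard Sobolev space $H^1(\Sigma\cap B_R)$. Picking $R$ generically so that $\partial B_R$ is transverse to $\Sigma$ (Sard's theorem), $\Sigma\cap B_R$ has smooth boundary, and the classical Rellich-Kondrachov theorem gives a subsequence converging in $L^2(\Sigma\cap B_R)$, hence in the $\norm{\cdot}_0$-norm on this set. Taking an increasing sequence of generic radii $R_j\to\infty$ and extracting diagonally, I obtain a subsequence $\set{\phi_{k_m}}$ that is Cauchy in the $\norm{\cdot}_0$-norm restricted to $\Sigma\cap B_{R_j}$ for every $j$.

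To conclude, given $\eps>0$, fix $j$ with $R_j$ so large that $8CM^2/R_j^2<\eps$. For all $m,\ell$ sufficiently large,
\begin{equation*}
\norm{\phi_{k_m}-\phi_{k_\ell}}_0^2\leq \int_{\Sigma\cap B_{R_j}}(\phi_{k_m}-\phi_{k_\ell})^2 e^{-\frac{\abs{\xX}^2}{4}}\, d\mathcal{H}^n+4\sup_{i}\int_{\Sigma\setminus B_{R_j}}\phi_{k_i}^2 e^{-\frac{\abs{\xX}^2}{4}}\, d\mathcal{H}^n<\eps,
\end{equation*}
so $\set{\phi_{k_m}}$ is Cauchy, hence convergent, in $L^2_w(\Sigma)$. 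The only nontrivial input is Lemma B.1, which plays the role of a weighted Hardy-type inequality providing uniform decay at spatial infinity; without such a tail estimate the compactness would fail, as it does in the unweighted Euclidean setting.
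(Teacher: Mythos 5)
Your proof is correct and follows essentially the same route as the paper's: interior compactness via Rellich--Kondrachov on bounded pieces of $\Sigma$, a diagonal argument over radii $R_j\to\infty$, and Lemma B.1 supplying the uniform weighted tail bound that prevents loss of mass at infinity. The only differences are cosmetic -- the paper localizes with cut-off functions $\psi_R$ and works with a dense sequence in $C^\infty_c(\Sigma)$, whereas you restrict to generic balls via Sard and extend Lemma B.1 to all of $H^1_w(\Sigma)$ by density and Fatou -- and both versions are sound.
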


\begin{proof}
As $C^\infty_c (\Sigma)$ is dense in $H_w^1(\Sigma)$, it suffices to prove the proposition for $\iota$ restricted to $C^\infty_c (\Sigma)\cap H^1_w(\Sigma)$. Namely, let $\phi_i$ be a sequence of elements of $C^\infty_c(\Sigma)\cap H^1_w(\Sigma)$ with $(\phi_i,\phi_i)_1\leq C$ for some constant $C>0$. We need to find a subsequence $\phi_{i_j}$ converging strongly in $L^2_w(\Sigma)$.

For $R>1$ we introduce the cut-off function $\psi_R: \mathbb{R}^{n+1}\to [0,1]$ so that $\psi_R=1$ in $B_R$, $\psi_R=0$ outside $B_{2R}$, and $|D\psi_R|\leq 2/R$. Let $\phi^R_i=\phi_i\psi_R$. Clearly ${\rm spt} (\phi_i^R)\subset\Sigma\cap B_{2R}$ and $(\phi_i^R,\phi_i^R)_1\leq 8C$ for all $i$. By the Rellich-Kondrachov compactness theorem, there is a subsequence of $\phi_i^R$ converging strongly in $L^2_w(\Sigma)$ to $\phi^R$ supported in $\Sigma\cap B_{2R}$. Now take an increasing sequence $R_k\to\infty$ and by a diagonalization argument we find a subsequence $i_j\to\infty$ and $\phi\in L^2_w(\Sigma)$ so that $\phi_{i_j}\to\phi$ strongly in $L^2_w(\Sigma\cap B_{R_k})$ for each $k$. 

Observe that $\Vert\phi^{R_k}-\phi\Vert_0\to 0$ as $k\to\infty$. And by Lemma \ref{ExtWeightL2Lem}, $\Vert\phi_{i_j}-\phi_{i_j}^{R_k}\Vert_0\leq C^\prime R_k^{-2}$ for some $C^\prime$ depending only on $n$ and $C$. Thus
\begin{equation}
\Vert\phi_{i_j}-\phi\Vert_0\leq\Vert\phi_{i_j}-\phi_{i_j}^{R_k}\Vert_0+\Vert\phi_{i_j}^{R_k}-\phi^{R_k}\Vert_0+\Vert\phi^{R_k}-\phi\Vert_0\to 0,
\end{equation}
i.e., $\phi_{i_j}\to\phi$ strongly in $L^2_w(\Sigma)$.
\end{proof}

\section{}
Let $\Sigma\in\mathcal{ACS}^*_n$ and let $f$ be the lowest eigenfunction of $L_\Sigma$ as in Proposition \ref{PosEigenValProp}. Throughout $\epsilon_1>0$ is assumed to be sufficiently small. We consider a one-parameter family of hypersurfaces 
\begin{equation}
\Gamma^s=\set{\yY\in\mathbb{R}^{n+1}: \yY=\xX(p)+sf(p)\nN_\Sigma (p), p\in\Sigma}
\end{equation}
for $|s|\leq\epsilon_1$ with $\Gamma^0=\Sigma$. Observe that, as $\Gamma^s$ is asymptotically conical by Item \eqref{PISPii} of Proposition \ref{PerturbedInitialSurfaceProp}, for each $|s|\leq\epsilon_1$, this family of hypersurfaces is a normal variation of $\Gamma^s$ with vector field $f_{\Gamma^s} \nN_{\Gamma^s}$, where $f_{\Gamma^s}=f\nN_\Sigma\cdot\nN_{\Gamma^s}$. Define a function 
\begin{equation}
G: \mathbb{R}^{n+1}\times\mathbb{R}^+\times [-\epsilon_1,\epsilon_1]\to\mathbb{R}^+,\quad G(\xX_0,t_0,s)=\left(4\pi t_0\right)^{-\frac{n}{2}} \int_{\Gamma^s} e^{-\frac{|\xX-\xX_0|^2}{4t_0}} d\mathcal{H}^n.
\end{equation}

The goal of this appendix is to show that there is an $\epsilon_2\in (0,\epsilon_1)$ sufficiently small so that if $s\neq 0$ and $|s|\leq\epsilon_2$, then 
\begin{equation} \label{DecreaseEntEqn}
\lambda [\Gamma^s]\equiv\sup_{\xX_0,t_0} G(\xX_0,t_0,s)<G(\OO,1,0)=\lambda [\Sigma].
\end{equation}
In order to establish \eqref{DecreaseEntEqn}, we follow closely the proof of \cite[Theorem 0.15]{CM} and thus it suffices to show:

\begin{prop}
The following properties hold:
\begin{enumerate}
\item \label{DEPi}
$G$ has a strict local maximum at $(\OO,1,0)$;

\item \label{DEPii} 
The restriction of $G$ to $\Sigma$, i.e., $G(\xX_0,t_0,0)$, has a strict global maximum at $(\OO,1)$;

\item \label{DEPiii}
$|\partial_s G|$ is uniformly bounded on compact sets;

\item \label{DEPiv}
$G(\xX_0,t_0,s)$ is strictly less than $G(\OO,1,0)$ whenever $|\xX_0|$ is sufficiently large;

\item \label{DEPv}
$G(\xX_0,t_0,s)$ is strictly less than $G(\OO,1,0)$ whenever $|\log t_0|$ is sufficiently large.
\end{enumerate}
\end{prop}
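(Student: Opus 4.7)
The strategy is to verify the five items following the scheme of \cite[Theorem 0.15]{CM}, modifying only what is needed to accommodate non-compactness, which is controlled via the asymptotically conical structure given in Proposition \ref{PerturbedInitialSurfaceProp}.

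I would first establish Item \eqref{DEPii}. The inequality $G(\xX_0,t_0,0)\leq\lambda[\Sigma]=G(\OO,1,0)$ is immediate from the definition of the entropy. Strictness at any $(\xX_0,t_0)\neq(\OO,1)$ follows from the rigidity case of Huisken's monotonicity formula: equality would force $t_0^{-1/2}(\Sigma-\xX_0)$ to be a self-shrinker distinct from $\Sigma$, but the regular non-planar asymptotic cone $\mathcal{C}(\Sigma)$ (since $\Sigma\in\mathcal{ACS}_n^*$) pins down the center of dilation and excludes nontrivial translations. Item \eqref{DEPi} then reduces to computing the Hessian of $G$ at $(\OO,1,0)$, which is a critical point since $\Sigma$ is a self-shrinker and $f$ lies in the kernel of the first variation there. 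The $(\xX_0,t_0)$-block is negative definite by Item \eqref{DEPii}, and the standard second variation of the weighted area in the $s$-direction yields
\begin{equation*}
\partial_s^2 G\big|_{(\OO,1,0)}=-\int_\Sigma f\,L_\Sigma f\,\Phi\,d\mathcal{H}^n=\mu\Vert f\Vert_0^2=\mu<0.
\end{equation*}
The mixed partials $\partial_s\partial_{x_0^i}G$ and $\partial_s\partial_{t_0}G$ at $(\OO,1,0)$ reduce, up to fixed positive constants, to the weighted pairings $\int_\Sigma f(\eE_i\cdot\nN_\Sigma)\,\Phi\,d\mathcal{H}^n$ and $\int_\Sigma f H_\Sigma\,\Phi\,d\mathcal{H}^n$. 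Since $\eE_i\cdot\nN_\Sigma$ and $H_\Sigma$ are eigenfunctions of the self-adjoint operator $L_\Sigma$ on $L^2_w(\Sigma)$ with respective eigenvalues $1/2$ and $1$, whereas $f$ has the simple eigenvalue $-\mu>1$ (Proposition \ref{PosEigenValProp}), both cross-terms vanish, so the Hessian is block-diagonal and negative definite.

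Item \eqref{DEPiii} is a standard differentiation-under-the-integral argument: smoothness of $s\mapsto\Gamma^s$, together with the Gaussian weight and the polynomial volume growth of $\Gamma^s$ (Item \eqref{PISPii} of Proposition \ref{PerturbedInitialSurfaceProp}), yields a uniform bound on $|\partial_s G|$ over compact subsets of $(\xX_0,t_0,s)$-space. For the ``decay at infinity'' statements in Items \eqref{DEPiv} and \eqref{DEPv}, I would exploit the asymptotically conical structure uniformly in $s$. In Item \eqref{DEPiv}, as $|\xX_0|\to\infty$ the Gaussian kernel centered at $\xX_0$ concentrates in a region where $\Gamma^s$ is arbitrarily close to a tangent hyperplane of $\mathcal{C}(\Sigma)$, so $G(\xX_0,t_0,s)$ approaches a value at most $1<\lambda[\Sigma]$. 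In Item \eqref{DEPv}, as $t_0\to 0^+$ the kernel concentrates at $\xX_0$ and the $C^2$-regularity of $\Gamma^s$ forces $G\to 0$ or $G\to 1$ (depending on whether $\xX_0\notin\Gamma^s$ or $\xX_0\in\Gamma^s$); as $t_0\to\infty$, the surface $t_0^{-1/2}(\Gamma^s-\xX_0)$ blows down to $\mathcal{C}(\Sigma)$, so $G\to F[\mathcal{C}(\Sigma)]$, and $F[\mathcal{C}(\Sigma)]<\lambda[\Sigma]$ by monotonicity of entropy under blow-downs combined with the fact that $\Sigma$ is a smooth, non-conical hypersurface. Uniformity in $s$ for $|s|\leq\epsilon_1$ follows from the $C^m$-closeness of $\Gamma^s$ to $\Sigma$ provided by Proposition \ref{PerturbedInitialSurfaceProp}.

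The main obstacle is the orthogonality argument in Item \eqref{DEPi}: establishing that the mixed partials vanish requires a careful integration by parts in $L^2_w(\Sigma)$, leveraging the polynomial decay bounds \eqref{EigenBndEqn}--\eqref{EigenDerEqn} to discard boundary terms and to identify the normal variation generators of translation and dilation with eigenfunctions of $L_\Sigma$. A secondary difficulty is upgrading the pointwise decay in Items \eqref{DEPiv} and \eqref{DEPv} to strict inequality \emph{uniformly in $s$} on a small neighborhood of $s=0$, which is achieved by the quantitative convergence of $\Gamma^s$ to the cone $\mathcal{C}(\Sigma)$.
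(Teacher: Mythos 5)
Your outline follows the right template (second variation at $(\OO,1,0)$ with orthogonality of $f$ to $H_\Sigma$ and $\eE_i\cdot\nN_\Sigma$, plus far-field estimates), but there is a genuine gap in your treatment of the fourth and fifth items: you never handle the regime where $|\xX_0|$ and $t_0$ are large \emph{simultaneously}, say $t_0\gtrsim |\xX_0|^2$. There the Gaussian kernel at scale $\sqrt{t_0}$ does not concentrate in a region where $\Gamma^s$ is close to a tangent hyperplane of $\mathcal{C}(\Sigma)$; after rescaling by $t_0^{-1/2}$ the center becomes $\yY_0=\xX_0/\sqrt{t_0}$, which need not tend to $\OO$, and $G(\xX_0,t_0,s)$ is close to $F\left[\mathcal{C}(\Sigma)-\yY_0\right]$, a \emph{translated} cone. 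Neither your ``value at most $1$'' claim nor your blow-down claim ``$G\to F[\mathcal{C}(\Sigma)]$'' covers this; to close it one would need something like $\sup_{\yY_0}F[\mathcal{C}(\Sigma)-\yY_0]<\lambda[\Sigma]$, which is not supplied by lower semicontinuity of entropy under blow-downs (that gives only $\leq$) and is exactly the kind of quantitative gap you have not established. The paper avoids this entirely: for $t_0>1$ it writes $\rho\Gamma^s$ ($\rho=t_0^{-1/2}$) as a normal graph over $\rho\Sigma$ and proves $F[\rho\Gamma^s-\yY_0]\leq F[\rho\Sigma-\yY_0]+\alpha_1/2+C_E$ with $C_E\to 0$ as $\epsilon_1\to 0$, then invokes the quantitative gap $\alpha_0$ for the unperturbed $\Sigma$ (from \cite[Lemma 7.10]{CM}), together with uniform-in-$s$ area-ratio and mean-curvature bounds obtained via the $\partial_{t_0}G\geq -\tfrac{C_H}{4}G$ inequality. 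Your appeal to ``quantitative convergence of $\Gamma^s$ to the cone'' does not substitute for this comparison-with-$\Sigma$ mechanism, and your pointwise limits as $t_0\to 0$ or $t_0\to\infty$ are not shown to be uniform over the non-compact parameter region where they are needed.

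A secondary error: in the first item you assert that the $(\xX_0,t_0)$-block of the Hessian is negative definite ``by Item (2)''. A strict global maximum only gives negative \emph{semi}-definiteness; definiteness of that block must be proved directly, as the paper does, by showing
\begin{equation*}
\int_{\Sigma}\left(a^2H_\Sigma^2+\tfrac{1}{2}\left(\yY\cdot\nN_\Sigma\right)^2\right)e^{-\frac{|\xX|^2}{4}}\,d\mathcal{H}^n>0\quad\text{for }(a,\yY)\neq(0,\OO),
\end{equation*}
i.e., that $H_\Sigma\not\equiv 0$ and that $\Sigma$ does not split off a line; both follow from $\Sigma\in\mathcal{ACS}_n^*$ but require that observation, not Item (2). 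The orthogonality step and the second item are otherwise in the spirit of the paper's proof (which simply cites \cite[Corollary 3.10]{CM} and \cite[Lemma 7.10]{CM}), but as written your proposal does not yield the fourth and fifth items.
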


\begin{proof}
Observe that, by \cite[Proposition 3.6]{CM}, the gradient of $G$ vanishes at $(\OO,1,0)$. Hence, Item \eqref{DEPi} will follow if we show that the Hessian of $G$ at $(\OO,1,0)$ is negative definite. That is, if, for any $a,b\in\mathbb{R}$ and $\yY\in\mathbb{R}^{n+1}$ so that $(a,b,\yY)\neq (0,0,\OO)$,
\begin{equation}
\partial_{ss}|_{s=0}G(s\yY,1+as,bs)<0.
\end{equation}
To see this, we first use \cite[Theorem 4.14]{CM} to conclude that, at $s=0$,
\begin{equation} \label{2ndVarEqn}
\begin{split}
\partial_{ss}G(s\yY,1+as,s)=(4\pi)^{-\frac{n}{2}} & \int_{\Sigma} \left(-f L_\Sigma f-a^2H^2_\Sigma-\frac{\left(\yY\cdot\nN_\Sigma\right)^2}{2}\right) e^{-\frac{|\xX|^2}{4}} d\mathcal{H}^n\\
& +(4\pi)^{-\frac{n}{2}} \int_{\Sigma} \left(2afH_\Sigma+f \yY\cdot\nN_\Sigma\right) e^{-\frac{|\xX|^2}{4}} d\mathcal{H}^n.
\end{split}
\end{equation}
Note that $f$, $H_\Sigma$, and $\yY\cdot\nN_\Sigma$ are eigenfunctions corresponding to different eigenvalues of $L_\Sigma$. As $\Sigma$ is asymptotically conical and $f$ satisfies \eqref{EigenBndEqn} and \eqref{EigenDerEqn}, it follows from \cite[Corollary 3.10]{CM} that $f$ is orthogonal to both $H_\Sigma$ and $\yY\cdot\nN_\Sigma$ in $L^2_w(\Sigma)$. This implies that the last integration in the right hand side of \eqref{2ndVarEqn} is zero and so, at $s=0$, 
\begin{equation}
\begin{split}
\partial_{ss} G(s\yY,1+as,s) & =(4\pi)^{-\frac{n}{2}} \int_{\Sigma} \left(-f L_\Sigma f-a^2H^2_\Sigma-\frac{(\yY\cdot\nN_\Sigma)^2}{2}\right) e^{-\frac{|\xX|^2}{4}} d\mathcal{H}^n\\
& \leq \mu (4\pi)^{-\frac{n}{2}} \int_{\Sigma} f^2 e^{-\frac{|\xX|^2}{4}} d\mathcal{H}^n<0.
\end{split}
\end{equation}
Hence, for any $b\in\mathbb{R}\setminus\{0\}$, the second derivative $\partial_{ss}|_{s=0}G(s\yY,1+sa,bs)<0$. Finally, appealing to \cite[Theorem 4.14]{CM} again,
\begin{equation}
\partial_{ss}\vert_{s=0} G(s\yY,1+sa,0)=(4\pi)^{-\frac{n}{2}} \int_{\Sigma} \left(-a^2H^2_\Sigma-\frac{(\yY\cdot\nN_\Sigma)^2}{2}\right) e^{-\frac{|\xX|^2}{4}} d\mathcal{H}^n<0,
\end{equation}
where the inequality follows from the fact that $\Sigma\in\mathcal{ACS}^*_n$ and so $\Sigma$ can neither split off a line isometrically (and so is non-flat) nor can $H_\Sigma$ cannot vanish identically. Hence, we conclude that the Hessian of $G$ at $(\OO,1,0)$ is negative definite as claimed. In particular, there is an $\epsilon_3\in (0,\epsilon_1)$ so that 
\begin{equation} \label{LocalMaxEqn}
G(\xX_0,t_0,s)<G(\OO,1,0)\quad\mbox{if $0<|\xX_0|^2+(\log t_0)^2+s^2<\epsilon_3^2$}.
\end{equation}

Next, \cite[Lemma 7.10]{CM} implies that $\lambda [\Sigma]=G(\OO,1,0)$, and there is an $\alpha_0>0$ so that
\begin{equation} \label{GlobalMaxEqn}
G(\xX_0,t_0,0)<G(\OO,1,0)-\alpha_0 \quad\mbox{if $|\xX_0|^2+(\log t_0)^2>\frac{\epsilon_3^2}{4}$}, 
\end{equation}
which gives Item \eqref{DEPii}.

As $\lambda [\Sigma]<\infty$, $\Sigma$ has bounded area ratios. Since $\Sigma$ has uniformly bounded second fundamental form, combining with \eqref{EigenBndEqn} and \eqref{EigenDerEqn}, it follows from \cite[(2.34)]{WaRigid} that
\begin{equation} \label{AreaEqn}
\sup_{|s|\leq\epsilon_1}\sup_{\xX\in\mathbb{R}^{n+1},r>1} r^{-n}\mathcal{H}^n(\Gamma^s\cap B_r(\xX))<C(\Sigma,C_0,C_1).
\end{equation}
Furthermore, as $\Sigma$ is asymptotically conical, by \eqref{EigenBndEqn} and \eqref{EigenDerEqn}, there is a $C_H$ depending only on $C_0,C_1,C_2$, and $\Sigma$ so that
\begin{equation} \label{MeanCurvBnd}
\sup_{|s|\leq\epsilon_1}\sup_{\Gamma^s} H_{\Gamma^s}^2 \leq C_H.
\end{equation}
Then, using the first variation formula \cite[Lemma 3.1]{CM}, we argue as in \cite[(7.5)]{CM} to get that
\begin{equation} 
\partial_{t_0} G(\xX_0,t_0,s)\geq -(4\pi t_0)^{-\frac{n}{2}}\int_{\Gamma^s}\frac{H^2_{\Gamma^s}}{4} e^{-\frac{|\xX-\xX_0|^2}{4t_0}} d\mathcal{H}^n\geq -\frac{C_H}{4} G(\xX_0,t_0,s).
\end{equation}
Thus, for $0<t_0\leq\bar{t}$,
\begin{equation} \label{1stVartEqn}
G(\xX_0,t_0,s)\leq e^{\frac{C_H\bar{t}}{4}} G(\xX_0,\bar{t},s).
\end{equation} 
In particular, choosing $\bar{t}=1$, this together with \eqref{AreaEqn} implies that 
\begin{equation} \label{AreaRatioEqn}
\sup_{|s|\leq\epsilon_1}\sup_{\xX\in\mathbb{R}^{n+1},r>0} r^{-n}\mathcal{H}^n(\Gamma^s\cap B_r(\xX))<C_M,
\end{equation}
where $C_M$ depends only on $n$, $C$, and $C_H$. 

An easy consequence of \eqref{AreaRatioEqn} is that given $\alpha_1>0$ small, there is an $R>0$ so that 
\begin{equation} \label{UniformSmallEqn}
\sup_{|s|\leq\epsilon_1} \sup_{\xX_0,t_0} (4\pi t_0)^{-\frac{n}{2}} \int_{\Gamma^s\setminus B_{R\sqrt{t_0}}(\xX_0)} \frac{|\xX-\xX_0|}{\sqrt{t_0}} e^{-\frac{|\xX-\xX_0|^2}{4t_0}} d\mathcal{H}^n<\frac{\alpha_1}{4}.
\end{equation}
Invoking the first variation formula again, we get that
\begin{equation} \label{1stVarsEqn}
\partial_s G(\xX_0,t_0,s)=(4\pi t_0)^{-\frac{n}{2}} \int_{\Gamma^s} f_{\Gamma^s} \left(H_{\Gamma^s}-\frac{(\xX-\xX_0)\cdot\nN_{\Gamma^s}}{2t_0}\right) e^{-\frac{|\xX-\xX_0|^2}{4t_0}} d\mathcal{H}^n.
\end{equation}
Let $\mathcal{K}$ be any compact set in $\mathbb{R}^{n+1}\times\mathbb{R}^+\times [-\epsilon_1,\epsilon_1]$. There is an $\bar{R}$ so that $B_{\bar{R}}$ contains $B_{R\sqrt{t_0}}(\xX_0)$ for every $(\xX_0,t_0,s)\in\mathcal{K}$. Thus, with \eqref{EigenBndEqn} and \eqref{MeanCurvBnd}, it follows from \eqref{UniformSmallEqn} that the integration in \eqref{1stVarsEqn} restricted on $\mathbb{R}^{n+1} \setminus B_{\bar{R}}$ will be uniformly bounded on $\mathcal{K}$. Clearly, the integration in \eqref{1stVarsEqn} restricted on $B_{2\bar{R}}$ is continuous in all three variables $\xX_0$, $t_0$, and $s$. Hence we conclude that $\partial_s G$ is uniformly bounded on $\mathcal{K}$, proving Item \eqref{DEPiii}.

To see Item \eqref{DEPiv}, observe that, by Item \eqref{PISPii} of Proposition \ref{PerturbedInitialSurfaceProp}, for any given $\alpha_1>0$ small, there is an $R^\prime>1$ so that if $|\xX_0|>R^\prime$, $0<t_0 \leq 1$, and $|s|\leq\epsilon_1$, then either $(\Gamma^s-\xX_0)/\sqrt{t_0}$ is contained outside $B_R$, or $(\Gamma^s-\xX_0)/\sqrt{t_0}$ in $B_{2R}$ is a connected small $C^1$ graph over some hyperplane. This, together with \eqref{UniformSmallEqn}, implies that $G(\xX_0,t_0,s)<1+\alpha_1/2$. 

Next we study the region where $t_0>1$ and $|\xX_0|$ sufficiently large. Set $\rho=1/\sqrt{t_0}<1$ and $\yY_0=\xX_0/\sqrt{t_0}$. By \eqref{AreaRatioEqn}, given $\alpha_1>0$ small, there is an $r_1>0$ small so that for all $\yY_0\in\mathbb{R}^{n+1}$, $\rho<1$, and $|s|\leq\epsilon_1$,
\begin{equation} \label{UniformSmall1Eqn}
(4\pi)^{-\frac{n}{2}} \int_{\rho\Gamma^s\cap B_{r_1}} e^{-\frac{|\xX-\yY_0|^2}{4}} d\mathcal{H}^n<\frac{\alpha_1}{4}.
\end{equation}
Observe that $\rho\Gamma^s$ is the normal graph of the function $s\rho f(\rho^{-1}\cdot)$ on $\rho\Sigma$. As $\rho\Sigma\to\mathcal{C}(\Sigma)$ in $C^\infty_{loc}(\mathbb{R}^{n+1}\setminus\set{\OO})$ as $\rho\to 0$ and the link $\mathcal{L}(\Sigma)$ is smooth embedded, it follows from \eqref{EigenBndEqn} and \eqref{EigenDerEqn} that there is a $C_A(r_1)$ so that
\begin{equation}
\sup_{|s|\leq\epsilon_1, \rho<1}\sup_{\rho\Gamma^s\setminus B_{r_1}} \abs{A_{\rho\Gamma^s}}<C_A(r_1).
\end{equation}
Furthermore, together with \eqref{AreaRatioEqn}, \eqref{UniformSmallEqn} and \eqref{UniformSmall1Eqn}, we get that, if $\yY_0\in\mathbb{R}^{n+1}$, $\rho<1$, and $|s|\leq\epsilon_1$, then
\begin{equation} \label{TimeNotSmallEqn}
\begin{split}
F[\rho\Gamma^s-\yY_0] & \leq F[\rho\Sigma-\yY_0]+\frac{\alpha_1}{2}+(4\pi)^{-\frac{n}{2}}\int_{\rho\Gamma^s\cap\left(B_R(\yY_0)\setminus B_{r_1}\right)} e^{-\frac{|\xX-\yY_0|^2}{4}} d\mathcal{H}^n \\
& -(4\pi)^{-\frac{n}{2}}\int_{\rho\Sigma\cap\left(B_R(\yY_0)\setminus B_{r_1}\right)} e^{-\frac{|\xX-\yY_0|^2}{4}} d\mathcal{H}^n \\
& \leq F[\rho\Sigma-\yY_0]+\frac{\alpha_1}{2}+C_E,
\end{split}
\end{equation}
where $C_E$ depends on $C_0, C_1, \epsilon_1, r_1,C_M$, and $C_A$ with $C_E\to 0$ as $\epsilon_1\to 0$. 

Now let us choose $\alpha_1=\min\set{\alpha_0/4,2(\lambda [\Sigma]-1)/5}$ and possibly shrink $\epsilon_1$ so that $C_E<\alpha_0/4$. Hence, by \eqref{GlobalMaxEqn}, we conclude that for all $|\xX_0|>\bar{R}^{\prime}=\max\set{R^\prime,\frac{\epsilon_3}{2}}$
\begin{equation} \label{SpaceLargeEqn}
G(\xX_0,t_0,s)<\lambda [\Sigma]-\frac{\alpha_0}{2}.
\end{equation}
  
Finally, in view of \eqref{TimeNotSmallEqn}, we only need to deal with the case that $t_0$ approaches to $0$ and $|\xX_0|\leq\bar{R}^\prime$ so to complete the proof of Item \eqref{DEPv}. We choose $\bar{t}$ sufficiently small so that
\begin{equation}
e^{\frac{C_H\bar{t}}{4}} \left(\lambda[\Sigma]-\alpha_0\right)<\lambda [\Sigma]-\frac{\alpha_0}{2}.
\end{equation}
Thus, further shrinking $\epsilon_1$ if necessary, it follows from \eqref{1stVartEqn}, \eqref{GlobalMaxEqn} and Item \eqref{DEPiii} that for all $0<t_0<\bar{t}$, $|\xX_0|\leq\bar{R}^\prime$, and $|s|\leq\epsilon_1$, we have that $G(\xX_0,t_0,s)<\lambda [\Sigma]-\alpha_0/4$.
\end{proof}

\begin{acknowledgement*}
We would like to thank Detang Zhou for many constructive comments, in particular for explaining to us an alternative proof of the half-space theorem for self-shrinkers and the Rellich-Kondrachov type theorem of weighted Sobolev spaces. 
\end{acknowledgement*}

\end{document}